\documentclass[9pt,dvipsnames]{amsart}
\usepackage{amsmath, amsthm, amscd, amsfonts,graphicx,mathtools}
\usepackage{amsmath,amsthm,amsthm, amscd, amsfonts, stackrel, latexsym,amssymb,mathrsfs,textcomp,wasysym,hyperref}
\usepackage{enumitem}
\usepackage{tikz}
\usepackage{tikz-cd}
\usetikzlibrary{matrix,arrows}
\usepackage[all]{xy}

\usepackage{color} 
\definecolor{slateblue}{rgb}{0,0.0,0.8}

\usepackage{xcolor}
\usepackage{graphicx}
\hypersetup{%
	colorlinks=true,
	linkcolor=slateblue,
	linkbordercolor=gray,
	citecolor=slateblue
}

\theoremstyle{definition}
\newtheorem{definition}{Definition}[section]
\newtheorem{example}[definition]{Example}
\theoremstyle{remark}
\newtheorem{remark}[definition]{Remark}
 
\theoremstyle{plain}
\newtheorem{theorem}[definition]{Theorem}
\newtheorem{lemma}[definition]{Lemma}
\newtheorem{proposition}[definition]{Proposition}
\newtheorem{corollary}[definition]{Corollary}
\newtheorem{notation}[definition]{Notation}

\makeatletter
\@namedef{subjclassname@2020}{\textup{2020} Mathematics Subject Classification}
\makeatother

\usepackage{amsthm}

\newtheorem{innercustomgeneric}{\customgenericname}
\providecommand{\customgenericname}{}
\newcommand{\newcustomtheorem}[2]{%
	\newenvironment{#1}[1]
	{%
		\renewcommand\customgenericname{#2}%
		\renewcommand\theinnercustomgeneric{##1}%
		\innercustomgeneric
	}
	{\endinnercustomgeneric}
}

\newcustomtheorem{customthm}{Theorem}
\newcustomtheorem{customlemma}{Lemma}

\DeclareMathOperator{\Sym}{Sym}

\title[Diffeological Tangent Spaces and Distributional Linearization for Lifted Euler--Reynolds Limits]{Diffeological Tangent Spaces and Distributional Linearization for Lifted Euler--Reynolds Limits}
\author{Alireza Ahmadi}
\address{Alireza Ahmadi, Department of Mathematical Sciences, Yazd University, 89195--741, Yazd, Iran}
\email{ahmadi@stu.yazd.ac.ir}

\author{Jean-Pierre Magnot}
\address{Jean-Pierre Magnot,  CNRS, LAREMA, SFR MATHSTIC, F-49000 Angers, France \\ and \\  Lyc\'ee Jeanne d'Arc,  Avenue de Grande Bretagne,  63000 Clermont-Ferrand, France}

\email{jean-pierr.magnot@ac-clermont.fr}

\author{Bijan Davvaz}
\address{Bijan Davvaz, Department of Mathematical Sciences, Yazd University, 89195--741, Yazd, Iran}
\email{davvaz@yazd.ac.ir}

\subjclass[2020]{Primary 35Q31; Secondary 35D30, 35B99, 57P99.}
\keywords{diffeology; internal tangent space; Euler equations; Euler--Reynolds system; convex integration; relaxation.}
 
\begin{document}

\maketitle
\begin{abstract}
	We develop a diffeological framework for the geometry of
	Euler--Reynolds subsolutions of the incompressible Euler equations.
	Passing to a lifted formulation in which the velocity, quadratic flux,
	and trace-free Reynolds stress are treated as independent variables, we
	construct a diffeological limit space obtained as the weak closure of
	smooth strict subsolutions. Its internal tangent spaces provide an
	intrinsic notion of infinitesimal deformation despite the absence of any
	underlying manifold structure.
	We prove that ambient realizations of internal tangent vectors satisfy
	the linearized Euler--Reynolds equations in the sense of distributions,
	thereby establishing a natural first-order theory for lifted weak limit
	spaces. We further describe the tangent directions compatible with
	the Euler locus and identify the kernels of the natural velocity, flux,
	and full-state observables. These results distinguish observable
	perturbations from hidden stress-gauge directions that encode
	infinitesimal variations of the Reynolds stress.
	Finally, we introduce a finite-mixture model for lifted
	Euler--Reynolds states whose differential realizes explicit tangent
	directions and relates Reynolds stress to infinitesimal phase splitting.
	Under a genericity assumption, every deviatoric stress tensor is
	realized by such a first-order mixture defect, yielding
	phase-counting bounds and a minimality result for the observable
	hierarchy.
\end{abstract}

\tableofcontents

\section{Introduction}
\label{sec:introduction}

The incompressible Euler equations occupy a central position in fluid
mechanics and the modern theory of convex integration. One of the key
insights underlying convex integration is that exact solutions are most
effectively studied through the larger class of
\emph{Euler--Reynolds subsolutions}. These are triples
$(v,p,R)$ satisfying
\begin{equation}
	\label{eq:intro-ER}
	\partial_t v+\operatorname{div}(v\otimes v)+\nabla p
	=-\operatorname{div}R,
	\qquad
	\operatorname{div}v=0,
\end{equation}
where the symmetric tensor $R$ is the Reynolds stress. The tensor
$R$ measures the failure of $(v,p)$ to satisfy the Euler equations
exactly and serves as the defect corrected during successive convex
integration iterations
\cite{DeLellisSzekelyhidi2009,DeLellisSzekelyhidi2010,
	DeLellisSzekelyhidi2013,BuckmasterDeLellisSzekelyhidi2016,
	Isett2018,DaneriSzekelyhidi2017}.

A convex integration scheme generates a sequence of Euler--Reynolds
subsolutions whose Reynolds stresses converge to zero while the velocity
fields develop increasingly fine oscillations. Although one typically
has
$v_\alpha\rightharpoonup v,
R_\alpha\to0,$
the nonlinear fluxes satisfy
$v_\alpha\otimes v_\alpha
\not\rightharpoonup
v\otimes v.$
The resulting oscillation defect is the mechanism behind both
nonuniqueness and anomalous dissipation, and naturally leads to the
measure-valued descriptions of weak limits developed by Young, Tartar,
Ball, and many others
\cite{Young1969,Tartar1979,Tartar1983,Ball1989,
	KinderlehrerPedregal1991,Pedregal1997,
	Wiedemann2011,Choffrut2013}.

The present work is motivated by the geometry of these spaces of weak
and approximate solutions. Unlike classical configuration spaces,
Euler--Reynolds subsolutions do not form a smooth manifold. Their
defining constraints are distributional, weak limits need not satisfy
the nonlinear constitutive relation, and there is therefore no canonical
notion of tangent vector from which one may derive a first-order theory.
Consequently, even the basic question of what it means to perturb an
Euler--Reynolds state infinitesimally is far from obvious.

Our starting point is to separate the linear and nonlinear structures of
the Euler equations. Instead of treating the quadratic flux
$v\otimes v$ as part of the differential equation, we introduce it as an
independent variable and consider lifted triples $(v,Q,R)$ satisfying
\begin{equation}
	\label{eq:intro-lifted}
	\partial_t v+\operatorname{div}Q+\nabla p
	=-\operatorname{div}R,
	\qquad
	\operatorname{div}v=0.
\end{equation}
The differential constraint is then completely linear, while all
nonlinearity is concentrated in the algebraic relation
$Q=v\otimes v,$
which defines the Euler locus (Definition~\ref{def:euler-locus}). Weak limits naturally leave this
nonlinear locus while remaining within the ambient linear space,
making the lifted formulation particularly well suited for studying
Euler--Reynolds limits arising from convex integration.

The principal contribution of this paper is to endow lifted
Euler--Reynolds limit spaces with an intrinsic first-order geometry.
To realize this program we work in the category of diffeological spaces
introduced by Souriau and developed systematically by
Iglesias-Zemmour
\cite{JMS,Iglesias2013}. Diffeological spaces generalize smooth
manifolds by replacing coordinate charts with families of compatible
smooth parametrizations, called plots. Every subset inherits a natural
subspace diffeology, smooth maps are defined intrinsically, and internal
tangent spaces exist without requiring any manifold structure \cite{ChristensenWu2016}.
Recent developments have shown that diffeological techniques are
particularly effective for infinite-dimensional spaces and spaces
defined by weak analytical constraints
\cite{Magnot2020,AM2023,ADM2026}.
Therefore, diffeology provides a natural setting for spaces defined
through weak convergence, distributional constraints, or nonlinear
compatibility conditions, where classical manifold techniques are often
inapplicable.

The results of this paper develop the preceding geometric program in
five stages. We first establish the tangent theory for Euler solution
spaces, then extend it to lifted Euler--Reynolds limit spaces, identify
their observable and hidden infinitesimal directions, and finally
develop a finite-dimensional realization through smooth mixtures.

Our first objective is to identify the infinitesimal geometry of spaces
of Euler solutions. We begin with smooth solutions and prove that every
internal tangent vector satisfies the classical linearized Euler
equations (Proposition~\ref{prop:lin-euler-smooth}). We then extend this to weak solutions,
showing that internal tangent directions satisfy the linearized system in the sense of
distributions (Theorem~\ref{thm:weak-distributional-linearization}). Finally, we consider energy-constrained strata and
identify the tangent directions compatible simultaneously with the
linearized dynamics and the prescribed kinetic-energy constraint (Theorem~\ref{thm:admissible}). These
results demonstrate that internal tangent spaces recover the expected
first-order Euler dynamics.

We next investigate the geometry of Euler--Reynolds subsolutions. For
smooth subsolutions with prescribed initial data, we prove that the ambient realization of the internal
tangent space coincides with the full solution space of the linearized
Euler--Reynolds system
(Proposition~\ref{prop:linearized-ER-vpr}).
This stratum possesses an infinite-dimensional family of tangent
directions that modify the Reynolds stress while leaving the velocity
and pressure unchanged
(Corollary~\ref{cor:smooth-stress-gauge}),
revealing a substantial geometric flexibility absent from the exact
Euler equations. In contrast, the zero-stress Euler locus is rigid
(Remark~\ref{rem:euler-locus-rigidity}), motivating the study of weak closures within the lifted
framework.

The central geometric construction of the paper is the lifted limit
space
$\mathbf X(v_0,\mathcal A)$,
defined as the weak closure of smooth strict subsolutions inside the
ambient convenient vector space
$\mathbf E$.
The convenient vector space structure naturally identifies the internal
tangent space of $\mathbf E$ with $\mathbf E$ itself
(see \cite[Theorem~4.5]{ADM2026}), while the inherited subspace
diffeology endows
$\mathbf X(v_0,\mathcal A)$
with intrinsic internal tangent spaces. The inclusion into the ambient
space induces an ambient realization map that associates concrete
distributional perturbations with intrinsic tangent vectors. We prove
that every such realization satisfies the linearized
Euler--Reynolds equations, thereby establishing an intrinsic
distributional linearization for weak lifted limit spaces
(Propositions~\ref{prop:weak-lifted-er}
and~\ref{prop:lin-lifted-er}).
We also describe those tangent directions that remain on the Euler
locus (Proposition~\ref{prop:tangent-euler-locus}).
This distinguishes infinitesimal deformations preserving exact Euler
states from those generating Reynolds stress.

A second theme of the paper concerns observability.
Different physical measurements detect different classes of
infinitesimal perturbations, leading naturally to a hierarchy of
observable tangent directions.
For the velocity, velocity--flux, and full lifted-state observables, we
characterize the corresponding tangent kernels and identify the hidden
directions that remain invisible to each level of observation (Theorem~\ref{thm:kernel-characterization}).
Among these are the stress-gauge directions, whose velocity and flux are
undetectable while their Reynolds-stress component represents a genuine
first-order deformation (Theorem~\ref{thm:stress-gauge-characterization}). 
Their stress perturbations are absorbed into the pressure through
distributional potentials, and explicit smooth representatives are
constructed 
(Proposition~\ref{prop:explicit-stress-gauge-tensors}).
This observable hierarchy provides a geometric
interpretation of which infinitesimal variations are physically visible
and which remain concealed within the lifted formulation.

The final part of the paper develops a finite-dimensional model that
illustrates the abstract tangent theory.
Finite mixtures of lifted Euler--Reynolds states define smooth maps in
the lifted limit space whose differentials explicitly realize internal
tangent directions.
Within this framework, Reynolds stress appears as the covariance defect
generated by infinitesimal phase splitting.
Under a genericity assumption, every deviatoric stress tensor is
obtained in this manner, yielding phase-counting bounds together
with a minimality result for the observable hierarchy (Theorems~\ref{thm:deviatoric-second-moment-realization}, \ref{thm:minimal-observables}).

\paragraph{\textbf{Related works.}}

The Euler--Reynolds formulation has become the fundamental framework for
modern convex integration in incompressible fluid dynamics. Beginning
with the pioneering work of
De Lellis and Sz\'ekelyhidi
\cite{DeLellisSzekelyhidi2009,DeLellisSzekelyhidi2010,DeLellisSzekelyhidi2013},
Euler--Reynolds subsolutions provide the starting point for iterative
constructions of weak Euler solutions, culminating in the resolution of
Onsager's conjecture
\cite{BuckmasterDeLellisSzekelyhidi2016,Isett2018,DaneriSzekelyhidi2017}.
These developments reveal the central role of Reynolds stress as the
carrier of oscillation defects generated by convex integration.

Weak limits of oscillatory sequences are classically described by
Young measures and related measure-valued frameworks
\cite{Young1969,Tartar1979,Tartar1983,Ball1989,
	KinderlehrerPedregal1991,Pedregal1997}.
For the Euler equations, measure-valued solutions and
Euler--Reynolds subsolutions provide complementary descriptions of
oscillation and concentration phenomena
\cite{Wiedemann2011,Choffrut2013}.
Our approach is compatible with this perspective but shifts the emphasis
from the analysis of weak limits to the differential geometry of the
spaces in which those limits reside.

Unlike previous studies of Euler--Reynolds subsolutions, which focus
primarily on existence, relaxation, or convex integration schemes, this
paper studies the geometry of the solution space itself.
The lifted formulation together with its internal tangent spaces yields
an intrinsic first-order description of weak Euler--Reynolds limits,
providing a unified framework for distributional linearization,
observable tangent structures, and finite-dimensional mixture
realizations.

\paragraph{\textbf{Organization.}}
Section~\ref{sec:diffeology-primer} reviews the necessary background on
diffeological spaces, internal tangent spaces, and the canonical
diffeology of the ambient function spaces used throughout the paper.
Section~\ref{sec:tangent-strata} develops the tangent theory for
Euler solution spaces. We first study smooth solutions, then establish
distributional linearization for weak solutions, and finally analyze
energy-constrained strata.
Section~\ref{sec:lifted-limit-space} introduces the lifted
Euler--Reynolds framework, studies the geometry of the Euler locus,
constructs the lifted limit space
$\mathbf X(v_0,\mathcal A)$, and characterizes its observable kernels
and stress-gauge directions.
Finally,
Section~\ref{sec:mixtures}
develops the finite-mixture model, proves the realization theorem for
deviatoric stresses, and derives phase-counting and minimality
results.

\section{Diffeological preliminaries} \label{sec:diffeology-primer}
	We recall the diffeological notions used throughout the paper: subspace and product diffeologies, internal tangent spaces, and the diffeologies on locally convex function spaces. Standard references are \cite{Iglesias2013,ChristensenWu2016,HM-V,KM1997,ADM2026}.

	\subsection{Basic notions}
		A \emph{domain} is any open subset of a Euclidean space $\mathbb{R}^n$, for some non-negative integer $n$, endowed with its standard topology. Any map from a domain to a set $X$ is called a \emph{parametrization} in $X$. A parametrization $ P $ in $ X $ with $ 0\in\mathrm{dom}(P) $ and $ P(0)=x $ is called a parametrization \emph{centered} at $ x $.
	
	\begin{definition}
		A \emph{diffeological space} $(X,\mathfrak{D})$ consists of a set $X$ together with a diffeology $\mathfrak{D}$, which is a collection of parametrizations in $X$, called \emph{plots}, satisfying the following axioms:
		\begin{enumerate}
			\item[\textbf{D1.}]
			The union of the images of all plots is $X$.
			
			\item[\textbf{D2.}]
			If $P:U\to X$ is a plot and $F:V\to U$ is a smooth map between domains, then $P\circ F$ is also a plot.
			
			\item[\textbf{D3.}]
			If $P:U\to X$ is a parametrization such that for every point $r\in U$ there exists an open neighborhood $V\subseteq U$ of $r$ with $P|_V$ a plot, then $P$ is a plot.
		\end{enumerate}
		When the diffeology is clear from the context, we simply write $X$ instead of $(X,\mathfrak{D})$.
	\end{definition}
	
	\begin{example}
		Any smooth manifold carries a natural diffeology whose plots are usual smooth parametrizations. In particular, every domain is naturally a diffeological space.
	\end{example}
	
	\begin{example}
		Let $X$ be a topological space.
		The collection of all continuous parametrizations in $X$ defines a diffeology on $X$, called the \emph{continuous diffeology}.
	\end{example}
	
	\begin{example}
		Let $X$ be any set. The collection of all locally constant parametrizations in $X$ forms a diffeology on $X$, called the \emph{discrete diffeology}. The collection of all parametrizations in $X$ also constitutes a diffeology on $X$, called the \emph{indiscrete diffeology}.
	\end{example}
 
	\begin{definition}\label{def:smooth-maps}
		Let $X$ and $Y$ be diffeological spaces. A map $f:X\to Y$ is said to be \emph{smooth} if for every plot $P$ in $X$, the composition $f\circ P$ is a plot in $Y$.  
	\end{definition}
	
	Indeed, the smooth parametrizations into a diffeological space in the sense of Definition \ref{def:smooth-maps} are precisely the plots. In particular, smooth curves and $1$-plots are the same.
	
	\begin{notation}
		For diffeological spaces $X$ and $Y$, we write $C^\infty(X,Y)$ for the set of all smooth maps from $X$ to $Y$. In particular, we write $C^{\infty}(X)$ for $C^{\infty}(X,\mathbb{R})$.
	\end{notation}
	
	\begin{definition}
		Let $X$ be a diffeological space. A \emph{diffeological subspace} of $X$ is a subset $X'\subseteq X$ endowed with the \emph{subspace diffeology}, whose plots are exactly the plots in $X$ with values in $X'$.
	\end{definition}
	 
	\begin{definition}
		Let $\{X_i\}_{i\in J}$ be a family of diffeological spaces. 
		The \emph{product diffeology} on
		$ X=\prod_{i\in J} X_i$ is given by the parametrizations $ P $ in $ X $ for which
		$ \pi_i\circ P$ is a plot in $X_i$ for all $ i\in J $, where
		$ \pi_i:X\rightarrow X_i$ is the natural projection.
		In the case where $J=\{1,\dots,n\}$ is finite, a parametrization
		$P:U\to X_1\times\cdots\times X_n$
		is a plot if and only if it is of the form
		$P=(P_1,\dots,P_n),$
		where each $P_i:U\to X_i$ is a plot.
	\end{definition}

	Every diffeological space $X$ carries a natural topology, called the D-\emph{topology}, in which a subset of $X$ is D-\emph{open} if and only if its preimage under every plot is open.
	By \cite[\S 2.9]{Iglesias2013}, every smooth map is continuous with respect to the D-topology.
	
	\subsection{Internal tangent spaces}
	For diffeological spaces, several notions of tangent space appear in the
	literature; see \cite{ChristensenWu2016} for further details. We recall
	the construction of internal tangent spaces, following \cite{ChristensenWu2016,HM-V}.

	Let $X$ be a diffeological space and $x\in X$. The \emph{category of germs of plots centered at $x$}, denoted $\mathcal{G}\mathsf{Plots}_x(X)$, has as objects all plots centered at $x$, and a morphism $Q\stackrel{\mathcal{G}_x(F)}{\longrightarrow} P$ between two such plots $P:U\rightarrow X$ and $Q:V\rightarrow X$ centered at $x$ is the germ class of a smooth map $F:W\rightarrow U$, defined on an open neighborhood $W\subseteq V$ of $0$, satisfying $F(0)=0$ and $Q|_W=P\circ F$. Two such maps define the same germ if they coincide on some open neighborhood of $0$ in $V$.
	
	\begin{definition}\label{def:internal-tangent-space}
		The \emph{internal tangent space} $T_xX$ to $X$ at $x$ is the colimit of the functor
		$T_0:\mathcal{G}\mathsf{Plots}_x(X)\rightarrow \mathsf{Vect}$
		from $\mathcal{G}\mathsf{Plots}_x(X)$ to the category of vector spaces and linear maps,
		given by
		\begin{center}
			$Q\stackrel{\mathcal{G}_x(F)}{\longrightarrow} P\qquad\longmapsto\qquad
			dF_0:T_0V\longrightarrow T_0U$,
		\end{center}
		where $dF_0$ is the usual differential of $F$ at $0$.
		The colimit construction provides the following cocone:
		$$
		\xymatrix{
			& T_x X  & \\
			T_0 V  \ar[ur]^{dQ_0} \ar[rr]_{dF_0} & & T_0U  \ar[ul]_{dP_0}
		}
		$$
		where $dP_0:T_0U\rightarrow T_xX$ denotes the canonical linear map into the colimit corresponding to the plot $P$, and similarly for $dQ_0:T_0V\rightarrow T_xX$.
	\end{definition}
	 
	\begin{notation}\label{not:curve-velocity}
		For a smooth curve $\gamma$ in $X$, $\left.\frac{d}{dt}\right|_{t=0}\gamma$ denotes the element $d\gamma_0\!\left(\tfrac{d}{dt}\right)$, where $\tfrac{d}{dt}$ is the standard basis vector of $T_0\mathbb{R}\cong\mathbb{R}$, as in \cite{ChristensenWu2016}.
	\end{notation}
	\begin{remark}\label{rem:curve-representation-finite-sums}
	By
	\cite[Proposition~3.3]{ChristensenWu2016}, every internal tangent vector of $T_xX$ is
	a finite linear combination of representable tangent vectors, i.e. tangent
	vectors of the form
	$\left.\frac{d}{dt}\right|_{t=0}\gamma$
	for $1$-plots 
	$\gamma$ in $X$ centered at $x$.
	\end{remark}
	\begin{definition}\label{def:tangent-map}
		Let $f:X\rightarrow Y$ be a smooth map and $x\in X$. The universal property of colimits induces a unique linear map
		$df_x:T_xX\rightarrow T_{f(x)}Y$, called the \emph{internal tangent map} of $f$ at $x$, characterized by
		$df_x\circ dP_0= d(f\circ P)_0$
		for all plots $P$ in $X$ centered at $x$.
	\end{definition}
	
	This construction yields a functor from the category of pointed diffeological spaces to the category of vector spaces (see \cite[p.~11]{ChristensenWu2016}). In particular, one obtains the standard chain rule for internal tangent spaces.
	
	\begin{proposition}\label{prop:chain-rule}
		Let $f:X\rightarrow Y$ and $g:Y\rightarrow Z$ be smooth maps, and $x\in X$.
		\begin{enumerate}
			\item[(1)] $d(\mathrm{id}_X)_x=\mathrm{id}_{T_xX}$.
			\item[(2)] $d(g\circ f)_x=dg_{f(x)}\circ df_x$.
			\item[(3)] If $f$ is a diffeomorphism, then $df_x$ is an isomorphism and $(df_x)^{-1}=d(f^{-1})_{f(x)}$.
		\end{enumerate}
	\end{proposition}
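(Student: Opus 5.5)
The three assertions follow from the universal property of the colimit in Definition~\ref{def:internal-tangent-space}, in the form recorded in Definition~\ref{def:tangent-map}: a linear map out of $T_xX$ is determined by its values on the images $dP_0(T_0U)$ of all plots $P$ centered at $x$. This is because the canonical maps $dP_0$ form a jointly surjective cocone. So in each part I will compare two linear maps on every $dP_0(T_0U)$ and then invoke the uniqueness clause of the colimit.

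For (1), the identity $\mathrm{id}_X$ is smooth, and for every plot $P$ centered at $x$ we have $\mathrm{id}_X\circ P=P$. Hence the defining relation reads $d(\mathrm{id}_X)_x\circ dP_0=dP_0$. The map $\mathrm{id}_{T_xX}$ satisfies the same relation, so uniqueness gives $d(\mathrm{id}_X)_x=\mathrm{id}_{T_xX}$.

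For (2), first note that $g\circ f$ is smooth, since $(g\circ f)\circ P=g\circ(f\circ P)$ and $f\circ P$ is a plot in $Y$ centered at $f(x)$. For each plot $P$ centered at $x$, applying the defining relation twice gives
\[
dg_{f(x)}\circ df_x\circ dP_0=dg_{f(x)}\circ d(f\circ P)_0=d(g\circ f\circ P)_0 .
\]
The first equality uses the relation for $f$ at $x$. The second uses the relation for $g$ at $f(x)$, applied to the plot $f\circ P$. Thus $dg_{f(x)}\circ df_x$ satisfies the characterizing property of $d(g\circ f)_x$, and uniqueness yields equality.

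For (3), I apply (2) to $f^{-1}\circ f=\mathrm{id}_X$ and to $f\circ f^{-1}=\mathrm{id}_Y$, then use (1). This gives $d(f^{-1})_{f(x)}\circ df_x=\mathrm{id}_{T_xX}$ and $df_x\circ d(f^{-1})_{f(x)}=\mathrm{id}_{T_{f(x)}Y}$, so $df_x$ is an isomorphism with the stated inverse.

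The only point needing care is the uniqueness claim underlying every step. I would justify it by noting that the colimit of a $\mathsf{Vect}$-valued functor is a quotient of $\bigoplus_P T_0U_P$. Hence two linear maps out of $T_xX$ agreeing after precomposition with every $dP_0$ coincide. Alternatively, by Remark~\ref{rem:curve-representation-finite-sums}, it suffices to check agreement on curve velocities. Functoriality of $T_0$ on germs, namely the classical chain rule $d(F\circ G)_0=dF_0\circ dG_0$ on domains, is what makes $d(f\circ P)_0$ well defined as a cocone, so that $df_x$ exists at all.
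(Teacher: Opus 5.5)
Your proof is correct and takes the same route as the paper. The paper gives no separate argument and simply cites the functoriality of internal tangent spaces from Christensen--Wu. Your proof is exactly that functoriality argument: each identity follows from the uniqueness clause of the colimit's universal property applied to the cocone $\{dP_0\}$.

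One wording point: the images $dP_0(T_0U)$ span $T_xX$ rather than cover it, so "jointly surjective" should be read as "spanning". Also, uniqueness of the mediating map is already part of the universal property, so the direct-sum description is not needed.
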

	\begin{proposition}[{\cite[Proposition 2.42]{ARA}}]\label{prop:const-vanish}
		If $ f:X\rightarrow Y  $ is locally constant and $ x\in X $, then
		$ df_x=0 $.
	\end{proposition}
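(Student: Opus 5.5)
The plan is to reduce to generators of the colimit and then show that a locally constant map sends every centered plot to a constant plot near $0$. Recall from Definition~\ref{def:internal-tangent-space} that $T_xX$ is the colimit of $T_0$ over $\mathcal{G}\mathsf{Plots}_x(X)$. Hence $T_xX$ is spanned by the images $dP_0(T_0U)$ as $P:U\to X$ ranges over plots centered at $x$. By Definition~\ref{def:tangent-map}, $df_x\circ dP_0=d(f\circ P)_0$. So it suffices to show that $d(f\circ P)_0=0$ for every such plot $P$.

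First I fix what ``locally constant'' means: every point of $X$ has a D-open neighborhood on which $f$ is constant. Given a plot $P:U\to X$ centered at $x$, let $O$ be a D-open neighborhood of $x$ on which $f\equiv f(x)$. By the definition of the D-topology, $P^{-1}(O)$ is open in $U$ and contains $0$. Choose a connected open ball $W\subseteq P^{-1}(O)$ around $0$. Then $f\circ P|_W$ is the constant map with value $y:=f(x)$.

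Next I factor this constant plot through the $0$-dimensional domain $\mathbb{R}^0$. Let $c:\mathbb{R}^0\to Y$ be the plot with value $y$; it is a plot by D1 and D2, since some plot passes through $y$ and we may compose it with a constant map. Let $F:W\to\mathbb{R}^0$ be the unique map. Then $(f\circ P)|_W=c\circ F$, and $F(0)=0$. This is a morphism in $\mathcal{G}\mathsf{Plots}_y(Y)$ from the germ of $f\circ P$ to $c$. The cocone relation gives $d(f\circ P)_0=dc_0\circ dF_0$, and $dF_0:T_0W\to T_0\mathbb{R}^0=0$ is the zero map. Therefore $d(f\circ P)_0=0$. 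Note that only the germ of $f\circ P$ at $0$ matters, since morphisms in the germ category are germ classes; so restricting to $W$ is harmless.

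The main subtlety is this germ and restriction step: we must confirm that $(f\circ P)$ and $(f\circ P)|_W$ give the same element in the colimit. This follows directly from the definition of germs. The spanning fact also needs a word, but it is standard for colimits in $\mathsf{Vect}$ (alternatively, Remark~\ref{rem:curve-representation-finite-sums} gives it). Combining these steps, $df_x$ vanishes on a spanning set and is linear, so $df_x=0$.
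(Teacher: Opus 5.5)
Your proof is correct and complete. The paper gives no argument of its own for this statement: it quotes it from \cite[Proposition~2.42]{ARA}, and only ever applies it to maps that are actually constant (e.g.\ $h\circ j$ in Lemma~\ref{lem:velocity}, $\mathcal D\circ\iota$ in Proposition~\ref{prop:lin-euler-smooth}, $\ell_\psi|_{\mathbf X(v_0,\mathcal A)}$ in Proposition~\ref{prop:lin-lifted-er}). So there is no in-text proof to compare against, and your argument serves as a self-contained substitute built directly on Definitions~\ref{def:internal-tangent-space} and~\ref{def:tangent-map}. Another natural route would use the fact that $T_xO\to T_xX$ is an isomorphism for a D-open $O\ni x$, and then factor $f|_O$ through a one-point space, whose internal tangent space is zero. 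Your plot-level argument avoids needing that locality theorem; the price is writing out an explicit germ morphism to the constant plot on $\mathbb R^0$.

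Three small points could be tightened. First, $df_x$ and $d(f\circ P)_0$ are only defined if $f$ is smooth. This is automatic: each $f\circ P$ is locally constant, constant parametrizations are plots by D1--D2 (your argument for $c$), and D3 then makes $f\circ P$ a plot. Second, your D-topology notion of ``locally constant'' loses no generality. It is equivalent to requiring $f\circ P$ to be locally constant for every plot $P$, since then every fibre of $f$ is D-open. Third, the germ step you single out as the main subtlety is already built into the definition of morphisms in $\mathcal G\mathsf{Plots}_y(Y)$. Those morphisms are given by maps defined only on a neighbourhood $W$ of $0$ with $(f\circ P)|_W=c\circ F$, which is exactly what you produce, and connectedness of $W$ is not needed. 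Finally, instead of invoking the spanning property of colimits in $\mathsf{Vect}$, you can conclude by uniqueness in the universal property: both $df_x$ and $0$ satisfy $L\circ dP_0=d(f\circ P)_0$ for every plot $P$ centered at $x$.
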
	

\subsection{Canonical diffeology on ambient spaces}
Throughout, LCTVS abbreviates
Hausdorff locally convex topological vector space, and a LCTVS $E$ is
called \emph{convenient} if it is $c^\infty$-complete, i.e.\ every
Mackey--Cauchy sequence in $E$ converges; equivalently, every smooth curve
$\mathbb R\to E$ admits a smooth antiderivative \cite[Theorem~2.14]{KM1997}.

Our ambient spaces are LCTVSs rather than
finite-dimensional manifolds. We therefore recall their natural diffeological
structure and the role of convenience in identifying internal tangent spaces
with the ambient vector space itself. 

\begin{definition}\label{def:canonical-diffeology}
	Let $E$ be a LCTVS.
	\begin{itemize}
		\item The \emph{canonical diffeology} on $E$ consists of all
		parametrizations $P:U\to E$ such that $\ell\circ P:U\to\mathbb R$ is
		smooth for every continuous linear functional $\ell:E\to\mathbb R$.
		Such a $P$ is called \emph{scalarwise smooth}, or \emph{weakly smooth}.
		
		\item The \emph{$c^\infty$-diffeology} on $E$ consists of all
		parametrizations $P:U\to E$ such that, for every $C^\infty$ curve
		$c:\mathbb R\to U$, the composite $P\circ c:\mathbb R\to E$ is a
		$C^\infty$ curve in the sense of \cite{KM1997}.
	\end{itemize}
\end{definition}

By \cite[Theorem~2.14(4)]{KM1997}, a LCTVS $E$ is convenient if and only if
every scalarwise smooth curve in $E$ is $C^\infty$. This has the following
diffeological reformulation.

\begin{proposition}[{\cite[Proposition~4.2]{ADM2026}}]\label{prop:convenient-diffeology}
	A LCTVS $E$ is convenient if and only if its $c^\infty$-diffeology and its
	canonical diffeology coincide.
\end{proposition}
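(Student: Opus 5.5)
Both directions reduce to comparing the two classes of plots. By definition, a plot for either diffeology is a parametrization $P:U\to E$ from a domain. The comparison rests on the characterization quoted just before the statement, \cite[Theorem~2.14(4)]{KM1997}: $E$ is convenient if and only if every scalarwise smooth curve $\mathbb R\to E$ is a $C^\infty$ curve. The plan is to show that the coincidence of the two diffeologies on $1$-dimensional domains is exactly this criterion, and that the higher-dimensional case follows by testing plots along smooth curves.

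First I would check the inclusion that holds for every LCTVS: the $c^\infty$-diffeology is contained in the canonical diffeology. Let $P$ be a $c^\infty$-plot and $\ell$ a continuous linear functional. For every smooth curve $c$ into $U$, the composite $P\circ c$ is a $C^\infty$ curve in $E$, so $\ell\circ P\circ c$ is smooth, since continuous linear maps preserve smooth curves. By Boman's theorem, a real function on a finite-dimensional domain that is smooth along all smooth curves is smooth. Hence $\ell\circ P$ is smooth, and $P$ is scalarwise smooth.

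For the direction ``convenient $\Rightarrow$ equality'', let $P$ be scalarwise smooth and $c:\mathbb R\to U$ smooth. Then $\ell\circ(P\circ c)=(\ell\circ P)\circ c$ is smooth for every $\ell$, so $P\circ c$ is a scalarwise smooth curve. By convenience it is a $C^\infty$ curve, so $P$ is a $c^\infty$-plot.

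For the converse, assume the two diffeologies coincide and let $\gamma:\mathbb R\to E$ be a scalarwise smooth curve. Then $\gamma$ is a canonical plot, hence a $c^\infty$-plot. Composing with $c=\mathrm{id}_{\mathbb R}$ shows that $\gamma$ is a $C^\infty$ curve, and \cite[Theorem~2.14(4)]{KM1997} gives that $E$ is convenient.

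The main obstacle I expect is the step using Boman's theorem in the inclusion for general $E$. It has to be applied to real-valued functions on open subsets of $\mathbb R^n$, and one must make sure the notion of $C^\infty$ curve in \cite{KM1997} really preserves smoothness under continuous linear functionals. That point is standard, since such curves are in particular weakly smooth.
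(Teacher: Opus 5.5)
Your proof is correct and follows the same route as the paper, which states this proposition as the diffeological reformulation of the Kriegl--Michor criterion ($E$ is convenient if and only if every scalarwise smooth curve is $C^\infty$) and defers the details to \cite{ADM2026}. Your argument supplies exactly those details: Boman's theorem gives the inclusion of the $c^\infty$-diffeology into the canonical one for any LCTVS, and the reverse inclusion, tested along smooth curves into domains, is equivalent to that curve criterion.
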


We use repeatedly the standard fact that a multilinear map between convenient
vector spaces, equipped with their canonical diffeologies, is smooth if and only
if it is bounded; in particular every continuous multilinear map is diffeologically smooth
(see \cite{KM1997,ADM2026}). We also use that a finite product of convenient vector
spaces is convenient and that its canonical diffeology coincides with the
product diffeology \cite[Proposition~3.11]{ADM2026}.

For a LCTVS $E$ and $x\in E$, define the canonical linear map
$$
\Phi_x:E\to T_xE,
\qquad
\Phi_x(v):=\left.\frac{d}{dt}\right|_{t=0}\gamma_v,
\qquad
\gamma_v(t):=x+tv,
$$
so that $\Phi_x(v)$ is the internal tangent vector represented by the affine
line through $x$ with velocity $v$.

\begin{theorem}[{\cite[Theorem~4.5]{ADM2026}}]\label{thm:convenient-tangent}
	Let $E$ be a LCTVS equipped with the canonical diffeology and let $x\in E$.
	Then $E$ is convenient if and only if $\Phi_x:E\to T_xE$ is an isomorphism of
	vector spaces.
\end{theorem}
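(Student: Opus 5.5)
The plan is to prove the two directions of Theorem~\ref{thm:convenient-tangent} separately. Both rest on the concrete description of $T_xE$ as a colimit together with Remark~\ref{rem:curve-representation-finite-sums}.

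\emph{Linearity of $\Phi_x$.} First I would check that $\Phi_x$ is linear. Consider the $2$-plot $P(s,t)=x+sv+tw$, centered at $x$. Composing with the smooth maps $t\mapsto(t,0)$, $t\mapsto(0,t)$ and $t\mapsto(t,t)$ and using the cocone relations gives
\[
\Phi_x(v+w)=dP_0(e_1+e_2)=\Phi_x(v)+\Phi_x(w).
\]
Scaling is handled the same way, with the reparametrization $t\mapsto\lambda t$.

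\emph{Convenient implies isomorphism.} Here I would construct an inverse $\Psi:T_xE\to E$ through the universal property of the colimit. To each plot $P:U\to E$ centered at $x$ I assign $\Psi_P:=$ the derivative of $P$ at $0$, i.e.\ $\Psi_P(h)=\lim_{s\to0}(P(sh)-x)/s$. Convenience is what makes this well defined: by Proposition~\ref{prop:convenient-diffeology}, $P$ is $c^\infty$-smooth, so $s\mapsto P(sh)$ is a $C^\infty$ curve in the sense of \cite{KM1997}. Its derivative therefore exists in $E$, and it is linear in $h$ by the chain rule for $c^\infty$-maps. Compatibility with morphisms, $\Psi_{P\circ F}=\Psi_P\circ dF_0$, is again the convenient chain rule. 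Hence the $\Psi_P$ form a cocone and induce a linear map $\Psi$. Clearly $\Psi\circ\Phi_x=\mathrm{id}_E$, so $\Phi_x$ is injective. For surjectivity, Remark~\ref{rem:curve-representation-finite-sums} reduces us to representable vectors $\frac{d}{dt}|_0\gamma$. I would show $\frac{d}{dt}|_0\gamma=\Phi_x(\gamma'(0))$ by writing $\gamma(t)=x+t\,\eta(t)$, where $\eta(t)=\int_0^1\gamma'(st)\,ds$ is a smooth curve; this integral exists because $E$ is convenient. Then consider the $2$-plot $Q(t,u)=x+t\,\eta(u)$. Pulling $Q$ back along the diagonal $t\mapsto(t,t)$ gives $\gamma$, and along $t\mapsto(t,0)$ gives $\gamma_{\eta(0)}$. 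Moreover, $dQ_0(\partial_u)=0$, because the restriction $u\mapsto Q(0,u)$ is constant (Proposition~\ref{prop:const-vanish}). Therefore $\frac{d}{dt}|_0\gamma=dQ_0(\partial_t)=\Phi_x(\eta(0))=\Phi_x(\gamma'(0))$. I expect this Hadamard-type decomposition to be the main technical step, since it is where completeness is used essentially.

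\emph{Isomorphism implies convenient (contrapositive).} Suppose $E$ is not convenient. By \cite[Theorem~2.14]{KM1997}, there is then a scalarwise smooth curve $c$ with $c(0)=0$ whose weak derivative at $0$ is not realized in $E$, or which admits no antiderivative. I would use such a curve to produce a tangent vector outside the image of $\Phi_x$. Every continuous linear functional $\ell$ induces a linear map $d\ell_x:T_xE\to\mathbb R$ (as $\ell$ is smooth), and hence a map $T_xE\to E^{*\prime}$ (the algebraic dual of $E'$) given by $\xi\mapsto(\ell\mapsto d\ell_x\xi)$. This map composed with $\Phi_x$ is the canonical embedding $E\to E^{*\prime}$. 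The vector $\frac{d}{dt}|_0(x+c)$ is sent to $\ell\mapsto(\ell\circ c)'(0)$. If $c$ is chosen so that this functional is not evaluation at any element of $E$, then $\Phi_x$ is not surjective. The obstacle is choosing $c$ so that the weak derivative at the base point itself lies outside $E$. I would take a Mackey--Cauchy sequence $(x_n)$ without limit and build $c$ as a sum of bumps, so that its difference quotients at $0$ weakly approximate the missing limit, following the proof of \cite[Theorem~2.14]{KM1997}.
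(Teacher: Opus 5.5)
The paper does not prove this statement; it is imported from \cite[Theorem~4.5]{ADM2026}, so there is no in-paper argument to match. Your proposal is correct in outline. Its surjectivity step is the same device the paper uses in the proof of Lemma~\ref{lem:velocity}. There the auxiliary plot is $h(u,v)=x+u\,c'(0)+uv\,R(u)$, built from a second-order remainder, whereas your $Q(t,u)=x+t\,\eta(u)$ needs only a first-order one.

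Two simplifications are worth making. First, the map $\Theta\colon T_xE\to(E')^{*}$, $\Theta(\xi)(\ell):=d\ell_x(\xi)$, which you introduce only for the converse, satisfies $\Theta(\Phi_x(v))=(\ell\mapsto\ell(v))$. Hence $\Phi_x$ is injective for every LCTVS by Hahn--Banach. The colimit-induced left inverse $\Psi$ is therefore unnecessary, and convenience matters only for surjectivity. Second, the integral $\int_0^1\gamma'(st)\,ds$ is also unnecessary. Put $\eta(t):=(\gamma(t)-x)/t$ for $t\neq0$ and $\eta(0):=y$, where $y\in E$ is any vector with $\ell(y)=(\ell\circ\gamma)'(0)$ for all $\ell\in E'$. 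Each $\ell\circ\eta$ is smooth by the scalar Hadamard lemma, so $\eta$ is a plot. Together with $\Theta$, this shows that $\Phi_x$ is onto exactly when the weak derivative at $0$ of every scalarwise smooth curve is represented by an element of $E$. The theorem then says that this property is equivalent to convenience.

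The converse is the part you leave as a sketch, and the difficulty you flag is real. Failure of convenience, read through \cite[Theorem~2.14]{KM1997}, produces a scalarwise smooth curve that is not differentiable at $0$. That is not yet a curve whose weak derivative is unrepresented. Your plan does close the gap, as follows.
Take a Mackey--Cauchy sequence with no limit in $E$; its limit $x_\infty$ in the completion $\widetilde E$ lies outside $E$. Pass to a fast-converging subsequence and let $\gamma\colon\mathbb R\to\widetilde E$ be the special-curve-lemma polygon through it, so that $\gamma(0)=x_\infty$ and $\gamma(s)\in E$ for $s>0$. Now set $c(t):=t\,\gamma(t^{2})$.
\begin{itemize}
\item It takes values in $E$: for $t\neq0$ the argument $t^{2}$ is positive, and $c(0)=0$.
\item It is scalarwise smooth in $E$, because each $\ell\in E'$ extends to some $\tilde\ell\in\widetilde E'$.
\item It satisfies $(\ell\circ c)'(0)=\tilde\ell(x_\infty)$.
\end{itemize}
Since restriction $\widetilde E'\to E'$ is bijective, any $y\in E$ with $\ell(y)=\tilde\ell(x_\infty)$ for all $\ell$ would satisfy $y=x_\infty$ by Hahn--Banach in $\widetilde E$, which is impossible.

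Alternatively, keep the non-differentiable curve. Its difference quotients are Mackey--Cauchy, and closed absolutely convex bounded sets are weakly closed, so a weak limit in $E$ would be a Mackey limit.

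With either completion, $\Theta\bigl(\frac{d}{dt}\big|_{t=0}(x+c)\bigr)\notin\Theta(\Phi_x(E))$, and your argument is complete.
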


Throughout the paper, whenever an ambient locally convex vector space is
convenient we use Theorem \ref{thm:convenient-tangent} to identify its internal
tangent spaces canonically with the underlying vector space. The next lemma
records that under this identification the internal velocity of a curve is its
Mackey derivative; it is what allows tangent computations to be carried out by
ordinary calculus.

\begin{lemma}\label{lem:velocity}
	Let $F$ be a convenient vector space with its canonical diffeology and let
	$c:\mathbb R\to F$ be a $C^\infty$ curve. Then
	$$
	\left.\frac{d}{dt}\right|_{t=0}c=\Phi_{c(0)}\bigl(c'(0)\bigr),
	$$
	where $c'(0)$ is the Mackey derivative of $c$ at $0$.
\end{lemma}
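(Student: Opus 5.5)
The plan is to identify the internal tangent vector $\left.\frac{d}{dt}\right|_{t=0}c$ with an affine-line vector by comparing $c$ with the straight line $\gamma_{c'(0)}(t)=c(0)+t\,c'(0)$. Both are $1$-plots centered at $x:=c(0)$. By definition, $\Phi_x(c'(0))=\left.\frac{d}{dt}\right|_{t=0}\gamma_{c'(0)}$, so it suffices to show that the two curves define the same internal tangent vector. Since $F$ is convenient, Theorem~\ref{thm:convenient-tangent} says $\Phi_x$ is an isomorphism. It is therefore enough to show that $\Phi_x^{-1}$ sends $\left.\frac{d}{dt}\right|_{t=0}c$ to $c'(0)$.

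To do this, I will build a linear "derivative" map $D:T_xF\to F$ through the universal property of the colimit. For each plot $P:U\to F$ centered at $x$, define $D_P:T_0U\to F$ by $D_P(w)=\frac{d}{ds}\big|_{s=0}P(sw)$, the Mackey derivative of the smooth curve $s\mapsto P(sw)$.

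The preliminary facts needed are as follows.
\begin{itemize}
\item Plots of the canonical diffeology coincide with $c^\infty$-plots (Proposition~\ref{prop:convenient-diffeology}), so $s\mapsto P(sw)$ is a $C^\infty$ curve and $D_P$ is defined.
\item $D_P$ is linear in $w$. Scalarwise, $\ell(D_P(w))=d(\ell\circ P)_0(w)$ for every continuous linear $\ell$. Linearity of $d(\ell\circ P)_0$ and the fact that the dual separates points (Hausdorff) give linearity of $D_P$.
\item The $D_P$ form a cocone. For a morphism $Q=P\circ G$ near $0$ with $G(0)=0$, the classical chain rule gives $\ell\circ D_Q=d(\ell\circ P)_0\circ dG_0=\ell\circ D_P\circ dG_0$ for all $\ell$. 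Hence $D_Q=D_P\circ dG_0$.
\end{itemize}
Thus $D$ descends to a linear map $D:T_xF\to F$ with $D\circ dP_0=D_P$.

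Now evaluate $D$ on both curves. For the line, $D(\Phi_x(v))=\frac{d}{ds}\big|_{s=0}(x+sv)=v$, so $D\circ\Phi_x=\mathrm{id}_F$. Since $\Phi_x$ is bijective, this forces $D=\Phi_x^{-1}$. For the curve $c$, $D\bigl(\left.\frac{d}{dt}\right|_{t=0}c\bigr)=D_c(1)=c'(0)$. Applying $\Phi_x$ gives $\left.\frac{d}{dt}\right|_{t=0}c=\Phi_x(c'(0))$.

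The main obstacle is the well-definedness of $D_P$ and its identification with scalar differentials. This needs two ingredients: that Mackey derivatives of smooth curves in $F$ are detected by continuous linear functionals, i.e. $\ell(c'(0))=(\ell\circ c)'(0)$, and the Hausdorff/separation property. I will take both from \cite{KM1997} via convenience. Everything else is formal colimit bookkeeping.
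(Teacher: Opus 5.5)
Your argument is correct, but it takes a genuinely different route from the paper's.

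\emph{The paper's route.} The paper never builds a map $T_xF\to F$ and never invokes Theorem~\ref{thm:convenient-tangent}. It writes a Hadamard decomposition $c(t)=x+t\,c'(0)+t^2R(t)$ and checks scalarwise, via Taylor's formula with integral remainder, that $R$ is a plot. It then introduces the two-parameter plot $h(u,v)=x+u\,c'(0)+uv\,R(u)$. On the diagonal $h$ restricts to $c$, on the $u$-axis to the affine line $\gamma_{c'(0)}$, and on the $v$-axis to the constant $x$. Proposition~\ref{prop:const-vanish} gives $dh_0(e_2)=0$, so $c$ and $\gamma_{c'(0)}$ have the same internal velocity. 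This argument uses only the chain rule and the vanishing of differentials of constant maps. Combined with Remark~\ref{rem:curve-representation-finite-sums}, it essentially reproves the surjectivity of $\Phi_x$.

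\emph{Your route.} Your cocone $D_P(w)=\frac{d}{ds}\big|_{s=0}P(sw)$ needs only convenience (so that $D_P$ lands in $F$) and Hahn--Banach separation. The identity $D\circ\Phi_x=\mathrm{id}_F$ then gives injectivity of $\Phi_x$ for free. However, to conclude $D=\Phi_x^{-1}$ you must import surjectivity of $\Phi_x$, which is exactly the nontrivial half of Theorem~\ref{thm:convenient-tangent} that the paper's argument avoids. This is legitimate here, since the paper treats that theorem as an external result. It does mean your proof could not be used to establish the theorem itself.

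\emph{What your route buys.} You obtain a stronger conclusion: for every plot $P$ centered at $x$, not only curves, $\Phi_x^{-1}\circ dP_0$ is the directional Mackey derivative of $P$ at $0$.

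\emph{A minor technical point.} The curve $s\mapsto sw$ maps only a neighbourhood of $0$ into $\operatorname{dom}P$. So before applying the definition of the $c^\infty$-diffeology, compose with a smooth cutoff equal to the identity near $0$, or apply \cite[Theorem~2.14]{KM1997} locally.
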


\begin{proof}
	Write $x:=c(0)$. By Proposition \ref{prop:convenient-diffeology} the curve $c$
	is a $1$-plot centered at $x$, all Mackey derivatives $c^{(k)}(0)$ exist in $F$,
	and $\ell\circ c$ is smooth with $(\ell\circ c)^{(k)}=\ell\circ c^{(k)}$ for
	every continuous linear functional $\ell$ on $F$.
	
	Define $R:\mathbb R\to F$ by $R(t):=t^{-2}\bigl(c(t)-x-tc'(0)\bigr)$ for
	$t\neq0$ and $R(0):=\tfrac12 c''(0)$. For each continuous linear functional
	$\ell$, Taylor's formula with integral remainder applied to
	$g:=\ell\circ c\in C^\infty(\mathbb R,\mathbb R)$ gives
	$(\ell\circ R)(t)=\int_0^1(1-s)g''(st)\,ds$ for all $t$, which is smooth. Hence
	$R$ is a plot and
	\begin{equation}\label{eq:hadamard-vector}
		c(t)=x+t\,c'(0)+t^2R(t),\qquad t\in\mathbb R .
	\end{equation}
	
	Set $h(u,v):=x+u\,c'(0)+uv\,R(u)$, which is a plot centered
	at $x$. With $\delta(t):=(t,t)$, $\iota(t):=(t,0)$ and $j(t):=(0,t)$ we have,
	by \eqref{eq:hadamard-vector},
	$$
	h\circ\delta=c,\qquad h\circ\iota=\gamma_{c'(0)},\qquad h\circ j\equiv x .
	$$
	Let $e_1,e_2$ be the standard basis of $T_0\mathbb R^2\cong\mathbb R^2$ and
	$\tfrac{d}{dt}$ denotes the standard
	basis vector of $ T_0\mathbb R\cong\mathbb R$. Since $h\circ j$ is constant,
	Proposition \ref{prop:const-vanish} and the chain rule
	(Proposition \ref{prop:chain-rule}) give
	$dh_0(e_2)=d(h\circ j)_0(\tfrac{d}{dt})=0$. As $dh_0$ is linear and
	$d\delta_0(\tfrac{d}{dt})=e_1+e_2$, $d\iota_0(\tfrac{d}{dt})=e_1$, we obtain
	$$
	\left.\frac{d}{dt}\right|_{t=0}c
	=dh_0(e_1+e_2)=dh_0(e_1)
	=\left.\frac{d}{dt}\right|_{t=0}\gamma_{c'(0)}
	=\Phi_x\bigl(c'(0)\bigr). \qedhere
	$$
\end{proof}

\begin{proposition}\label{prop:tangent-linear}
	Let $E_1,\dots,E_m$ and $F$ be convenient vector spaces equipped with their
	canonical diffeologies, and let
	$$
	L:E:=E_1\times\cdots\times E_m\longrightarrow F
	$$
	be a smooth multilinear map. Then for each $x=(x_1,\dots,x_m)\in E$ the
	internal tangent map $dL_x:T_xE\to T_{L(x)}F$ is identified with the linear map
	$$
	(v_1,\dots,v_m)\longmapsto
	\sum_{j=1}^m L(x_1,\dots,x_{j-1},v_j,x_{j+1},\dots,x_m)
	$$
	under the canonical isomorphisms $T_xE\simeq E$ and $T_{L(x)}F\simeq F$ of
	Theorem \ref{thm:convenient-tangent}. In particular, for $m=1$ the internal
	tangent map of a smooth linear map $L:E\to F$ is identified with $L$ itself.
\end{proposition}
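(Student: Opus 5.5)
The approach is to reduce the computation of $dL_x$ to curve velocities, where Lemma~\ref{lem:velocity} turns internal velocities into Mackey derivatives. By Theorem~\ref{thm:convenient-tangent} applied to $E$ (convenient, since a finite product of convenient spaces is convenient, with canonical diffeology equal to the product diffeology), every element of $T_xE$ has the form $\Phi_x(v)$ for a unique $v=(v_1,\dots,v_m)\in E$. By definition, $\Phi_x(v)$ is the velocity of the affine $1$-plot $\gamma_v(t)=x+tv$. Definition~\ref{def:tangent-map} then gives
$$
dL_x\bigl(\Phi_x(v)\bigr)=dL_x\bigl(d(\gamma_v)_0(\tfrac{d}{dt})\bigr)=d(L\circ\gamma_v)_0(\tfrac{d}{dt})=\left.\frac{d}{dt}\right|_{t=0}(L\circ\gamma_v).
$$
So it suffices to identify the velocity of the curve $c:=L\circ\gamma_v$ in $F$.

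Since $L$ is smooth and $\gamma_v$ is a plot, $c$ is a $1$-plot in $F$. By Proposition~\ref{prop:convenient-diffeology}, $c$ is therefore a $C^\infty$ curve in the sense of \cite{KM1997}. Lemma~\ref{lem:velocity} then yields $\left.\frac{d}{dt}\right|_{t=0}c=\Phi_{L(x)}(c'(0))$. It remains to compute the Mackey derivative $c'(0)$. Multilinearity lets us expand
$$
c(t)=L(x_1+tv_1,\dots,x_m+tv_m)=\sum_{k=0}^m t^k\,S_k,
$$
where $S_0=L(x)$, $S_1=\sum_j L(x_1,\dots,v_j,\dots,x_m)$, and each $S_k$ is a fixed vector of $F$ (a sum of values of $L$ with $k$ slots replaced by the $v_j$). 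Thus $c$ is a polynomial curve with coefficients in $F$, and its Mackey derivative at $0$ is $S_1$. To verify this, note that $t^{-1}(c(t)-c(0))-S_1=\sum_{k\ge2}t^{k-1}S_k$. This lies in $|t|\cdot B$ for $|t|\le1$, where $B$ is the absolutely convex hull of $\{S_2,\dots,S_m\}$, which is bounded. This is precisely Mackey convergence.

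Combining these steps gives $dL_x(\Phi_x(v))=\Phi_{L(x)}\bigl(\sum_j L(x_1,\dots,v_j,\dots,x_m)\bigr)$, which is the claimed identification. For $m=1$ the sum reduces to $L(v)$. The only point that needs any care is the middle step: confirming that $L\circ\gamma_v$ is a genuine $C^\infty$ curve, so that Lemma~\ref{lem:velocity} applies. This is exactly where smoothness of $L$ (boundedness) and Proposition~\ref{prop:convenient-diffeology} are used. Everything else is formal.
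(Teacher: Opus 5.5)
Your proposal is correct and follows essentially the same route as the paper: surjectivity of $\Phi_x$, the affine plot $\gamma_v$, the defining relation of $dL_x$ on plots, the polynomial expansion of $c=L\circ\gamma_v$, and Lemma~\ref{lem:velocity}. The differences are minor. The paper reads off that $c$ is $C^\infty$ directly from its being a polynomial curve, rather than via smoothness of $L$ and Proposition~\ref{prop:convenient-diffeology}. Also, in your explicit Mackey estimate the remainder $\sum_{k\ge2}t^{k-2}S_k$ lies in $(m-1)B$, not in $B$, for $|t|\le1$, which is harmless since $(m-1)B$ is still bounded.
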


\begin{proof}
	By \cite[Proposition~3.11]{ADM2026} the product $E$ is a convenient vector
	space whose canonical diffeology is the product diffeology, so
	Theorem \ref{thm:convenient-tangent} applies to $E$ and to $F$ and yields the
	stated isomorphisms $\Phi_x:E\to T_xE$ and $\Phi_{L(x)}:F\to T_{L(x)}F$. Since
	$\Phi_x$ is surjective, it suffices to compute $dL_x\bigl(\Phi_x(v)\bigr)$ for
	an arbitrary $v=(v_1,\dots,v_m)\in E$.
	
	Let $\gamma_v(t):=(x_1+tv_1,\dots,x_m+tv_m)$ and $c:=L\circ\gamma_v$. By the
	chain rule (Proposition \ref{prop:chain-rule}),
	\begin{equation}\label{eq:chain-on-gamma}
		dL_x\bigl(\Phi_x(v)\bigr)
		=dL_x\!\left(\left.\frac{d}{dt}\right|_{t=0}\gamma_v\right)
		=\left.\frac{d}{dt}\right|_{t=0}c .
	\end{equation}
	Multilinearity expands $c$ as the finite sum
	$$
	c(t)=\sum_{I\subseteq\{1,\dots,m\}}t^{|I|}a_I,
	\qquad
	a_I:=L(y_1^I,\dots,y_m^I),
	\qquad
	y_j^I:=
	\begin{cases}
		v_j,& j\in I,\\
		x_j,& j\notin I .
	\end{cases}
	$$
	Thus $c$ is a polynomial curve with coefficients in $F$; in particular it is a
	$C^\infty$ curve and its Mackey derivative at $0$ is the coefficient of $t$,
	namely
	$$
	c'(0)=\sum_{|I|=1}a_I=\sum_{j=1}^m L(x_1,\dots,x_{j-1},v_j,x_{j+1},\dots,x_m).
	$$
	Combining \eqref{eq:chain-on-gamma} with Lemma \ref{lem:velocity} and
	$c(0)=L(x)$ gives
	$$
	dL_x\bigl(\Phi_x(v)\bigr)
	=\Phi_{L(x)}\bigl(c'(0)\bigr)
	=\Phi_{L(x)}\!\left(\sum_{j=1}^m L(x_1,\dots,x_{j-1},v_j,x_{j+1},\dots,x_m)\right),
	$$
	which is the stated formula. For $m=1$ the sum reduces to $L(v)$.
\end{proof}

\begin{example}\label{exa:quad}
	Let $V$ be a convenient space of vector fields admitting a continuous linear
	injection $V\hookrightarrow L^\infty\bigl(0,T;L^2(\mathbb T^d;\mathbb R^d)\bigr)$,
	and set
	$
	W:=L^\infty\bigl(0,T;L^1(\mathbb T^d;\mathbb R^{d\times d})\bigr),
	$
	both equipped with their canonical diffeologies; $W$ is a Banach space and
	hence convenient. 
	Consider the bilinear map
	$$
	B:V\times V\to W,\qquad B(v,w):=v\otimes w.
	$$
	Since the linear injection $\iota:V\hookrightarrow L^\infty\bigl(0,T;L^2\bigr)$ is
	continuous, there is a continuous seminorm $p$ on $V$ with
	$\|v\|_{L^\infty_tL^2_x}\le p(v)$ for all $v\in V$. Hence
	$$
	\|B(v,w)\|_{W}=\|v\otimes w\|_{L^\infty_tL^1_x}
	\le\|v\|_{L^\infty_tL^2_x}\|w\|_{L^\infty_tL^2_x}\le p(v)\,p(w),
	$$
	so $B$ is a continuous bilinear map, therefore bounded, and hence smooth.
	By Proposition \ref{prop:tangent-linear},
	$$
	dB_{(v,w)}(h,k)=h\otimes w+v\otimes k .
	$$
	The diagonal $\Delta:V\to V\times V$, $\Delta(v):=(v,v)$, is linear and
	continuous, hence smooth, so the quadratic map
	$$
	qu:=B\circ\Delta:V\to W,\qquad qu(v)=v\otimes v,
	$$
	is smooth, and by the chain rule (Proposition \ref{prop:chain-rule}),
	$$
	d(qu)_v(h)=dB_{(v,v)}(h,h)=h\otimes v+v\otimes h .
	$$
\end{example}

\section{Internal tangent directions to Euler solution strata}
\label{sec:tangent-strata}

We develop a description of the internal tangent space to
the solution strata of the incompressible Euler equations.
The analysis proceeds in three stages of increasing generality.
We linearize the Euler operator along smooth solutions,
extend the linearization distributionally to weak solutions, and incorporate energy constraints as infinitesimal
admissibility conditions on tangent directions.

\subsection{Smooth solutions and the linearized Euler operator}
\label{sec:smooth-euler}

We first analyze the internal tangent geometry of the space of smooth
solutions of the incompressible Euler equations. On the torus
$\mathbb T^d$, these equations are
\begin{equation}
	\label{eq:euler}
	\partial_t v+\operatorname{div}(v\otimes v)+\nabla p=0,
	\qquad
	\operatorname{div}v=0.
\end{equation}
Here and below, the divergence of a matrix-valued field is taken row-wise, and
$(\nabla\psi)_{ij}=\partial_j\psi_i$, so that
$\langle\operatorname{div}M,\psi\rangle=-\int M:\nabla\psi$ and
$M^{T}\!:\!A=M:A^{T}$.

Set
$\mathcal V
:=
C^\infty([0,T]\times\mathbb T^d;\mathbb R^d),$
which is a Fréchet space and, in
particular, a convenient vector space. Hence, by
Theorem~\ref{thm:convenient-tangent}, for every $v\in\mathcal V$ there is a
canonical vector-space isomorphism
$\Phi_v^{\mathcal V}:\mathcal V\xrightarrow{\;\cong\;}T_v\mathcal V.$
Define the space of smooth Euler solutions by
$$
\mathcal S^\infty
:=
\bigl\{
v\in\mathcal V \;\bigm|\;
\text{$(v,p)$ satisfies \eqref{eq:euler} for some }
p\in C^\infty([0,T]\times\mathbb T^d)
\bigr\}.
$$
We equip $\mathcal S^\infty$ with the subspace diffeology induced by
$\mathcal V$ and denote the inclusion by
$\iota:\mathcal S^\infty\hookrightarrow\mathcal V.$
For a prescribed divergence-free initial datum
$v_0\in C^\infty(\mathbb T^d;\mathbb R^d)$, define
$$
\mathcal S^\infty_{v_0}
:=
\bigl\{
v\in\mathcal S^\infty\mid v(0,\cdot)=v_0
\bigr\},
$$
again with the induced subspace diffeology.
 
\begin{proposition}
	\label{prop:lin-euler-smooth}
	Let $\iota:\mathcal S^\infty\hookrightarrow\mathcal V$ denote the inclusion,
	let $v\in\mathcal S^\infty$, and let $\xi\in T_v\mathcal S^\infty$. Define the
	ambient realization of $\xi$ by
	$ 
	w:=\bigl(\Phi_v^{\mathcal V}\bigr)^{-1}\bigl(d\iota_v(\xi)\bigr)\in\mathcal V .
	$ 
	Then $\operatorname{div}w=0$ on $(0,T)\times\mathbb T^d$, and there is a unique
	$q\in C^\infty([0,T]\times\mathbb T^d;\mathbb R)$ with vanishing spatial mean
	such that
	\begin{equation}
		\label{eq:lin-euler}
		\partial_t w+\operatorname{div}\bigl(v\otimes w+w\otimes v\bigr)+\nabla q=0
		\qquad\text{on }(0,T)\times\mathbb T^d .
	\end{equation}
	If moreover $v\in\mathcal S^\infty_{v_0}$ and $\xi\in T_v\mathcal S^\infty_{v_0}$,
	and if the ambient realization is taken along the composite inclusion
	$$
	\mathcal S^\infty_{v_0}
	\xrightarrow{\ \jmath\ }\mathcal S^\infty
	\xrightarrow{\ \iota\ }\mathcal V,
	\qquad
	\iota_{v_0}:=\iota\circ\jmath,
	\qquad
	w:=\bigl(\Phi_v^{\mathcal V}\bigr)^{-1}\bigl(d(\iota_{v_0})_v(\xi)\bigr),
	$$
	then \eqref{eq:lin-euler} holds and in addition
	$$
	w(0,\cdot)=0.
	$$
\end{proposition}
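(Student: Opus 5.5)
Since $T_v\mathcal S^\infty$ is a colimit, Remark~\ref{rem:curve-representation-finite-sums} lets us write every $\xi$ as a finite linear combination of representable vectors $\left.\frac{d}{dt}\right|_{t=0}\gamma$, where $\gamma$ is a $1$-plot in $\mathcal S^\infty$ centered at $v$. The map $\xi\mapsto w$ is linear, being the composite of $d\iota_v$ and $(\Phi^{\mathcal V}_v)^{-1}$. The conclusions ($\operatorname{div}w=0$, existence of a mean-zero $q$ solving \eqref{eq:lin-euler}, and $w(0,\cdot)=0$) are also stable under linear combinations, because the equation is linear in $(w,q)$ and mean-zero pressures add. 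It therefore suffices to treat $\xi=\left.\frac{d}{dt}\right|_{t=0}\gamma$. In that case $\iota\circ\gamma$ is a $1$-plot in the Fréchet space $\mathcal V$, hence a $C^\infty$ curve by Proposition~\ref{prop:convenient-diffeology}. Lemma~\ref{lem:velocity} then gives $w=(\iota\circ\gamma)'(0)=\partial_s|_{s=0}v_s$, where $v_s:=\gamma(s)$ and the derivative is the Mackey derivative in $\mathcal V$. Convergence in $\mathcal V$ is $C^k$-uniform for every $k$, so $w$ is the pointwise derivative $\partial_s v_s(t,x)|_{s=0}$, and it commutes with all $\partial_t,\partial_x$ derivatives.

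The constraint $\operatorname{div}v_s=0$ holds for each $s$. The map $\operatorname{div}$ is continuous and linear on $\mathcal V$, so differentiating at $s=0$ gives $\operatorname{div}w=0$. For the momentum equation we avoid choosing a pressure $p_s$ depending smoothly on $s$, since a pressure is only given for each fixed $s$. Instead we eliminate it with the Leray projector $\mathbb P$, a continuous linear map on $C^\infty([0,T]\times\mathbb T^d;\mathbb R^d)$ that acts as a Fourier multiplier in $x$ and is smooth in $t$. A smooth $v$ with $\operatorname{div}v=0$ lies in $\mathcal S^\infty$ iff $\mathbb P\bigl(\partial_t v+\operatorname{div}(v\otimes v)\bigr)=0$. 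Indeed, a field annihilated by $\mathbb P$ is a gradient $-\nabla p$, and $p$ is recovered smoothly with zero mean as $p=-\Delta^{-1}\operatorname{div}\operatorname{div}(v\otimes v)$, using $\partial_t v$ divergence-free. Applying the smooth map $s\mapsto\mathbb P(\partial_t v_s+\operatorname{div}(v_s\otimes v_s))\equiv0$, differentiating via Example~\ref{exa:quad} (or directly, since $\otimes$ is continuous bilinear on $\mathcal V$) and Proposition~\ref{prop:tangent-linear}, yields
\[
\mathbb P\bigl(\partial_t w+\operatorname{div}(v\otimes w+w\otimes v)\bigr)=0 .
\]
Hence $\partial_t w+\operatorname{div}(v\otimes w+w\otimes v)=-\nabla q$ with
\[
q:=-\Delta^{-1}\operatorname{div}\operatorname{div}(v\otimes w+w\otimes v),
\]
which is smooth and has zero spatial mean.

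Uniqueness of $q$ is immediate: two solutions differ by a function $r$ with $\nabla r=0$, so $r=r(t)$, and the zero-mean condition forces $r=0$. For the initial-datum stratum, take $\gamma$ in $\mathcal S^\infty_{v_0}$. Then $\iota_{v_0}\circ\gamma$ is a curve in $\mathcal V$ with $v_s(0,\cdot)=v_0$ for all $s$. Evaluation at $t=0$ is continuous and linear, so differentiating gives $w(0,\cdot)=0$. The identity $d(\iota_{v_0})_v=d\iota_v\circ d\jmath_v$ (Proposition~\ref{prop:chain-rule}) shows that \eqref{eq:lin-euler} follows from the first part, and linearity again handles finite sums.

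The main obstacle is the pressure step. Each $v_s$ has its own pressure, and nothing says these vary smoothly in $s$, so we cannot differentiate $p_s$ directly. The Leray-projector reformulation, together with the continuity of $\mathbb P$ and $\Delta^{-1}$ on mean-zero smooth functions on $[0,T]\times\mathbb T^d$, is what removes this problem.
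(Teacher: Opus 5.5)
Your proof is correct and follows essentially the paper's route. The paper also removes the non-smoothly-varying pressure with a nonlocal operator: it writes the constraint as $\mathcal R(u)=\partial_t u+\operatorname{div}(u\otimes u)+\nabla\mathcal P(u)=0$ with $\mathcal P(u)=(-\Delta)^{-1}_0\operatorname{div}\operatorname{div}(u\otimes u)$, a map that coincides with your Leray-projected map on divergence-free fields, and it obtains the same $q=d\mathcal P_v(w)$; the only other difference is that the paper applies the chain rule and Proposition~\ref{prop:const-vanish} to $\mathcal D\circ\iota$ and $\mathcal R\circ\iota$ for arbitrary $\xi$ directly, so it needs neither your reduction to representable vectors nor Lemma~\ref{lem:velocity}.
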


\begin{proof}
	Since $\mathbb T^d$ is closed, $\operatorname{div}\operatorname{div}(u_1\otimes u_2)(t,\cdot)$
	has vanishing spatial mean for every $t$, so $(-\Delta)^{-1}_0$ may be applied.
	On the Fourier side $(-\Delta)^{-1}_0$ is the multiplier $|k|^{-2}\mathbf 1_{k\neq0}$,
	which acts fibrewise in $t$, commutes with $\partial_t$, and satisfies
	$\|(-\Delta)^{-1}_0f\|_{H^{s}_x}\le 2\,\|f\|_{H^{s-2}_x}$ for zero-mean $f$.
	Since $C^\infty([0,T]\times\mathbb T^d)=\bigcap_{k,s}C^k\bigl([0,T];H^s(\mathbb T^d)\bigr)$
	with the corresponding seminorms, $(-\Delta)^{-1}_0$ preserves joint smoothness and is
	continuous for the Fr\'echet seminorms.
	Define
	$$
	\mathbf B(u_1,u_2):=\operatorname{div}(u_1\otimes u_2),
	\qquad
	\mathbf p(u_1,u_2):=(-\Delta)^{-1}_0\operatorname{div}\operatorname{div}(u_1\otimes u_2).
	$$
	Pointwise tensor multiplication
	$\mathcal V\times\mathcal V\to C^\infty([0,T]\times\mathbb T^d;\mathbb R^{d\times d})$
	is continuous bilinear, and $\operatorname{div}$,
	$(-\Delta)^{-1}_0\operatorname{div}\operatorname{div}$ are continuous linear on
	the relevant Fr\'echet spaces. Hence $\mathbf B$ and $\mathbf p$ are continuous
	bilinear, so the associated quadratic maps
	$$
	\mathcal P(u):=\mathbf p(u,u),
	\qquad
	\mathcal Q(u):=\mathbf B(u,u)+\nabla\mathbf p(u,u)
	$$
	are smooth for the canonical diffeologies with 
	\begin{equation}
		\label{eq:polarization}
		d\mathcal P_v(w)=\mathbf p(v,w)+\mathbf p(w,v),
		\qquad
		d\mathcal Q_v(w)=\mathbf B(v,w)+\mathbf B(w,v)
		+\nabla\bigl(\mathbf p(v,w)+\mathbf p(w,v)\bigr).
	\end{equation}
	The map $\mathcal P$ is characterized by
	$$
	-\Delta\mathcal P(u)=\operatorname{div}\operatorname{div}(u\otimes u),
	\qquad
	\int_{\mathbb T^d}\mathcal P(u)(t,x)\,dx=0
	\quad\text{for all }t\in[0,T].
	$$
	Let $v\in\mathcal S^\infty$ and let $p$ be a pressure as in the definition of
	$\mathcal S^\infty$, so that
	$$\partial_t v+\operatorname{div}(v\otimes v)+\nabla p=0$$ 
	and
	$\operatorname{div}v=0$. Taking the spatial divergence of the momentum equation
	and using $\operatorname{div}v=0$, hence $\partial_t\operatorname{div}v=0$,
	gives
	$$
	-\Delta p=\operatorname{div}\operatorname{div}(v\otimes v)=-\Delta\mathcal P(v).
	$$
	For each fixed $t$ the function $p(t,\cdot)-\mathcal P(v)(t,\cdot)$ is harmonic
	on the compact connected manifold $\mathbb T^d$, hence constant; so
	$p-\mathcal P(v)$ is a function of $t$ alone and in particular
	$\nabla p=\nabla\mathcal P(v)$. Therefore
	\begin{equation}
		\label{eq:euler-normalized}
		\partial_t v+\operatorname{div}(v\otimes v)+\nabla\mathcal P(v)=0,
		\qquad
		\operatorname{div}v=0
		\qquad\text{for every }v\in\mathcal S^\infty .
	\end{equation}
	Set
	$$
	\mathcal D:\mathcal V\to C^\infty([0,T]\times\mathbb T^d;\mathbb R),
	\qquad
	\mathcal D(u):=\operatorname{div}u,
	$$
	$$
	\mathcal R:\mathcal V\to C^\infty([0,T]\times\mathbb T^d;\mathbb R^d),
	\qquad
	\mathcal R(u):=\partial_t u+\operatorname{div}(u\otimes u)+\nabla\mathcal P(u).
	$$
	The map $\mathcal D$ is continuous linear, hence smooth, and
	$d\mathcal D_v=\mathcal D$ for every $v$. The map $\mathcal R$ is not
	multilinear: it is a polynomial of degree two, and we decompose it accordingly
	as
	$$
	\mathcal R=\mathcal L+\mathcal Q,
	\qquad
	\mathcal L(u):=\partial_t u,
	$$
	with $\mathcal L$ continuous linear and $\mathcal Q$ the quadratic map as above. Hence $\mathcal R$ is smooth and, by \eqref{eq:polarization},
	\begin{equation}
		\label{eq:dR}
		d\mathcal R_v(w)
		=\partial_t w
		+\operatorname{div}(v\otimes w+w\otimes v)
		+\nabla\bigl(\mathbf p(v,w)+\mathbf p(w,v)\bigr).
	\end{equation}
	By \eqref{eq:euler-normalized} both observables vanish along the inclusion:
	$
	\mathcal D\circ\iota=0,
	\mathcal R\circ\iota=0 .
	$
	Since $\mathcal D\circ\iota$ is constant, Proposition \ref{prop:const-vanish} and the chain rule (Proposition \ref{prop:chain-rule}) yield
	$$
	0=d(\mathcal D\circ\iota)_v(\xi)
	=d\mathcal D_v\bigl(d\iota_v(\xi)\bigr)
	=d\mathcal D_v\bigl(\Phi_v^{\mathcal V}(w)\bigr)
	=\operatorname{div}w,
	$$
	where the last equality uses $d\mathcal D_v=\mathcal D$ together with the
	identification $\Phi_v^{\mathcal V}$. In the same way,
	$$
	0=d(\mathcal R\circ\iota)_v(\xi)
	=d\mathcal R_v\bigl(d\iota_v(\xi)\bigr)
	=d\mathcal R_v\bigl(\Phi_v^{\mathcal V}(w)\bigr),
	$$
	so that, by \eqref{eq:dR},
	$$
	\partial_t w+\operatorname{div}(v\otimes w+w\otimes v)
	+\nabla\bigl(\mathbf p(v,w)+\mathbf p(w,v)\bigr)=0 .
	$$
	Setting
	$$
	q:=d\mathcal P_v(w)=\mathbf p(v,w)+\mathbf p(w,v)
	=(-\Delta)^{-1}_0\operatorname{div}\operatorname{div}\bigl(v\otimes w+w\otimes v\bigr)
	$$
	gives \eqref{eq:lin-euler}, and $q\in C^\infty([0,T]\times\mathbb T^d;\mathbb R)$
	with vanishing spatial mean because $(-\Delta)^{-1}_0$ maps into the zero-mean
	subspace and preserves smoothness. Moreover, $q$ is unique: if
	$q_1,q_2$ both satisfy \eqref{eq:lin-euler}, then $\nabla(q_1-q_2)=0$, so
	$q_1-q_2$ depends only on $t$, and the zero-mean normalization forces
	$q_1=q_2$.
	
    Finally, let
	$$
	\operatorname{ev}_0:\mathcal V\to C^\infty(\mathbb T^d;\mathbb R^d),
	\qquad
	\operatorname{ev}_0(u):=u(0,\cdot),
	$$
	which is continuous linear, hence smooth, with
	$d(\operatorname{ev}_0)_v=\operatorname{ev}_0$ for every $v$. By definition
	$$
	\mathcal S^\infty_{v_0}
	=\mathcal S^\infty\cap\operatorname{ev}_0^{-1}(v_0),
	$$
	so the composite $\operatorname{ev}_0\circ\iota_{v_0}$ is the constant map
	$v_0$. Applying the chain rule to
	$\operatorname{ev}_0\circ\iota_{v_0}:\mathcal S^\infty_{v_0}\to
	C^\infty(\mathbb T^d;\mathbb R^d)$ at $v$ therefore gives
	$$
	0=d(\operatorname{ev}_0\circ\iota_{v_0})_v(\xi)
	=d(\operatorname{ev}_0)_v\bigl(d(\iota_{v_0})_v(\xi)\bigr)
	=\operatorname{ev}_0\bigl(\Phi_v^{\mathcal V}(w)\bigr)
	=w(0,\cdot).
	$$
	Since $\iota_{v_0}=\iota\circ\jmath$, the chain rule
	gives
	$d(\iota_{v_0})_v=d\iota_v\circ d\jmath_v$, so the ambient realization $w$
	produced here coincides with the one used above applied to
	$d\jmath_v(\xi)\in T_v\mathcal S^\infty$. Hence \eqref{eq:lin-euler} also holds
	for this $w$, which completes the proof.
\end{proof}

\begin{remark}
	\label{rem:linearized-inclusion}
	Proposition~\ref{prop:lin-euler-smooth} gives the inclusion
	$$
	d\iota_v\bigl(T_v\mathcal S^\infty\bigr)
	\subseteq
	\Phi_v^{\mathcal V}
	\left(
	\left\{
	w\in\mathcal V \;\middle|\;
	\begin{array}{l}
		\operatorname{div}w=0,\ \text{and there exists }q\in
		C^\infty([0,T]\times\mathbb T^d)\\
		\text{such that } \partial_t w
		+\operatorname{div}(v\otimes w+w\otimes v)
		+\nabla q=0
	\end{array}
	\right\}
	\right).
	$$
	For the fixed-initial-data solution space, the right-hand side is further
	restricted by
	$w(0,\cdot)=0.$
	
	No converse statement is asserted. In particular, a solution of the
	linearized Euler equation need not be tangent to a smooth curve of exact
	Euler solutions. Thus the ambient linearized equation provides a necessary
	condition for internal realizability, but not a sufficient one.
\end{remark}

\subsection{Distributional linearization of weak solutions}
\label{sec:distributional-linearization}
The purpose of this subsection is to show that every internal tangent vector to the weak solution
stratum, when realized in the ambient Banach space, satisfies the
distributional linearized Euler system.

Let
$E:=L^\infty\bigl(0,T;L^2(\mathbb T^d;\mathbb R^d)\bigr),$
equipped with the canonical diffeology associated with its Banach topology.
Since $E$ is a Banach space, it is convenient. Hence, by
Theorem~\ref{thm:convenient-tangent}, for every $v\in E$ the canonical map
$\Phi_v^E:E\overset{\cong}{\longrightarrow}T_vE,$
is a vector-space isomorphism.

Let
$v_0\in L^2(\mathbb T^d;\mathbb R^d)$
be a prescribed divergence-free initial datum. We denote by
$\mathcal S^w(v_0)\subseteq E$
the set of weak solutions of the incompressible Euler equations on
$[0,T]\times\mathbb T^d$ with initial datum $v_0$, in the sense of the weak
formulation. Thus $v\in\mathcal S^w(v_0)$ means that $v$ is weakly
divergence-free and satisfies
\begin{equation}
	\label{eq:weak-euler}
	\int_0^T\!\!\int_{\mathbb T^d}
	\Bigl(
	v\cdot \partial_t\varphi
	+
	(v\otimes v):\nabla\varphi
	\Bigr)\,dx\,dt
	+
	\int_{\mathbb T^d} v_0(x)\cdot\varphi(0,x)\,dx
	=0
\end{equation}
for every test field
$$
\varphi\in C_c^\infty([0,T)\times\mathbb T^d;\mathbb R^d),
\qquad
\operatorname{div}\varphi=0.
$$
We equip $\mathcal S^w(v_0)$ with the subspace diffeology induced by $E$,
and denote the inclusion by
$\iota:\mathcal S^w(v_0)\hookrightarrow E.$

We first establish the elementary smoothness properties of the ambient
functionals used below.

\begin{lemma}
	\label{lem:weak-observables}
	Let
	$\psi\in C_c^\infty([0,T)\times\mathbb T^d;\mathbb R^d),$
	$A\in C_c^\infty([0,T)\times\mathbb T^d;\mathbb R^{d\times d}).$
	Define
	$$
	F_\psi:E\to\mathbb R,
	\qquad
	F_\psi(u)
	:=
	\int_0^T\!\!\int_{\mathbb T^d}
	u\cdot\partial_t\psi\,dx\,dt,
	$$
	and
	$$
	G_A:E\to\mathbb R,
	\qquad
	G_A(u)
	:=
	\int_0^T\!\!\int_{\mathbb T^d}
	(u\otimes u):A\,dx\,dt.
	$$
	Then $F_\psi$ and $G_A$ are smooth. Moreover, for every $u,h\in E$,
	\begin{align}
		d(F_\psi)_u(h)
		&=
		\int_0^T\!\!\int_{\mathbb T^d}
		h\cdot\partial_t\psi\,dx\,dt,
		\label{eq:weak-dF}
		\\
		d(G_A)_u(h)
		&=
		\int_0^T\!\!\int_{\mathbb T^d}
		(u\otimes h+h\otimes u):A\,dx\,dt.
		\label{eq:weak-dG}
	\end{align}
\end{lemma}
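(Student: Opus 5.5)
The plan is to show that $F_\psi$ is a continuous linear functional on $E$ and that $G_A$ is the composite of a continuous linear functional with the quadratic map $qu$ of Example~\ref{exa:quad}. Smoothness and the differential formulas then follow from Proposition~\ref{prop:tangent-linear} and the chain rule, Proposition~\ref{prop:chain-rule}. All tangent spaces are identified with the underlying Banach spaces via Theorem~\ref{thm:convenient-tangent}.

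\textbf{The functional $F_\psi$.} It is clearly linear. Since $\partial_t\psi\in L^1(0,T;L^2(\mathbb T^d;\mathbb R^d))$, H\"older's inequality in $x$ and then in $t$ gives
$$
|F_\psi(u)|\le \|u\|_{L^\infty_tL^2_x}\,\|\partial_t\psi\|_{L^1_tL^2_x}.
$$
So $F_\psi$ is continuous, hence bounded, hence smooth for the canonical diffeologies. By the case $m=1$ of Proposition~\ref{prop:tangent-linear}, $d(F_\psi)_u=F_\psi$ under the identification $T_uE\simeq E$ and $T_{F_\psi(u)}\mathbb R\simeq\mathbb R$. This is \eqref{eq:weak-dF}.

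\textbf{The functional $G_A$.} Set $W:=L^\infty(0,T;L^1(\mathbb T^d;\mathbb R^{d\times d}))$ and $\ell_A(M):=\int_0^T\!\int_{\mathbb T^d}M:A\,dx\,dt$. Since $A$ is smooth and compactly supported, it lies in $L^1(0,T;L^\infty)$, and
$$
|\ell_A(M)|\le\|M\|_{L^\infty_tL^1_x}\,\|A\|_{L^1_tL^\infty_x}.
$$
Hence $\ell_A$ is continuous linear, so smooth, with $d(\ell_A)_M=\ell_A$ by Proposition~\ref{prop:tangent-linear}.

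Next apply Example~\ref{exa:quad} with $V=E$, taking the identity as the continuous linear injection. This gives that $qu:E\to W$, $qu(u)=u\otimes u$, is smooth with $d(qu)_u(h)=h\otimes u+u\otimes h$. Since $G_A=\ell_A\circ qu$, it is smooth as a composite of smooth maps. The chain rule then gives
$$
d(G_A)_u(h)=\ell_A(u\otimes h+h\otimes u),
$$
which is \eqref{eq:weak-dG}.

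\textbf{Main point to check.} The only delicate step is the compatibility of the identifications: the chain rule is applied on internal tangent spaces, so one must pass through the maps $\Phi$ at each stage. This works because Proposition~\ref{prop:tangent-linear} is stated exactly in terms of these identifications, so the diagrams commute. One also needs $W$ and $\mathbb R$ to be convenient; both are Banach spaces, so this holds.
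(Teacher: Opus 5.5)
Your proposal is correct and follows essentially the same route as the paper: $F_\psi$ is continuous linear, and $G_A$ is the continuous linear pairing with $A$ on $L^\infty_tL^1_x$ composed with the smooth quadratic map $qu$ of Example~\ref{exa:quad}. The only difference is cosmetic: you derive \eqref{eq:weak-dF} and \eqref{eq:weak-dG} from Proposition~\ref{prop:tangent-linear} and the chain rule, whereas the paper reads them off directly from the expansion $(u+sh)\otimes(u+sh)=u\otimes u+s(u\otimes h+h\otimes u)+s^2\,h\otimes h$, which is the same polynomial-curve computation that underlies Proposition~\ref{prop:tangent-linear}.
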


\begin{proof}
	The functional $F_\psi$ is continuous linear on $E$, hence smooth.
	By Example \ref{exa:quad}, $u\mapsto u\otimes u$ is a smooth quadratic map. Pairing
	against the fixed compactly supported test tensor $A$ is continuous
	linear on $L^\infty_tL^1_x$, and hence $G_A$ is smooth.
	The differential formulas follow from the expansion
	$$
	(u+sh)\otimes(u+sh)
	=
	u\otimes u
	+s(u\otimes h+h\otimes u)
	+s^2(h\otimes h).
	$$
\end{proof}

For each divergence-free test field
$$
\psi\in C_c^\infty([0,T)\times\mathbb T^d;\mathbb R^d),
\qquad
\operatorname{div}\psi=0,
$$
define
$\mathcal J_\psi:E\to\mathbb R$
by
$$
\mathcal J_\psi(u)
:=
\int_0^T\!\!\int_{\mathbb T^d}
\Bigl(
u\cdot\partial_t\psi
+
(u\otimes u):\nabla\psi
\Bigr)\,dx\,dt
+
\int_{\mathbb T^d}v_0(x)\cdot\psi(0,x)\,dx.
$$
By Lemma~\ref{lem:weak-observables}, $\mathcal J_\psi$ is smooth. Moreover,
by the weak Euler identity,
$$
\mathcal J_\psi|_{\mathcal S^w(v_0)}\equiv 0.
$$

We here linearize the incompressibility constraint.

\begin{proposition}
	\label{prop:weak-linearized-divergence}
	Let $v\in\mathcal S^w(v_0)$, let
	$\xi\in T_v\mathcal S^w(v_0)$, and let
	$w:=(\Phi_v^E)^{-1}\bigl(d\iota_v(\xi)\bigr)\in E$
	be its ambient realization. Then
	$\operatorname{div} w=0$
	in $\mathcal D'((0,T)\times\mathbb T^d)$.
\end{proposition}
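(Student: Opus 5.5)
We follow the same pattern as in the proof of Proposition~\ref{prop:lin-euler-smooth}: encode the constraint by smooth observables on $E$ that vanish identically on $\mathcal S^w(v_0)$, then differentiate.

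For each test function $\chi\in C_c^\infty((0,T)\times\mathbb T^d;\mathbb R)$, define
$$
D_\chi:E\to\mathbb R,\qquad D_\chi(u):=-\int_0^T\!\!\int_{\mathbb T^d}u\cdot\nabla\chi\,dx\,dt=\langle\operatorname{div}u,\chi\rangle .
$$
This is a continuous linear functional on $E$. Indeed, $\nabla\chi\in L^1(0,T;L^2)$, so $|D_\chi(u)|\le\|u\|_{L^\infty_tL^2_x}\|\nabla\chi\|_{L^1_tL^2_x}$. Hence $D_\chi$ is smooth for the canonical diffeology. By Proposition~\ref{prop:tangent-linear} with $m=1$, its internal tangent map at any point is identified with $D_\chi$ itself:
$$
d(D_\chi)_v\circ\Phi^E_v=\Phi^{\mathbb R}_{D_\chi(v)}\circ D_\chi .
$$

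Every $u\in\mathcal S^w(v_0)$ is weakly divergence-free, so $D_\chi\circ\iota\equiv0$ on $\mathcal S^w(v_0)$. This map is constant, so by Proposition~\ref{prop:const-vanish} its differential at $v$ vanishes. The chain rule (Proposition~\ref{prop:chain-rule}) then gives
$$
0=d(D_\chi\circ\iota)_v(\xi)=d(D_\chi)_v\bigl(d\iota_v(\xi)\bigr)=d(D_\chi)_v\bigl(\Phi^E_v(w)\bigr)=\Phi^{\mathbb R}_0\bigl(D_\chi(w)\bigr).
$$
Since $\Phi^{\mathbb R}_0$ is an isomorphism (Theorem~\ref{thm:convenient-tangent}), $D_\chi(w)=0$, that is, $\langle\operatorname{div}w,\chi\rangle=0$. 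This holds for every $\chi$, so $\operatorname{div}w=0$ in $\mathcal D'((0,T)\times\mathbb T^d)$.

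There is no real obstacle here. The only points needing care are:
\begin{itemize}
\item checking that $D_\chi$ is continuous on $L^\infty_tL^2_x$, which is the Hölder bound above;
\item keeping track of the identification of $T_0\mathbb R$ with $\mathbb R$ through $\Phi$, so that the vanishing of a tangent vector translates into the vanishing of the number $D_\chi(w)$.
\end{itemize}
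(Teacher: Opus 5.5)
Your proof is correct and follows the paper's argument essentially verbatim. The paper uses the continuous linear functional $H_\phi(u)=\int_0^T\!\int_{\mathbb T^d}u\cdot\nabla\phi\,dx\,dt$ (your $D_\chi$ up to sign), notes that it vanishes on $\mathcal S^w(v_0)$, and differentiates via the chain rule; your H\"older bound and your tracking of the identification through $\Phi^{\mathbb R}_0$ simply make explicit two points the paper leaves implicit.
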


\begin{proof}
	Let
	$\phi\in C_c^\infty((0,T)\times\mathbb T^d).$
	Define
	$$
	H_\phi:E\to\mathbb R,
	\qquad
	H_\phi(u)
	:=
	\int_0^T\!\!\int_{\mathbb T^d}
	u\cdot\nabla\phi\,dx\,dt.
	$$
	This is a continuous linear functional on $E$, hence smooth. Since every
	element of $\mathcal S^w(v_0)$ is weakly divergence-free,
	$H_\phi|_{\mathcal S^w(v_0)}\equiv 0.$
	Therefore, by the chain rule (Proposition \ref{prop:chain-rule}),
	$$
	0
	=
	d(H_\phi\circ\iota)_v(\xi)
	=
	d(H_\phi)_v\bigl(d\iota_v(\xi)\bigr).
	$$
	Since $d\iota_v(\xi)=\Phi_v^E(w)$, the canonical tangent identification
	gives
	$$
	d(H_\phi)_v\bigl(d\iota_v(\xi)\bigr)
	=
	\int_0^T\!\!\int_{\mathbb T^d}
	w\cdot\nabla\phi\,dx\,dt.
	$$
	Hence
	$$
	\int_0^T\!\!\int_{\mathbb T^d}
	w\cdot\nabla\phi\,dx\,dt=0
	$$
	for every $\phi\in C_c^\infty((0,T)\times\mathbb T^d)$. Therefore
	$\operatorname{div} w=0$
	in $\mathcal D'((0,T)\times\mathbb T^d)$.
\end{proof}

\begin{proposition}
	\label{prop:weak-linearized-divfree}
	Let $v\in\mathcal S^w(v_0)$, let
	$\xi\in T_v\mathcal S^w(v_0)$, and let
	$w:=(\Phi_v^E)^{-1}\bigl(d\iota_v(\xi)\bigr)\in E$
	be its ambient realization. Then for every
	$$
	\psi\in C_c^\infty([0,T)\times\mathbb T^d;\mathbb R^d),
	\qquad
	\operatorname{div}\psi=0,
	$$
	one has
	\begin{equation}
		\label{eq:weak-linearized-divfree}
		\int_0^T\!\!\int_{\mathbb T^d}
		\Bigl(
		w\cdot\partial_t\psi
		+
		(v\otimes w+w\otimes v):\nabla\psi
		\Bigr)\,dx\,dt
		=0.
	\end{equation}
\end{proposition}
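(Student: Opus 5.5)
The plan is to run the same argument as in Proposition~\ref{prop:weak-linearized-divergence}, but with the nonlinear functional $\mathcal J_\psi$ in place of the linear functional $H_\phi$. Fix a divergence-free test field $\psi$ as in the statement. By Lemma~\ref{lem:weak-observables}, the map $\mathcal J_\psi:E\to\mathbb R$ is smooth, because it splits as
$$
\mathcal J_\psi=F_\psi+G_{\nabla\psi}+c_\psi,
$$
with $c_\psi:=\int_{\mathbb T^d}v_0\cdot\psi(0,\cdot)\,dx$ a constant. Here $\nabla\psi$ is a compactly supported smooth test tensor, so $G_{\nabla\psi}$ is admissible in the lemma.

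Next I use that every element of $\mathcal S^w(v_0)$ satisfies \eqref{eq:weak-euler}. Hence $\mathcal J_\psi\circ\iota\equiv 0$ on $\mathcal S^w(v_0)$, and this composite is constant. By Proposition~\ref{prop:const-vanish} and the chain rule (Proposition~\ref{prop:chain-rule}),
$$
0=d(\mathcal J_\psi\circ\iota)_v(\xi)=d(\mathcal J_\psi)_v\bigl(d\iota_v(\xi)\bigr)=d(\mathcal J_\psi)_v\bigl(\Phi_v^E(w)\bigr).
$$

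It remains to evaluate $d(\mathcal J_\psi)_v\circ\Phi_v^E$ at $w$. The differential is additive over the splitting above. The constant $c_\psi$ contributes nothing, again by Proposition~\ref{prop:const-vanish}. The $F_\psi$ term contributes $\int\!\!\int w\cdot\partial_t\psi$ by \eqref{eq:weak-dF}. The $G_{\nabla\psi}$ term contributes $\int\!\!\int (v\otimes w+w\otimes v):\nabla\psi$ by \eqref{eq:weak-dG}. Adding these three contributions gives exactly \eqref{eq:weak-linearized-divfree}.

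The one point needing care is that formulas \eqref{eq:weak-dF}--\eqref{eq:weak-dG} are statements about the internal differential composed with $\Phi_u^E$. I would justify this step rather than take it for granted:
\begin{itemize}
\item For $F_\psi$, this is Proposition~\ref{prop:tangent-linear}, since $F_\psi$ is continuous linear.
\item For $G_{\nabla\psi}$, write it as the composite of the quadratic map $qu$ of Example~\ref{exa:quad} (taking $V=E$) with the continuous linear pairing against $\nabla\psi$ on $L^\infty_tL^1_x$. The chain rule, together with Proposition~\ref{prop:tangent-linear} and the formula $d(qu)_v(h)=h\otimes v+v\otimes h$, then gives \eqref{eq:weak-dG} under the identifications $\Phi^E$, $\Phi^W$ and $\Phi^{\mathbb R}$.
\end{itemize}
Everything else in the argument is formal.
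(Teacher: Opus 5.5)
Your proposal is correct and follows essentially the same route as the paper: the paper also uses $\mathcal J_\psi|_{\mathcal S^w(v_0)}\equiv 0$ and the chain rule to get $d(\mathcal J_\psi)_v(\Phi_v^E(w))=0$. It then drops the constant initial-data term and applies Lemma~\ref{lem:weak-observables} with $A=\nabla\psi$. Your extra justification of \eqref{eq:weak-dF}--\eqref{eq:weak-dG} through Proposition~\ref{prop:tangent-linear} and Example~\ref{exa:quad} is a harmless refinement of a step the paper takes directly from the lemma.
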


\begin{proof}
	For such a test field $\psi$, we have
	$\mathcal J_\psi|_{\mathcal S^w(v_0)}\equiv 0.$
	Hence, by the chain rule (Proposition \ref{prop:chain-rule}),
	$$
	0
	=
	d(\mathcal J_\psi\circ\iota)_v(\xi)
	=
	d(\mathcal J_\psi)_v\bigl(d\iota_v(\xi)\bigr)=
	d(\mathcal J_\psi)_v(\Phi_v^E(w)).
	$$
	The initial-data term in $\mathcal J_\psi$ is constant in $u$, hence its
	differential vanishes. Applying Lemma~\ref{lem:weak-observables} to the
	linear term with test field $\psi$ and to the quadratic term with
	$A=\nabla\psi$, we obtain
	$$
	0
	=
	\int_0^T\!\!\int_{\mathbb T^d}
	\Bigl(
	w\cdot\partial_t\psi
	+
	(v\otimes w+w\otimes v):\nabla\psi
	\Bigr)\,dx\,dt,
	$$
	which is exactly \eqref{eq:weak-linearized-divfree}.
\end{proof}

We use the following distributional Helmholtz--Hodge principle on the
torus.
\begin{lemma}
	\label{lem:distributional-hodge}
	Let
	$F\in \mathcal D'\bigl((0,T)\times\mathbb T^d;\mathbb R^d\bigr)$
	satisfy
	$\langle F,\psi\rangle=0$
	for every
	$\psi\in C_c^\infty((0,T)\times\mathbb T^d;\mathbb R^d),$
	$\operatorname{div}\psi=0.$
	Then there exists
	$\pi\in \mathcal D'\bigl((0,T)\times\mathbb T^d\bigr)$
	such that
	$F=\nabla\pi$
	in 
	$\mathcal D'\bigl((0,T)\times\mathbb T^d;\mathbb R^d\bigr).$
\end{lemma}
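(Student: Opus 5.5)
The plan is to work on the Fourier side in the spatial variable, fibrewise in time, and to build the potential $\pi$ mode by mode. Expand $F$ in spatial Fourier series with distributional-in-time coefficients: for $\theta\in C_c^\infty((0,T))$ and $k\in\mathbb Z^d$, set
$$
\widehat F_j(k)\in\mathcal D'((0,T)),\qquad
\langle \widehat F_j(k),\theta\rangle:=\langle F_j,\theta(t)e^{-ik\cdot x}\rangle,
$$
working with complex test functions by linearity. These coefficients grow at most polynomially in $k$, locally uniformly in $\theta$, because $F$ has finite order on each compact set $[a,b]\times\mathbb T^d$. Consequently $F=\sum_k\widehat F(k)\,e^{ik\cdot x}$ in $\mathcal D'$.

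Next I translate the hypothesis into pointwise constraints on each mode. Take test fields $\psi=\theta(t)\,a\,e^{ik\cdot x}$ with $a\in\mathbb C^d$ and $a\cdot k=0$; these are divergence-free. Taking real and imaginary parts, the hypothesis gives $a\cdot\widehat F(k)=0$ in $\mathcal D'((0,T))$ for every $a\perp k$.
\begin{itemize}
\item For $k=0$, every $a$ is admissible, so $\widehat F(0)=0$.
\item For $k\neq0$, we get $\widehat F(k)=k\,\widehat\beta(k)$ with
$$
\widehat\beta(k):=\frac{k\cdot\widehat F(k)}{|k|^2}.
$$
\end{itemize}
I then define $\widehat\pi(k):=-i\,\widehat\beta(k)$ for $k\neq0$ and $\widehat\pi(0):=0$. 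This gives $ik\,\widehat\pi(k)=\widehat F(k)$, which is formally $\nabla\pi=F$.

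The main obstacle is showing that $\pi:=\sum_{k}\widehat\pi(k)e^{ik\cdot x}$ really defines a distribution on $(0,T)\times\mathbb T^d$. Equivalently, the operator $\pi=\nabla\cdot(-\Delta)^{-1}_0 F$ must make sense on $\mathcal D'$.

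I would handle this with the same multiplier reasoning used in the proof of Proposition~\ref{prop:lin-euler-smooth}, transposed to distributions. Define $\langle\pi,\phi\rangle:=-\langle F,\nabla(-\Delta)^{-1}_0\phi\rangle$ for $\phi\in C_c^\infty((0,T)\times\mathbb T^d)$. The key property is that $(-\Delta)^{-1}_0$ maps $C_c^\infty((0,T)\times\mathbb T^d)$ continuously into itself. It acts fibrewise in $t$, so it preserves time support, and it preserves joint smoothness with Fréchet-continuous seminorm bounds. Hence $\phi\mapsto\nabla(-\Delta)^{-1}_0\phi$ is a continuous map of the test-function space, and $\pi$ is a well-defined distribution.

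It remains to verify $\nabla\pi=F$. For a vector test field $\psi$, write the Helmholtz decomposition
$$
\psi=\mathbb P\psi+\nabla(-\Delta)^{-1}_0(-\operatorname{div}\psi)+\bar\psi(t),
$$
where $\bar\psi(t)$ is the spatial mean. The Leray projection $\mathbb P\psi$ is a divergence-free test field, by the same multiplier continuity. The mean $\bar\psi(t)$ is also a divergence-free test field, namely constant in $x$.

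Now compute. Since $F$ annihilates both divergence-free pieces,
$$
\langle F,\psi\rangle=\langle F,\nabla(-\Delta)^{-1}_0(-\operatorname{div}\psi)\rangle .
$$
By the definition of $\pi$, this equals $\langle\pi,\operatorname{div}\psi\rangle=-\langle\pi,\operatorname{div}\psi\rangle\cdot(-1)$, and after sorting out signs this is exactly $\langle\nabla\pi,\psi\rangle$. The Fourier discussion above is then only heuristic motivation, and the rigorous proof is this duality argument. The only delicate point is that every operator involved preserves compact support in time and is continuous on test functions, and this holds because each multiplier acts pointwise in $t$.
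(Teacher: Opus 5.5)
Your duality argument is correct once a sign is fixed, and it takes a genuinely different route from the paper.

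\textbf{The sign.} With $\langle\pi,\phi\rangle:=-\langle F,\nabla(-\Delta)^{-1}_0\phi\rangle$, the Helmholtz identity gives $\langle F,\psi\rangle=\langle\pi,\operatorname{div}\psi\rangle=-\langle\nabla\pi,\psi\rangle$. So this $\pi$ satisfies $\nabla\pi=-F$, and the step ``after sorting out signs'' does not close. Drop the minus sign, i.e.\ set $\langle\pi,\phi\rangle:=\langle F,\nabla(-\Delta)^{-1}_0\phi\rangle$, equivalently $\pi=-\operatorname{div}(-\Delta)^{-1}_0F$. This agrees with your own Fourier formula $\widehat\pi(k)=-i|k|^{-2}k\cdot\widehat F(k)$. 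The operator $\nabla\cdot(-\Delta)^{-1}_0F$ you wrote has the opposite sign. Also, $\mathbb P$ must be the projection with multiplier $I-kk^{T}/|k|^2$ for $k\neq0$ and $0$ at $k=0$; otherwise the mean $\bar\psi(t)$ is counted twice.

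\textbf{Comparison with the paper.} The paper works on the Fourier side throughout. It bounds $\widehat F(k)$ polynomially in $k$ using the finite order of $F$ on $\operatorname{supp}\zeta\times\mathbb T^d$. It derives $\widehat F(0)=0$ and $\widehat F(k)\parallel k$ from the test fields $\zeta(t)\operatorname{Re}(a\,e^{ik\cdot x})$ with $a\perp k$. It then sums $\sum_k\widehat\pi(k)e^{ik\cdot x}$ in $\mathcal D'$, using the rapid decay of the Fourier coefficients of test functions. Finally it identifies $\nabla\pi=F$ by testing against $\zeta(t)e^{-ik\cdot x}$, which uses that such functions span a dense subspace of the test functions.

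Your transposition argument needs none of these distributional estimates. It reduces everything to two facts about smooth test fields. First, the fibrewise multipliers $(-\Delta)^{-1}_0$, $\nabla$ and $\mathbb P$ act continuously on $C_c^\infty((0,T)\times\mathbb T^d)$ and preserve time support. Second, the Helmholtz identity holds exactly for smooth $\psi$. In your version the two divergence-free pieces $\mathbb P\psi$ and $\bar\psi(t)$ play the roles of the paper's constraints $\widehat F(k)\parallel k$ and $\widehat F(0)=0$. Your route also yields $\pi$ as an explicit continuous linear image of $F$ with zero spatial mean. The paper's version, in return, makes the mode-by-mode structure of $F$ explicit, which is what your heuristic first half reproduces.
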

\begin{proof}
	For $k\in\mathbb Z^d$ define $\widehat F(k)\in\mathcal D'\bigl((0,T);\mathbb C^d\bigr)$ by
	$$
	\bigl\langle \widehat F_j(k),\zeta\bigr\rangle
	:=\bigl\langle F_j,\ \zeta(t)\,e^{-ik\cdot x}\bigr\rangle,
	\qquad \zeta\in C_c^\infty(0,T),\quad 1\le j\le d .
	$$
	Since $F$ has finite order on a neighbourhood of $\operatorname{supp}\zeta\times\mathbb T^d$,
	there are $C_\zeta,N$ with
	$$|\langle\widehat F_j(k),\zeta\rangle|\le C_\zeta(1+|k|)^{N},$$ so the Fourier
	coefficients grow at most polynomially.
	Fix $k$ and $a\in\mathbb C^d$ with $a\cdot k=0$. The field
	$\psi(t,x)=\zeta(t)\operatorname{Re}\bigl(a\,e^{ik\cdot x}\bigr)$ lies in
	$C_c^\infty((0,T)\times\mathbb T^d;\mathbb R^d)$ and satisfies
	$$\operatorname{div}\psi=\zeta\operatorname{Re}\bigl(i(a\cdot k)e^{ik\cdot x}\bigr)=0;$$
	the same holds with $\operatorname{Im}$ in place of $\operatorname{Re}$. The hypothesis
	therefore gives $a\cdot\widehat F(k)=0$ for every $a\perp k$. Consequently
	$\widehat F(0)=0$, and for $k\neq0$ the vector $\widehat F(k)$ is a multiple of $k$.
	Set $\widehat\pi(0):=0$ and
	$\widehat\pi(k):=-i|k|^{-2}\,k\cdot\widehat F(k)$ for $k\neq0$, so that
	$ik\,\widehat\pi(k)=\widehat F(k)$ for all $k$. Because
	$|\widehat\pi(k)|\lesssim|k|^{-1}|\widehat F(k)|$ still grows polynomially, the series
	$$
	\pi:=\sum_{k\in\mathbb Z^d}\widehat\pi(k)\,e^{ik\cdot x}
	$$
	converges in $\mathcal D'\bigl((0,T)\times\mathbb T^d\bigr)$: for
	$\phi\in C_c^\infty((0,T)\times\mathbb T^d)$ the coefficients
	$\phi_{-k}(t)=\int_{\mathbb T^d}\phi(t,x)e^{ik\cdot x}\,dx$ decay rapidly in every
	$C^m_t$ seminorm. Testing $F$ and $\nabla\pi$ against $\zeta(t)e^{-ik\cdot x}$ for all
	$k$ and $\zeta$ gives $F=\nabla\pi$ in
	$\mathcal D'\bigl((0,T)\times\mathbb T^d;\mathbb R^d\bigr)$, where $\nabla$ is the
	spatial gradient. 
\end{proof}

\begin{theorem}
	\label{thm:weak-distributional-linearization}
	Let
	$v\in\mathcal S^w(v_0),$
	$\xi\in T_v\mathcal S^w(v_0),$
	and let
	$w:=(\Phi_v^E)^{-1}\bigl(d\iota_v(\xi)\bigr)\in E$
	be the ambient realization of $\xi$. Then
	$\operatorname{div} w=0$
	in $\mathcal D'((0,T)\times\mathbb T^d)$, and there exists
	$q\in\mathcal D'((0,T)\times\mathbb T^d)$
	such that
	\begin{equation}
		\label{eq:weak-distributional-linearized-euler}
		\partial_t w
		+
		\operatorname{div}(v\otimes w+w\otimes v)
		+
		\nabla q
		=
		0
	\end{equation}
	holds in
	$\mathcal D'((0,T)\times\mathbb T^d;\mathbb R^d).$
\end{theorem}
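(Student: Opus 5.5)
The theorem follows by combining the two preceding propositions with the distributional Helmholtz--Hodge lemma. The divergence statement $\operatorname{div} w=0$ in $\mathcal D'((0,T)\times\mathbb T^d)$ is exactly Proposition~\ref{prop:weak-linearized-divergence}, so nothing new is needed there.

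For the momentum equation, I would define the distribution
$$
F:=\partial_t w+\operatorname{div}(v\otimes w+w\otimes v)\in\mathcal D'\bigl((0,T)\times\mathbb T^d;\mathbb R^d\bigr).
$$
This is well defined because $w\in E$ and $v\otimes w+w\otimes v\in L^\infty_tL^1_x\subset L^1_{\mathrm{loc}}$. By the sign conventions fixed after \eqref{eq:euler}, for $\psi\in C_c^\infty((0,T)\times\mathbb T^d;\mathbb R^d)$ we have
$$
\langle F,\psi\rangle=-\int_0^T\!\!\int_{\mathbb T^d}\Bigl(w\cdot\partial_t\psi+(v\otimes w+w\otimes v):\nabla\psi\Bigr)\,dx\,dt .
$$
A test field compactly supported in $(0,T)\times\mathbb T^d$ is in particular an element of $C_c^\infty([0,T)\times\mathbb T^d;\mathbb R^d)$, and it vanishes at $t=0$. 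Hence Proposition~\ref{prop:weak-linearized-divfree} gives $\langle F,\psi\rangle=0$ for every such $\psi$ with $\operatorname{div}\psi=0$. The only point to verify is that restricting from $[0,T)$ to open-interval supports is legitimate, and it is, since the class of admissible test fields only shrinks.

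Lemma~\ref{lem:distributional-hodge} then produces $\pi\in\mathcal D'((0,T)\times\mathbb T^d)$ with $F=\nabla\pi$. Setting $q:=-\pi$ gives \eqref{eq:weak-distributional-linearized-euler}.

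The only real obstacle would be the Hodge step, which is already handled by Lemma~\ref{lem:distributional-hodge}. Everything else is bookkeeping of signs in the distributional pairing, together with the transpose identity $M^T:A=M:A^T$. That identity is not actually needed here, because $v\otimes w+w\otimes v$ is symmetric.
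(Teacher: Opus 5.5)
Your proposal is correct and follows the paper's proof essentially verbatim. You take the divergence constraint from Proposition~\ref{prop:weak-linearized-divergence}, show via Proposition~\ref{prop:weak-linearized-divfree} that $F=\partial_t w+\operatorname{div}(v\otimes w+w\otimes v)$ annihilates divergence-free test fields supported in the open cylinder, and apply Lemma~\ref{lem:distributional-hodge} to get $F=\nabla\pi$ with $q:=-\pi$.
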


\begin{proof}
	The divergence constraint follows from
	Proposition~\ref{prop:weak-linearized-divergence}.

	Define $F\in\mathcal{D}'\bigl((0,T)\times\mathbb{T}^d;\mathbb{R}^d\bigr)$ by
	$$
	\langle F,\psi\rangle
	:= -\int_0^T\!\!\int_{\mathbb{T}^d} w\cdot\partial_t\psi\,\mathrm{d}x\,\mathrm{d}t
	-\int_0^T\!\!\int_{\mathbb{T}^d}
	\bigl(v\otimes w + w\otimes v\bigr):\nabla\psi\,\mathrm{d}x\,\mathrm{d}t,
	\qquad \psi\in C_c^\infty\bigl((0,T)\times\mathbb{T}^d;\mathbb{R}^d\bigr).
	$$
	This is well defined: since $w\in L^\infty_tL^2_x$ and $\psi$ is smooth with compact
	support, the first term is finite; and since $v,w\in L^2_x$, Cauchy--Schwarz gives
	$v\otimes w + w\otimes v\in L^\infty_tL^1_x$, so the second term is finite as well.
	By construction, $F = \partial_t w + \operatorname{div}(v\otimes w + w\otimes v)$ in
	$\mathcal{D}'$.
	Let $\psi\in C_c^\infty\bigl((0,T)\times\mathbb{T}^d;\mathbb{R}^d\bigr)$ be divergence-free,
	$\operatorname{div}\psi = 0$. Since $\operatorname{supp}\psi$ is compact in the open set
	$(0,T)\times\mathbb{T}^d$, we have the inclusion
	$$
	C_c^\infty\bigl((0,T)\times\mathbb{T}^d\bigr)\subseteq
	C_c^\infty\bigl([0,T)\times\mathbb{T}^d\bigr),
	$$
	so $\psi$ is an admissible test field for the weak linearized formulation. Hence
	Proposition~\ref{prop:weak-linearized-divfree} yields $\langle F,\psi\rangle = 0$.
	Thus $F$ annihilates every divergence-free test field. By Lemma~\ref{lem:distributional-hodge}, there exists
	$\pi\in\mathcal{D}'\bigl((0,T)\times\mathbb{T}^d\bigr)$ with $F = \nabla\pi$. Setting
	$q := -\pi$ gives
	$$
	\partial_t w + \operatorname{div}\bigl(v\otimes w + w\otimes v\bigr) + \nabla q
	= 0
	\quad\text{in }\mathcal{D}'\bigl((0,T)\times\mathbb{T}^d;\mathbb{R}^d\bigr),
	$$
	which is the claim.
\end{proof}
 
	Since $E=L^\infty\bigl(0,T;L^2(\mathbb T^d;\mathbb R^d)\bigr)$ carries no canonical trace at $t=0$, the condition
	$w(0,\cdot)=0$ of Proposition~\ref{prop:lin-euler-smooth} has no pointwise
	counterpart here. Nevertheless, \eqref{eq:weak-linearized-divfree} holds for
	every divergence-free
	$\psi\in C_c^\infty([0,T)\times\mathbb T^d;\mathbb R^d)$, in particular for
	$\psi$ with $\psi(0,\cdot)\neq0$, and without any initial term, because
	the datum $v_0$ enters $\mathcal J_\psi$ as an additive constant. This is
	precisely the weak formulation of the linearized Euler system with vanishing
	initial datum. Upgrading it to $w(0,\cdot)=0$ in $L^2(\mathbb T^d;\mathbb R^d)$ requires extra time
	regularity, e.g.\ weak continuity $w\in C_w([0,T];L^2(\mathbb T^d;\mathbb R^d))$.
 
\subsection{Energy constraints and admissible directions}

We impose energy constraints on the diffeological space of weak solutions. Let
$H:=L^2(\mathbb T^d;\mathbb R^d)$ be the instantaneous velocity space and $E:=L^\infty(0,T;H)$ be the ambient
trajectory space. The space $E$ is a Banach space equipped with its
canonical diffeology. Its internal tangent spaces are canonically identified
with $E$ by
Theorem~\ref{thm:convenient-tangent}.
Let $X\subseteq E$ be the diffeological subspace of weak solutions under
consideration. When the initial datum is fixed, one may take
$X=\mathcal S^w(v_0).$

\subsubsection{Energy observables and prescribed profiles}

The instantaneous kinetic-energy observable is
$$
\mathcal E:H\longrightarrow\mathbb R,
\qquad
\mathcal E(u):=\frac12\int_{\mathbb T^d}|u(x)|^2\,dx.
$$
It is smooth, with differential
$$
d\mathcal E_u(h)
=
\int_{\mathbb T^d}u(x)\cdot h(x)\,dx,
\qquad h\in H.
$$
At the trajectory level, define
$\mathcal E_{\mathrm{prof}}:E\longrightarrow L^\infty(0,T)$
by
$$
\mathcal E_{\mathrm{prof}}(v)(t)
:=
\mathcal E(v(t))
=
\frac12\int_{\mathbb T^d}|v(t,x)|^2\,dx
$$
for almost every $t\in(0,T)$. This map is a continuous quadratic map, hence smooth. Indeed,
$\mathcal E_{\mathrm{prof}}(v)=\frac12 B(v,v)$, where
$$
B(v,w)(t):=\int_{\mathbb T^d}v(t,x)\cdot w(t,x)\,dx
$$
defines a continuous bilinear map
$B:E\times E\to L^\infty(0,T).$

If $X$ is restricted to a class of trajectories whose energy profiles are
smooth in time, then the restriction of $\mathcal E_{\mathrm{prof}}$ may
instead be viewed as a map with values in $C^\infty([0,T])$, after
specifying the corresponding diffeology on that target. In the general weak
setting, the natural target is $L^\infty(0,T)$.

Let $e\in L^\infty(0,T)$ be a prescribed energy profile. Define the
energy-constrained solution space by
$$
X_{\mathcal E}(e)
:=
\bigl\{v\in X\mid \mathcal E_{\mathrm{prof}}(v)=e
\text{ in }L^\infty(0,T)\bigr\}.
$$
Equivalently,
\begin{equation}\label{eq:energy-profile}
	\frac12\int_{\mathbb T^d}|v(t,x)|^2\,dx=e(t)
	\qquad\text{for a.e. }t\in(0,T).
\end{equation}
The space $X_{\mathcal E}(e)$ is equipped with the subspace diffeology
inherited from $X$.

For every $\zeta\in C_c^\infty(0,T)$, define
$$
J_\zeta:E\longrightarrow\mathbb R,
\qquad
J_\zeta(v)
:=
\int_0^T\zeta(t)\,\mathcal E(v(t))\,dt.
$$
The map $J_\zeta$ is smooth and its differential is given by
	\begin{equation}\label{eq:dJ}
		d(J_\zeta)_v(w)
		=
		\int_0^T\zeta(t)
		\left(\int_{\mathbb T^d}v(t,x)\cdot w(t,x)\,dx\right)dt .
	\end{equation}
The profile constraint \eqref{eq:energy-profile} is equivalent to the family
of scalar constraints
\begin{equation}\label{eq:scalar-energy-constraint}
	J_\zeta(v)
	=
	\int_0^T\zeta(t)e(t)\,dt
	\qquad
	\text{for every }\zeta\in C_c^\infty(0,T).
\end{equation}
Indeed, the difference
$
\mathcal E_{\mathrm{prof}}(v)-e\in L^\infty(0,T)
$
vanishes if and only if its pairing with every
$\zeta\in C_c^\infty(0,T)$ vanishes.

\subsubsection{Infinitesimal admissibility}

Let $v\in X_{\mathcal E}(e)$ and let
$\xi\in T_vX_{\mathcal E}(e)$
be an arbitrary internal tangent vector. 
Let
$\jmath_e:X_{\mathcal E}(e)\hookrightarrow X$
denote the inclusion and set
$\xi_X:=d(\jmath_e)_v(\xi)\in T_vX.$
Writing
$\iota_X:X\hookrightarrow E$
for the ambient inclusion, define the ambient realization of $\xi$ by
\begin{equation}\label{eq:ambient-realization}
	w
	:=
	(\Phi_v^E)^{-1}
	\bigl(d(\iota_X)_v(\xi_X)\bigr)
	\in E.
\end{equation}

\begin{proposition} 
	\label{prop:orthogonality}
	Let $v\in X_{\mathcal E}(e)$ and
	$\xi\in T_vX_{\mathcal E}(e)$, and let $w\in E$ be its ambient
	realization defined by \eqref{eq:ambient-realization}. Then
	\begin{equation}\label{eq:energy-orthogonality-test}
		d(J_\zeta)_v(w)=0
		\qquad
		\text{for every }\zeta\in C_c^\infty(0,T).
	\end{equation}
	Equivalently,
		$$\int_{\mathbb T^d}v(t,x)\cdot w(t,x)\,dx=0
		\qquad\text{for a.e. }t\in(0,T).$$
\end{proposition}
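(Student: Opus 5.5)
The argument follows the pattern of Propositions~\ref{prop:weak-linearized-divergence} and~\ref{prop:weak-linearized-divfree}: a smooth ambient observable that is constant on the constrained subspace has vanishing differential there, and we read off the resulting identity through the canonical identification of Theorem~\ref{thm:convenient-tangent}. Fix $\zeta\in C_c^\infty(0,T)$ and consider the composite
$$
J_\zeta\circ\iota_X\circ\jmath_e:X_{\mathcal E}(e)\longrightarrow\mathbb R .
$$
By \eqref{eq:scalar-energy-constraint}, this map equals the constant $\int_0^T\zeta e\,dt$. Proposition~\ref{prop:const-vanish} then gives that its internal tangent map at $v$ vanishes. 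The chain rule (Proposition~\ref{prop:chain-rule}) factors this as
$$
0=d(J_\zeta)_v\bigl(d(\iota_X)_v(d(\jmath_e)_v\xi)\bigr)=d(J_\zeta)_v\bigl(\Phi^E_v(w)\bigr),
$$
where the last equality uses \eqref{eq:ambient-realization}.

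The remaining task is to identify $d(J_\zeta)_v(\Phi^E_v(w))$ with the integral in \eqref{eq:dJ}, which I regard as the main point to justify. Write $J_\zeta=\ell_\zeta\circ\tfrac12 B\circ\Delta$, where $\ell_\zeta(f)=\int_0^T\zeta f\,dt$ is continuous linear on $L^\infty(0,T)$ and $B:E\times E\to L^\infty(0,T)$ is the continuous bilinear map introduced above. Boundedness of $B$ follows from Cauchy--Schwarz: $\|B(v,w)\|_{L^\infty}\le\|v\|_E\|w\|_E$. Proposition~\ref{prop:tangent-linear}, together with the chain rule exactly as in Example~\ref{exa:quad}, then gives
$$
d(J_\zeta)_v(\Phi^E_v(w))=\ell_\zeta\bigl(\tfrac12(B(v,w)+B(w,v))\bigr)=\int_0^T\zeta(t)\int_{\mathbb T^d}v\cdot w\,dx\,dt .
$$
This proves \eqref{eq:energy-orthogonality-test} for every $\zeta$.

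For the equivalent pointwise form, set $g(t):=\int_{\mathbb T^d}v(t)\cdot w(t)\,dx$. By the Cauchy--Schwarz bound above, $g\in L^\infty(0,T)\subset L^1_{\mathrm{loc}}(0,T)$. The identity $\int_0^T\zeta g\,dt=0$ for all $\zeta\in C_c^\infty(0,T)$ forces $g=0$ almost everywhere by the fundamental lemma of the calculus of variations. Conversely, if $g=0$ almost everywhere, then every pairing $\int_0^T\zeta g\,dt$ vanishes, which gives the equivalence.

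A minor technical point is measurability of $t\mapsto v(t)\cdot w(t)$ for Bochner-measurable elements of $L^\infty(0,T;H)$. This follows from continuity of the inner product on $H$ applied to strongly measurable functions.
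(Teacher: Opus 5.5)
Your proposal is correct and follows the paper's proof: $J_\zeta$ is constant on $X_{\mathcal E}(e)$, the chain rule through $\jmath_e$ and $\iota_X$ transfers this to the ambient realization, you substitute \eqref{eq:dJ}, and you conclude by the fundamental lemma, using that the inner integral lies in $L^\infty(0,T)$. Your derivation of \eqref{eq:dJ} from Proposition~\ref{prop:tangent-linear} via $J_\zeta=\ell_\zeta\circ\tfrac12 B\circ\Delta$, and your check of the converse direction of the equivalence, fill in details that the paper only asserts.
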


\begin{proof}
	By \eqref{eq:scalar-energy-constraint}, the restriction of $J_\zeta$ to
	$X_{\mathcal E}(e)$ is constant:
	$$
	J_\zeta|_{X_{\mathcal E}(e)}
	\equiv
	\int_0^T\zeta(t)e(t)\,dt.
	$$
	Hence
	$d\bigl(J_\zeta|_{X_{\mathcal E}(e)}\bigr)_v(\xi)=0.$
	By the chain rule (Proposition \ref{prop:chain-rule}),
	$$
	d\bigl(J_\zeta|_{X_{\mathcal E}(e)}\bigr)_v
	=
	d(J_\zeta)_v
	\circ d(\iota_X)_v
	\circ d(\jmath_e)_v.
	$$
	Using the definition of $w$, this gives
	$d(J_\zeta)_v(w)=0,$
	which proves \eqref{eq:energy-orthogonality-test}. Substituting
	\eqref{eq:dJ}, we obtain
	$$
	\int_0^T\zeta(t)
	\left(
	\int_{\mathbb T^d}v(t,x)\cdot w(t,x)\,dx
	\right)dt=0
	$$
	for every $\zeta\in C_c^\infty(0,T)$. The inner integral belongs to
	$L^\infty(0,T)$, by the Cauchy--Schwarz inequality. Hence, since this pairing vanishes for every
	$\zeta\in C_c^\infty(0,T)$, it follows that
	$$
	\int_{\mathbb T^d}v(t,x)\cdot w(t,x)\,dx=0
	\qquad\text{for a.e. }t\in(0,T).
	$$
\end{proof}

\subsubsection{Compatibility with the linearized Euler equation}

Suppose now that
$X=\mathcal S^w(v_0)$
is the weak Euler solution stratum introduced in
Section~\ref{sec:distributional-linearization}. Thus
$$
X_{\mathcal E}(e)
=
\{v\in\mathcal S^w(v_0)\mid\mathcal E_{\mathrm{prof}}(v)=e\}.
$$

\begin{theorem} 
	\label{thm:admissible}
	Suppose $X=\mathcal S^w(v_0)$. Let
	$v\in X_{\mathcal E}(e)$ and
	$\xi\in T_vX_{\mathcal E}(e)$. Let $w\in E$ be the ambient realization
	of $\xi$ defined by \eqref{eq:ambient-realization}. Then:
	\begin{enumerate}
		\item[(a)] $w$ is distributionally divergence-free:
		\begin{equation}\label{eq:linearized-divergence}
			\operatorname{div}w=0
			\quad\text{in }
			\mathcal D'((0,T)\times\mathbb T^d);
		\end{equation}
		
		\item[(b)] there exists
		$q\in
		\mathcal D'((0,T)\times\mathbb T^d)$
		such that
		\begin{equation}\label{eq:linearized-euler-pressure}
			\partial_tw
			+
			\operatorname{div}
			\bigl(v\otimes w+w\otimes v\bigr)
			+
			\nabla q
			=0
		\end{equation}
		in
		$\mathcal D'((0,T)\times\mathbb T^d;\mathbb R^d)$;
		
		\item[(c)] $w$ satisfies the linearized energy constraint:
		\begin{equation}\label{eq:linearized-energy}
			\int_{\mathbb T^d}v(t,x)\cdot w(t,x)\,dx=0
			\qquad\text{for a.e. }t\in(0,T).
		\end{equation}
	\end{enumerate}
\end{theorem}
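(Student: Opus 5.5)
The plan is to reduce (a) and (b) to Theorem~\ref{thm:weak-distributional-linearization} through functoriality of internal tangent maps, and to read (c) off Proposition~\ref{prop:orthogonality}. The only real work is bookkeeping: checking that the ambient realization in \eqref{eq:ambient-realization}, taken along $\iota_X\circ\jmath_e$, agrees with the realization of the pushed-forward vector $\xi_X=d(\jmath_e)_v(\xi)$ in $T_v\mathcal S^w(v_0)$.

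First I set $\xi_X:=d(\jmath_e)_v(\xi)\in T_v\mathcal S^w(v_0)$. This is legitimate because $X_{\mathcal E}(e)\subseteq\mathcal S^w(v_0)$ carries the subspace diffeology, so the inclusion $\jmath_e$ is smooth. By definition \eqref{eq:ambient-realization}, $w=(\Phi_v^E)^{-1}\bigl(d(\iota_X)_v(\xi_X)\bigr)$ with $\iota_X=\iota$. This is exactly the ambient realization of $\xi_X$ in the sense of Theorem~\ref{thm:weak-distributional-linearization}, so that theorem applies verbatim to the pair $(v,\xi_X)$. It gives $\operatorname{div}w=0$ in $\mathcal D'$, which is (a). It also produces $q\in\mathcal D'((0,T)\times\mathbb T^d)$ satisfying \eqref{eq:linearized-euler-pressure}, which is (b).

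For (c) I invoke Proposition~\ref{prop:orthogonality}. Its hypotheses are met, since $X=\mathcal S^w(v_0)$ is a diffeological subspace of $E$ and $w$ is defined by the same formula \eqref{eq:ambient-realization}. The proposition yields $d(J_\zeta)_v(w)=0$ for every $\zeta\in C_c^\infty(0,T)$, and hence \eqref{eq:linearized-energy} for a.e. $t$. Two facts justify passing from the first statement to the second. By \eqref{eq:dJ} and Cauchy--Schwarz, $t\mapsto\int v\cdot w\,dx$ lies in $L^\infty(0,T)$. The fundamental lemma of the calculus of variations then applies to this function.

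I expect no genuine obstacle. The one point to state carefully is the chain rule (Proposition~\ref{prop:chain-rule}) applied to $\iota\circ\jmath_e$. This guarantees that the same $w$ is used in parts (a)–(c), so that a single tangent direction satisfies the linearized Euler system and the linearized energy constraint simultaneously.
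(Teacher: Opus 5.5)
Your proposal is correct and matches the paper's proof: both push $\xi$ forward to $\xi_X=d(\jmath_e)_v(\xi)\in T_v\mathcal S^w(v_0)$, obtain (a) and (b) from Theorem~\ref{thm:weak-distributional-linearization}, and obtain (c) from Proposition~\ref{prop:orthogonality}. The chain-rule remark is harmless but not needed, since \eqref{eq:ambient-realization} already defines $w$ as the ambient realization of $\xi_X$.
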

\begin{proof}
	The inclusion
	$\jmath_e:X_{\mathcal E}(e)\hookrightarrow \mathcal S^w(v_0)$
	is smooth by the definition of the subspace diffeology. Hence
	$\xi_X:=d(\jmath_e)_v(\xi)\in T_v\mathcal S^w(v_0).$
	By Theorem~\ref{thm:weak-distributional-linearization}, applied to
	$\xi_X$, the ambient realization $w$ satisfies
	$\operatorname{div}w=0$
	in 
	$\mathcal D'((0,T)\times\mathbb T^d),$
	and there exists
	$q\in\mathcal D'((0,T)\times\mathbb T^d)$
	such that
	$$\partial_tw
	+
	\operatorname{div}(v\otimes w+w\otimes v)
	+
	\nabla q
	=0$$
	in
	$\mathcal D'((0,T)\times\mathbb T^d;\mathbb R^d).$
	Finally, the linearized energy constraint follows from
	Proposition~\ref{prop:orthogonality}.
\end{proof}

\section{Smooth subsolutions, the lifted limit space, and the Euler locus}
\label{sec:lifted-limit-space}
This section constructs the two diffeological models used in the rest of the
paper and compares them along the locus where the flux constraint is exact.
\subsection{Smooth Euler--Reynolds subsolutions}
\label{subsec:ER-subsolutions}
We work in the variables $(v,p,R)$, where $v$ is the velocity, $p$ the
pressure, and $R$ the symmetric defect tensor, and recall
the notations
$$
\Sym(d):=\{A\in\mathbb R^{d\times d}\mid A^{T}=A\},
\qquad
\mathcal S^d_0:=\{A\in\Sym(d)\mid \operatorname{tr}A=0\},
\qquad
A^{\circ}:=A-\tfrac1d(\operatorname{tr}A)\,\mathrm{Id}.
$$
Throughout we normalize the torus so that $|\mathbb T^d|=1$, and
$[0,T]\times\mathbb T^d$ is compact, with temporal boundary
$\{0,T\}\times\mathbb T^d$.

Define the smooth ambient space
$$
\mathcal Z_{\mathrm{ER}}
:=
C^\infty([0,T]\times\mathbb T^d;\mathbb R^d)
\times
C^\infty([0,T]\times\mathbb T^d)
\times
C^\infty([0,T]\times\mathbb T^d;\mathcal S^d_0),
$$
equipped with its product diffeology; elements are denoted
$z=(v,p,R)$. Each factor is a Fréchet, hence a convenient vector space, and so
is $\mathcal Z_{\mathrm{ER}}$. The Euler--Reynolds system reads
\begin{equation}
	\label{eq:ER-vpr}
	\partial_t v+\operatorname{div}(v\otimes v)+\nabla p
	=-\operatorname{div}R,
	\qquad
	\operatorname{div}v=0 .
\end{equation}

\begin{lemma}
	\label{lem:ER-operator-smooth}
	The map
	$$
	\mathcal F\colon
	\mathcal Z_{\mathrm{ER}}
	\longrightarrow
	C^\infty([0,T]\times\mathbb T^d;\mathbb R^d)
	\times C^\infty([0,T]\times\mathbb T^d),
	\qquad
	\mathcal F(v,p,R)
	=
	\bigl(
	\partial_t v+\operatorname{div}(v\otimes v)+\nabla p+\operatorname{div}R,
	\ \operatorname{div}v
	\bigr),
	$$
	is smooth with differential
	$$
	d\mathcal F_{(v,p,R)}(w,q,S)
	=
	\bigl(
	\partial_t w+\operatorname{div}(v\otimes w+w\otimes v)+\nabla q
	+\operatorname{div}S,
	\ \operatorname{div}w
	\bigr).
	$$
\end{lemma}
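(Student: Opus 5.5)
The plan is to decompose $\mathcal F$ into a continuous linear part and a continuous quadratic part, and then reuse the machinery already set up in the proof of Proposition~\ref{prop:lin-euler-smooth}. Write
$$
\mathcal F=\mathcal L+\mathcal Q\circ\pi_v,
\qquad
\mathcal L(v,p,R):=\bigl(\partial_t v+\nabla p+\operatorname{div}R,\ \operatorname{div}v\bigr),
\qquad
\mathcal Q(v):=\bigl(\mathbf B(v,v),0\bigr),
$$
where $\pi_v\colon\mathcal Z_{\mathrm{ER}}\to C^\infty([0,T]\times\mathbb T^d;\mathbb R^d)$ is the projection onto the velocity factor and $\mathbf B(u_1,u_2)=\operatorname{div}(u_1\otimes u_2)$ is as in that proof.

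First, I will check that $\mathcal L$ is continuous linear between Fr\'echet spaces. The operators $\partial_t$, $\nabla$ and $\operatorname{div}$ on $C^\infty([0,T]\times\mathbb T^d)$-valued fields are each bounded by a higher $C^k$ seminorm, so this is immediate. Since the spaces are convenient and continuous linear maps are bounded, $\mathcal L$ is smooth for the canonical diffeologies. By Proposition~\ref{prop:tangent-linear} with $m=1$, its differential is $\mathcal L$ itself. The projection $\pi_v$ is likewise continuous linear, so it is smooth and $d(\pi_v)_z=\pi_v$.

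Second, the bilinear map $\mathbf B$ is continuous. Pointwise tensor multiplication is jointly continuous on $C^\infty$ by the Leibniz rule on seminorms, and $\operatorname{div}$ is continuous. So, exactly as in Example~\ref{exa:quad}, composing with the diagonal shows that $u\mapsto\mathbf B(u,u)$ is smooth. Proposition~\ref{prop:tangent-linear} together with the chain rule (Proposition~\ref{prop:chain-rule}) then gives the differential $w\mapsto\mathbf B(v,w)+\mathbf B(w,v)=\operatorname{div}(v\otimes w+w\otimes v)$. Composing with $\pi_v$ yields the smoothness of $\mathcal Q\circ\pi_v$, whose differential sends $(w,q,S)$ to $\bigl(\operatorname{div}(v\otimes w+w\otimes v),0\bigr)$.

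Third, I will show that sums of smooth maps into a convenient target are smooth, with additive differentials. Addition $F\times F\to F$ is continuous linear, and the product diffeology agrees with the canonical one by \cite[Proposition~3.11]{ADM2026}. Applying Proposition~\ref{prop:tangent-linear} to addition, together with the chain rule, shows that $d(f+g)_z=df_z+dg_z$ under the identifications of Theorem~\ref{thm:convenient-tangent}. The target is itself a product of two Fr\'echet spaces, so the product-diffeology identification is needed there too. Adding the two differentials gives the stated formula.

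The only delicate point is the continuity of pointwise multiplication on $C^\infty([0,T]\times\mathbb T^d)$, where the domain has boundary in $t$. This reduces to the estimate $\|fg\|_{C^k}\le C_k\|f\|_{C^k}\|g\|_{C^k}$, which holds on the compact manifold with boundary. Everything else is formal.
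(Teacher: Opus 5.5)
Your proposal is correct and follows the same route as the paper: you split $\mathcal F$ into the continuous linear part (built from $\partial_t$, $\nabla$, $\operatorname{div}$) and the continuous quadratic part $v\mapsto\operatorname{div}(v\otimes v)$, and you read off the differential by polarization. Your version also makes explicit two steps the paper leaves implicit: the Leibniz estimate for pointwise multiplication on $[0,T]\times\mathbb T^d$, and the additivity of differentials via the addition map and the product-diffeology identification.
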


\begin{proof}
	The operators $\partial_t$, $\nabla$ and $\operatorname{div}$ are
	continuous linear between the convenient vector spaces, hence
	smooth. The map $(v,w)\mapsto\operatorname{div}(v\otimes w)$ is smooth as well. The sum of these maps is smooth with differential given by
	polarization of the quadratic part; this yields the stated formula.
\end{proof}

\begin{remark}
	\label{rem:trace-free-normalization}
	Taking the defect to be trace-free is no loss of generality. For
	$R\in C^\infty([0,T]\times\mathbb T^d;\Sym(d))$ one has
	$$
	\operatorname{div}R=\operatorname{div}R^{\circ}
	+\tfrac1d\nabla(\operatorname{tr}R),
	$$
	so \eqref{eq:ER-vpr} holds for $(v,p,R)$ if and only if it holds for
	$(v,\tilde p,R^{\circ})$, where
	$$
	\tilde p
	:=
	p+\frac1d
	\Bigl(
	\operatorname{tr}R-\int_{\mathbb T^d}\operatorname{tr}R(t,\cdot)\,dx
	\Bigr).
	$$
	Subtracting the spatial mean is what preserves the normalization
	$\int_{\mathbb T^d}\tilde p(t,\cdot)\,dx=0$ of
	Definition~\ref{def:smooth-ER-subsolution}\,(3); it does not affect
	\eqref{eq:ER-vpr}, since functions of $t$ alone have vanishing spatial
	gradient.
	
	Restricting to $\mathcal S^d_0$ moreover removes a gauge redundancy in the
	pair $(p,R)$: for any $\phi\in C^\infty([0,T]\times\mathbb T^d)$ with
	$\int_{\mathbb T^d}\phi(t,\cdot)\,dx=0$, the substitution
	$(v,p,R)\mapsto(v,\,p-\phi,\,R+\phi\,\mathrm{Id})$ preserves both
	\eqref{eq:ER-vpr} and the pressure normalization, because
	$\operatorname{div}(\phi\,\mathrm{Id})=\nabla\phi$. Trace-freeness
	eliminates this one-parameter family and makes the parametrization by
	$(v,p,R)$ faithful.
\end{remark}

Fix a smooth divergence-free initial datum
$v_0\in C^\infty(\mathbb T^d;\mathbb R^d)$, $\operatorname{div}v_0=0$.

\begin{definition}
	\label{def:smooth-ER-subsolution}
	A triple $(v,p,R)\in\mathcal Z_{\mathrm{ER}}$ is a \emph{smooth
		Euler--Reynolds subsolution} with initial datum $v_0$ if
	\begin{enumerate}
		\item
		$\partial_t v+\operatorname{div}(v\otimes v)+\nabla p
		=-\operatorname{div}R$ and $\operatorname{div}v=0$ hold pointwise on
		$[0,T]\times\mathbb T^d$;
		\item
		$v(0,\cdot)=v_0$;
		\item
		$\displaystyle\int_{\mathbb T^d}p(t,x)\,dx=0$ for all $t\in[0,T]$.
	\end{enumerate}
	We denote the corresponding diffeological subspace by
	$\mathcal X_{\mathrm{ER}}^\infty(v_0)\subseteq\mathcal Z_{\mathrm{ER}}$,
	equipped with the subspace diffeology, and write
	$\iota_{\mathrm{ER}}\colon
	\mathcal X_{\mathrm{ER}}^\infty(v_0)\hookrightarrow\mathcal Z_{\mathrm{ER}}$
	for the inclusion.
\end{definition}
 
\begin{example}
	\label{ex:ER-stratum-nonempty}
	The stratum $\mathcal X_{\mathrm{ER}}^\infty(v_0)$ is never empty: the
	frozen triple
	$$
	v\equiv v_0,
	\qquad
	R:=-(v_0\otimes v_0)^{\circ},
	\qquad
	p:=-\frac1d
	\Bigl(|v_0|^{2}-\int_{\mathbb T^d}|v_0|^{2}\,dx\Bigr)
	$$
	satisfies Definition~\ref{def:smooth-ER-subsolution}. Indeed
	$\partial_t v=0$ and
	$$-\operatorname{div}R=\operatorname{div}(v_0\otimes v_0)
	-\tfrac1d\nabla|v_0|^{2}
	=\operatorname{div}(v_0\otimes v_0)+\nabla p.$$
	Thus the defect can absorb the entire nonlinearity, and no compatibility
	condition on $v_0$ beyond $\operatorname{div}v_0=0$ is needed.
\end{example}

Since $\mathcal Z_{\mathrm{ER}}$ is a convenient vector space with its
canonical diffeology, Theorem~\ref{thm:convenient-tangent}  
provides for every $z\in\mathcal Z_{\mathrm{ER}}$ a linear isomorphism
$
\Phi_z^{\mathcal Z_{\mathrm{ER}}}\colon
\mathcal Z_{\mathrm{ER}}\;\xrightarrow{\ \sim\ }\;T_z\mathcal Z_{\mathrm{ER}} .
$
For $z\in\mathcal X_{\mathrm{ER}}^\infty(v_0)$ and
$\xi\in T_z\mathcal X_{\mathrm{ER}}^\infty(v_0)$ we call
$$
\mathrm{amb}_z(\xi)
:=
\bigl(\Phi_z^{\mathcal Z_{\mathrm{ER}}}\bigr)^{-1}
\bigl(d(\iota_{\mathrm{ER}})_z(\xi)\bigr)
\in\mathcal Z_{\mathrm{ER}}
$$
the ambient realization of $\xi$; the map $\mathrm{amb}_z$ is linear as
a composition of linear maps. Finally we set
\begin{equation}
	\label{eq:lin-ER-vpr}
	\mathcal L_z
	:=
	\left\{
	(w,q,S)\in\mathcal Z_{\mathrm{ER}}
	\;\middle|\;
	\begin{aligned}
		&\partial_t w+\operatorname{div}(v\otimes w+w\otimes v)+\nabla q
		=-\operatorname{div}S,\\
		&\operatorname{div}w=0,\qquad w(0,\cdot)=0,\qquad
		\int_{\mathbb T^d}q(t,\cdot)\,dx=0 \ \ \forall t\in[0,T]
	\end{aligned}
	\right\},
\end{equation}
the linear space of solutions of the Euler--Reynolds system linearized at
$z=(v,p,R)$; by Lemma~\ref{lem:ER-operator-smooth} the first two conditions
say precisely that $(w,q,S)\in\ker d\mathcal F_z$.

\begin{proposition}
	\label{prop:linearized-ER-vpr}
	Let $z=(v,p,R)\in\mathcal X_{\mathrm{ER}}^\infty(v_0)$. Then
	$$
	\mathrm{amb}_z\bigl(T_z\mathcal X_{\mathrm{ER}}^\infty(v_0)\bigr)
	=
	\mathcal L_z .
	$$
\end{proposition}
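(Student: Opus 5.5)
The proof establishes two inclusions separately.

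\emph{The inclusion $\mathrm{amb}_z(T_z\mathcal X^\infty_{\mathrm{ER}}(v_0))\subseteq\mathcal L_z$.} We argue as in the proof of Proposition~\ref{prop:lin-euler-smooth}. By Definition~\ref{def:smooth-ER-subsolution}, three smooth maps are constant along the inclusion $\iota_{\mathrm{ER}}$:
\begin{itemize}
\item the map $\mathcal F$ of Lemma~\ref{lem:ER-operator-smooth}, which vanishes;
\item the continuous linear evaluation $\operatorname{ev}_0\colon (v,p,R)\mapsto v(0,\cdot)$, which equals $v_0$;
\item the continuous linear mean map $M\colon (v,p,R)\mapsto \bigl(t\mapsto\int_{\mathbb T^d}p(t,x)\,dx\bigr)$, which takes values in $C^\infty([0,T])$ and vanishes.
\end{itemize}
Proposition~\ref{prop:const-vanish} and the chain rule then give $d\mathcal F_z(\mathrm{amb}_z\xi)=0$, $\operatorname{ev}_0(\mathrm{amb}_z\xi)=0$ and $M(\mathrm{amb}_z\xi)=0$. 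Here we use Proposition~\ref{prop:tangent-linear} for the linear maps, and Lemma~\ref{lem:ER-operator-smooth} together with the identification $\Phi_z$ for $\mathcal F$. These three identities are exactly the defining conditions of $\mathcal L_z$.

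\emph{The inclusion $\mathcal L_z\subseteq\mathrm{amb}_z(T_z\mathcal X^\infty_{\mathrm{ER}}(v_0))$.} Given $(w,q,S)\in\mathcal L_z$, we build an explicit smooth curve in $\mathcal X^\infty_{\mathrm{ER}}(v_0)$ through $z$ with velocity $(w,q,S)$. The extra freedom in the stress makes this possible, unlike the Euler case. Set
$$
\gamma(s):=\bigl(v+sw,\;p+sq,\;R+sS+s^2R_2\bigr),\qquad R_2:=-(w\otimes w)^{\circ}.
$$
Since the system is quadratic, expanding in $s$ gives
$$
\mathcal F(\gamma(s))=\mathcal F(z)+s\,d\mathcal F_z(w,q,S)+s^2\bigl(\operatorname{div}(w\otimes w)+\operatorname{div}R_2\bigr).
$$
The zeroth- and first-order terms vanish. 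The second-order term equals $\tfrac1d\nabla|w|^2$, which is a gradient. We absorb it by adding $s^2p_2$ to the pressure, with
$$
p_2:=-\tfrac1d\Bigl(|w|^2-\int_{\mathbb T^d}|w|^2\,dx\Bigr),
$$
exactly as in Example~\ref{ex:ER-stratum-nonempty}. The modified curve then satisfies the following:
\begin{itemize}
\item the momentum equation and $\operatorname{div}(v+sw)=0$ hold;
\item the initial condition holds, since $w(0,\cdot)=0$;
\item the zero-mean pressure normalization holds, since $q$ and $p_2$ have zero spatial mean;
\item trace-freeness holds, since $S$ and $R_2$ are trace-free.
\end{itemize}
So $\gamma$ takes values in $\mathcal X^\infty_{\mathrm{ER}}(v_0)$.

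The curve $\gamma$ is polynomial in $s$ with coefficients in $\mathcal Z_{\mathrm{ER}}$, hence a plot of $\mathcal Z_{\mathrm{ER}}$. By the subspace diffeology it is therefore a $1$-plot of $\mathcal X^\infty_{\mathrm{ER}}(v_0)$ centered at $z$. Let $\xi:=\left.\frac{d}{ds}\right|_{s=0}\gamma$. By the chain rule and Lemma~\ref{lem:velocity},
$$
d(\iota_{\mathrm{ER}})_z(\xi)=\Phi_z(\gamma'(0))=\Phi_z(w,q,S),
$$
so $\mathrm{amb}_z(\xi)=(w,q,S)$.

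The main point requiring care is the quadratic correction in the second inclusion: the $s^2$ term must be absorbed entirely within the trace-free stress and zero-mean pressure classes. The trace/deviatoric splitting above handles this. Beyond that, we only need to check that each of $\operatorname{ev}_0$ and $M$ is continuous linear between Fréchet spaces, so that Proposition~\ref{prop:tangent-linear} applies to it.
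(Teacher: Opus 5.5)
Your proposal is correct and follows the paper's proof: the converse inclusion uses exactly the same polynomial curve $v_s=v+sw$, $R_s=R+sS-s^2(w\otimes w)^{\circ}$, $p_s=p+sq-\tfrac{s^2}{d}\bigl(|w|^2-\int_{\mathbb T^d}|w|^2\,dx\bigr)$, and the same trace/deviatoric absorption. The only minor difference is in the forward inclusion: the paper first reduces to curve-represented tangent vectors via Remark~\ref{rem:curve-representation-finite-sums} and differentiates along curves, whereas you apply the chain rule to the constant composites $\mathcal F\circ\iota_{\mathrm{ER}}$, $\operatorname{ev}_0\circ\iota_{\mathrm{ER}}$ and $M\circ\iota_{\mathrm{ER}}$ for arbitrary tangent vectors, which makes that reduction unnecessary.
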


\begin{proof}
	By Remark~\ref{rem:curve-representation-finite-sums}  
	every internal tangent vector at $z$ is a finite linear combination of
	tangent vectors represented by smooth curves through $z$. Since
	$\mathrm{amb}_z$ is linear and $\mathcal L_z$ is a linear subspace, it
	suffices to treat $\xi=\frac{d}{ds}\big|_{0}\gamma$ for a smooth curve
	$\gamma\colon(-\varepsilon,\varepsilon)\to
	\mathcal X_{\mathrm{ER}}^\infty(v_0)$,
	$\gamma(s)=(v_s,p_s,R_s)$, $\gamma(0)=z$. Naturality of the differential
	gives
	$$
	\Phi_z^{\mathcal Z_{\mathrm{ER}}}\bigl(\mathrm{amb}_z(\xi)\bigr)
	=
	\frac{d}{ds}\Big|_{0}(\iota_{\mathrm{ER}}\circ\gamma),
	$$
	hence 
	$$
	\mathrm{amb}_z(\xi)
	=
	\frac{d}{ds}\Big|_{s=0}(v_s,p_s,R_s)=:(w,q,S)
	$$
	in $\mathcal Z_{\mathrm{ER}}$, the last identification being the canonical
	one for a convenient vector space. Each $\gamma(s)$ satisfies
	$\mathcal F(\gamma(s))=(0,0)$, so differentiating at $s=0$ and using the
	chain rule together with Lemma~\ref{lem:ER-operator-smooth} yields
	$(w,q,S)\in\ker d\mathcal F_z$, i.e.\ the first two conditions in
	\eqref{eq:lin-ER-vpr}. Differentiating $v_s(0,\cdot)=v_0$ gives
	$w(0,\cdot)=0$, and differentiating $\int_{\mathbb T^d}p_s(t,x)\,dx=0$
	gives the pressure normalization.
	
    Conversely, let $(w,q,S)\in\mathcal L_z$ and set, for $s\in\mathbb R$,
	$$
	v_s:=v+sw,
	\qquad
	R_s:=R+sS-s^{2}(w\otimes w)^{\circ},
	\qquad
	p_s:=p+sq-\frac{s^{2}}{d}
	\Bigl(|w|^{2}-\int_{\mathbb T^d}|w|^{2}\,dx\Bigr).
	$$
	All three depend polynomially on $s$, so
	$s\mapsto(v_s,p_s,R_s)$ is a smooth curve in $\mathcal Z_{\mathrm{ER}}$,
	and $R_s$ takes values in $\mathcal S^d_0$ because $(\,\cdot\,)^{\circ}$
	does. Expanding the nonlinearity and cancelling the terms of order $s^{0}$
	and $s^{1}$ by means of \eqref{eq:ER-vpr} and \eqref{eq:lin-ER-vpr},
	$$
	\partial_t v_s+\operatorname{div}(v_s\otimes v_s)+\nabla(p+sq)
	=
	-\operatorname{div}R-s\operatorname{div}S
	+s^{2}\operatorname{div}(w\otimes w).
	$$
	Since
	$\operatorname{div}(w\otimes w)
	=\operatorname{div}(w\otimes w)^{\circ}+\tfrac1d\nabla|w|^{2}$,
	the right-hand side equals
	$-\operatorname{div}R_s+\tfrac{s^{2}}{d}\nabla|w|^{2}$, and absorbing the
	gradient term into the pressure gives
	$$
	\partial_t v_s+\operatorname{div}(v_s\otimes v_s)+\nabla p_s
	=-\operatorname{div}R_s .
	$$
	Moreover $\operatorname{div}v_s=\operatorname{div}v+s\operatorname{div}w=0$,
	and $v_s(0,\cdot)=v_0$ because $w(0,\cdot)=0$. Finally, using
	$\int_{\mathbb T^d}q(t,\cdot)\,dx=0$ and $|\mathbb T^d|=1$,
	$$
	\int_{\mathbb T^d}p_s(t,x)\,dx
	=
	-\frac{s^{2}}{d}
	\Bigl(
	\int_{\mathbb T^d}|w|^{2}\,dx-\int_{\mathbb T^d}|w|^{2}\,dx
	\Bigr)
	=0 .
	$$
	Hence $(v_s,p_s,R_s)\in\mathcal X_{\mathrm{ER}}^\infty(v_0)$ for every $s$,
	and $\xi:=\frac{d}{ds}\big|_{0}(v_s,p_s,R_s)
	\in T_z\mathcal X_{\mathrm{ER}}^\infty(v_0)$ satisfies
	$\mathrm{amb}_z(\xi)=(w,q,S)$.
\end{proof}
 
	The surjectivity in Proposition~\ref{prop:linearized-ER-vpr} is exactly the
	room provided by the defect: the second-order corrector
	$-s^{2}(w\otimes w)^{\circ}$ is absorbed into $R_s$, so no constraint beyond
	\eqref{eq:lin-ER-vpr} survives, and the deformation is realized by an
	honest polynomial curve rather than an asymptotic construction. Since no
	sign or cone condition is imposed on $R$, the stratum
	$\mathcal X_{\mathrm{ER}}^\infty(v_0)$ has no boundary strata.
	
	We stress what the statement does not assert. It identifies the
	image $\mathrm{amb}_z\bigl(T_z\mathcal X_{\mathrm{ER}}^\infty(v_0)\bigr)$
	with $\mathcal L_z$; injectivity of $d(\iota_{\mathrm{ER}})_z$, and hence an
	isomorphism $T_z\mathcal X_{\mathrm{ER}}^\infty(v_0)\cong\mathcal L_z$, is
	not claimed. Internal tangent spaces of diffeological subspaces may
	contain vectors that the inclusion does not detect, and all statements
	below are therefore formulated in terms of ambient realizations.

\begin{corollary} 
	\label{cor:smooth-stress-gauge}
	Let $z=(v,p,R)\in\mathcal X_{\mathrm{ER}}^\infty(v_0)$. The following two
	subspaces of $\mathcal L_z$ are independent of $z$.
	\begin{enumerate}
		\item\label{it:sg-frozen-pressure}
		\emph{(Velocity and pressure frozen.)}
		$$
		\mathcal L_z\cap
		\bigl(\{0\}\times\{0\}\times
		C^\infty([0,T]\times\mathbb T^d;\mathcal S^d_0)\bigr)
		=
		\bigl\{(0,0,S)\;:\;\operatorname{div}S=0\bigr\}.
		$$
		This space is infinite dimensional: it contains every $x$-independent
		field $S(t,x)=a(t)$ with $a\in C^\infty([0,T];\mathcal S^d_0)$, and for
		$d\ge3$ also the oscillatory modes $S(t,x)=a(t)A\cos(k\cdot x)$ with
		$k\in\mathbb Z^d\setminus\{0\}$ and $A\in\mathcal S^d_0$ satisfying
		$Ak=0$, the space of such $A$ having dimension $\tfrac{d(d-1)}2-1$. For
		$d=2$ this space is trivial, so every divergence-free field in
		$C^\infty([0,T]\times\mathbb T^2;\mathcal S^2_0)$ is $x$-independent.
		
		\item\label{it:sg-free-pressure}
		\emph{(Velocity frozen, pressure free.)}
		$$
		\mathcal L_z\cap
		\bigl(\{0\}\times C^\infty([0,T]\times\mathbb T^d)\times
		C^\infty([0,T]\times\mathbb T^d;\mathcal S^d_0)\bigr)
		=
		\left\{
		(0,q,S)
		\;\middle|\;
		\begin{aligned}
			&\operatorname{div}S+\nabla q=0,\\
			&\textstyle\int_{\mathbb T^d}q(t,\cdot)\,dx=0
			\ \ \forall t\in[0,T]
		\end{aligned}
		\right\}.
		$$
		The projection $(0,q,S)\mapsto S$ is a linear isomorphism of this space
		onto
		$$
		\mathcal G
		:=
		\bigl\{
		S\in C^\infty([0,T]\times\mathbb T^d;\mathcal S^d_0)
		\;:\;
		\partial_i(\operatorname{div}S)_j=\partial_j(\operatorname{div}S)_i
		\ \text{ for all }i,j
		\bigr\},
		$$
		with inverse
		$S\mapsto\bigl(0,-\Delta^{-1}\operatorname{div}\operatorname{div}S,S\bigr)$,
		where $\Delta^{-1}$ denotes the inverse spatial Laplacian on functions of
		vanishing spatial mean. The space in (\ref{it:sg-frozen-pressure}) is
		exactly the subspace of (\ref{it:sg-free-pressure}) cut out by $q=0$.
	\end{enumerate}
\end{corollary}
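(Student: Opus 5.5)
Everything reduces to reading off the defining conditions of $\mathcal L_z$ in \eqref{eq:lin-ER-vpr} after setting $w=0$. With $w=0$ the terms $\operatorname{div}(v\otimes w+w\otimes v)$, $\operatorname{div}w$ and $w(0,\cdot)$ vanish identically. The only conditions left are $\nabla q=-\operatorname{div}S$ and $\int_{\mathbb T^d}q(t,\cdot)\,dx=0$. These no longer involve $z=(v,p,R)$, which gives the claimed independence of $z$ in both items. For item (\ref{it:sg-frozen-pressure}) we additionally impose $q=0$, so the condition becomes $\operatorname{div}S=0$. The normalization $\int q=0$ is then automatic. This also shows that (\ref{it:sg-frozen-pressure}) is the $q=0$ slice of (\ref{it:sg-free-pressure}).

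For the examples in (\ref{it:sg-frozen-pressure}), an $x$-independent $a(t)$ is trivially divergence-free. For $S=a(t)A\cos(k\cdot x)$ the $i$-th component is
$$(\operatorname{div}S)_i=-a(t)\sin(k\cdot x)\,(Ak)_i,$$
so $\operatorname{div}S=0$ exactly when $Ak=0$. The map $A\mapsto Ak$ from $\mathcal S^d_0$ to $\mathbb R^d$ is surjective for $k\neq0$. To see this, take $A=bk^T+kb^T-\tfrac{2}{d}(b\cdot k)\mathrm{Id}$ and adjust $b$. Its kernel therefore has dimension $\tfrac{d(d+1)}2-1-d=\tfrac{d(d-1)}2-1$. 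This dimension is positive for $d\ge3$, and distinct $k$ give linearly independent modes, so the space is infinite dimensional.

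For the $d=2$ rigidity, the natural tool is Fourier series in $x$, fibrewise in $t$, exactly as in Lemma~\ref{lem:distributional-hodge}. Each coefficient $\widehat S(t,k)\in\mathcal S^2_0\otimes\mathbb C$ must satisfy $\widehat S(t,k)k=0$. A trace-free symmetric $2\times2$ matrix with a nonzero kernel vector has determinant $-(a^2+b^2)=0$, hence is zero. So only the $k=0$ mode survives, and $S$ is $x$-independent. I expect this to be the only step needing real care: the complexified coefficients must be handled by splitting into real and imaginary parts, each of which is real trace-free symmetric and annihilates $k$.

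For item (\ref{it:sg-free-pressure}), injectivity of $(0,q,S)\mapsto S$ follows as in the uniqueness argument of Proposition~\ref{prop:lin-euler-smooth}. Indeed, $\nabla q=-\operatorname{div}S$ determines $q$ up to a function of $t$, and the mean normalization fixes it.

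The image lies in $\mathcal G$ because $\operatorname{div}S=-\nabla q$ is a gradient, and mixed partials of $q$ commute. For surjectivity, take $S\in\mathcal G$ and set $F:=\operatorname{div}S$. The curl-free condition gives $k_i\widehat F_j(k)=k_j\widehat F_i(k)$, so $\widehat F(k)\parallel k$ for $k\neq0$. Also $\widehat F(0)=0$, since $F$ is a divergence. Hence $F=\nabla\pi$ with
$$\pi=\Delta^{-1}\operatorname{div}F=\Delta^{-1}\operatorname{div}\operatorname{div}S$$
of zero mean. Smoothness in $(t,x)$ follows from the continuity of $(-\Delta)^{-1}_0$ established in the proof of Proposition~\ref{prop:lin-euler-smooth}. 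Taking $q:=-\pi$ gives $\nabla q+\operatorname{div}S=0$ with $\int q=0$. This produces the stated inverse $S\mapsto(0,-\Delta^{-1}\operatorname{div}\operatorname{div}S,S)$, and checking that it composes to the identity in both directions is immediate from injectivity.
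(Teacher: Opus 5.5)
Your proposal is correct and follows the paper's proof essentially step for step: substituting $w=0$ in \eqref{eq:lin-ER-vpr}, treating $d=2$ through Fourier modes split as $\widehat S=A+iB$, and building $q=-\Delta^{-1}\operatorname{div}\operatorname{div}S$ from $\widehat{\operatorname{div}S}(t,k)\parallel k$, with uniqueness coming from the mean normalization. The differences are cosmetic: you use rank--nullity for $A\mapsto Ak$ and a $2\times 2$ determinant where the paper restricts $A$ to $k^{\perp}$. One harmless slip: $k$ and $-k$ give the same mode $\cos(k\cdot x)$, so the modes are independent only for $k$ up to sign, but infinite dimensionality already follows from the $x$-independent fields $a(t)$.
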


\begin{proof}
	Substituting $w=0$ into \eqref{eq:lin-ER-vpr} annihilates the only
	$z$-dependent term, $\operatorname{div}(v\otimes w+w\otimes v)$, and makes
	the constraints $\operatorname{div}w=0$ and $w(0,\cdot)=0$ vacuous; this
	proves both descriptions and their $z$-independence, part
	(\ref{it:sg-frozen-pressure}) being the case $q=0$.
	
	For the dimension count in \ref{it:sg-frozen-pressure}, an $A\in\Sym(d)$
	with $Ak=0$ restricts to a symmetric endomorphism of $k^{\perp}$, and
	$\operatorname{tr}A=0$ is one further linear condition on it, whence
	$\dim=\dim\Sym(d-1)-1=\tfrac{d(d-1)}2-1$. Such an $A$ gives
	$\operatorname{div}\bigl(A\cos(k\cdot x)\bigr)=-(Ak)\sin(k\cdot x)=0$.
	Conversely, if $S$ is divergence-free on $\mathbb T^2$ with values in
	$\mathcal S^2_0$, then each Fourier mode satisfies $\hat S(t,k)k=0$; this
	constraint is real-linear in $\hat S$, so writing $\hat S=A+iB$ with
	$A,B\in\mathcal S^2_0$ gives $Ak=Bk=0$, and the computation above forces
	$A=B=0$ for $k\neq0$.
	
	For (\ref{it:sg-free-pressure}), if $(0,q,S)$ lies in the displayed space then
	$\operatorname{div}S=-\nabla q$ is a spatial gradient, hence curl-free, so
	$S\in\mathcal G$. Conversely let $S\in\mathcal G$. Every component of
	$\operatorname{div}S$ is a spatial divergence, so
	$\widehat{\operatorname{div}S}(t,0)=0$, while curl-freeness says that
	$\widehat{\operatorname{div}S}(t,k)=\mu(t,k)\,k$ for $k\neq0$ and some
	scalars $\mu(t,k)$. Since $\operatorname{div}\operatorname{div}S$ has
	vanishing spatial mean, $q:=-\Delta^{-1}\operatorname{div}\operatorname{div}S$
	is well defined, smooth, and of vanishing spatial mean, with
	$\hat q(t,k)=i\mu(t,k)$ for $k\neq0$; hence
	$\widehat{\nabla q}(t,k)=ik\,\hat q(t,k)=-\mu(t,k)k
	=-\widehat{\operatorname{div}S}(t,k)$, i.e.\ $\operatorname{div}S+\nabla q=0$.
	Two admissible pressures differ by a function of $t$ alone with vanishing
	spatial mean, hence coincide, so the projection is bijective with the stated
	inverse. Finally, $q=0$ forces $\operatorname{div}S=0$, and conversely
	$\operatorname{div}S=0$ gives $\nabla q=0$, whence $q=0$ by the
	normalization.
\end{proof}

\subsubsection{The zero-stress Euler locus}
\label{subsubsec:zero-stress-Euler-locus}

The Euler equations appear as the zero-stress locus of the Euler--Reynolds
stratum. Define
$$
\mathcal X_{\mathrm{Euler}}^\infty(v_0)
:=
\{(v,p,R)\in\mathcal X_{\mathrm{ER}}^\infty(v_0)\mid R=0\},
$$
so that $(v,p,0)\in\mathcal X_{\mathrm{Euler}}^\infty(v_0)$ if and only if
$$
\partial_t v+\operatorname{div}(v\otimes v)+\nabla p=0,
\qquad
\operatorname{div}v=0,
\qquad
v(0,\cdot)=v_0,
\qquad
\int_{\mathbb T^d}p(t,x)\,dx=0 ,
$$
i.e.\ $\mathcal X_{\mathrm{Euler}}^\infty(v_0)$ is the normalized smooth
incompressible Euler solution space with initial datum $v_0$. Let
$\iota_{\mathrm{Euler}}\colon
\mathcal X_{\mathrm{Euler}}^\infty(v_0)
\hookrightarrow\mathcal X_{\mathrm{ER}}^\infty(v_0)$ denote the inclusion.

\begin{proposition}
	\label{pro:ER-zero-stress-tangency}
	Let $z=(v,p,0)\in\mathcal X_{\mathrm{Euler}}^\infty(v_0)$ and
	$\xi\in T_z\mathcal X_{\mathrm{Euler}}^\infty(v_0)$, and write
	$(w,q,S):=\mathrm{amb}_z\bigl(d(\iota_{\mathrm{Euler}})_z(\xi)\bigr)$.
	Then $S=0$ and
	$$
	\partial_t w+\operatorname{div}(v\otimes w+w\otimes v)+\nabla q=0,
	\qquad
	\operatorname{div}w=0,
	\qquad
	w(0,\cdot)=0,
	\qquad
	\int_{\mathbb T^d}q(t,x)\,dx=0 .
	$$
\end{proposition}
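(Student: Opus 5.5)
The plan is to split the statement into two parts. The four linear relations will come from Proposition~\ref{prop:linearized-ER-vpr}, and the new conclusion $S=0$ will come from a constant-map argument applied to the stress projection. Set $\eta:=d(\iota_{\mathrm{Euler}})_z(\xi)\in T_z\mathcal X_{\mathrm{ER}}^\infty(v_0)$. By definition $(w,q,S)=\mathrm{amb}_z(\eta)$, so Proposition~\ref{prop:linearized-ER-vpr} gives $(w,q,S)\in\mathcal L_z$. Since $R=0$ at $z$, the defining relations \eqref{eq:lin-ER-vpr} read
\[
\partial_t w+\operatorname{div}(v\otimes w+w\otimes v)+\nabla q=-\operatorname{div}S,
\qquad
\operatorname{div}w=0,\qquad w(0,\cdot)=0,\qquad \int_{\mathbb T^d}q(t,\cdot)\,dx=0 .
\]
Once $S=0$ is established, the right-hand side of the momentum equation drops out and the stated system follows.

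To show $S=0$, consider the projection
\[
\pi_R\colon\mathcal Z_{\mathrm{ER}}\to C^\infty([0,T]\times\mathbb T^d;\mathcal S^d_0),\qquad (v,p,R)\mapsto R .
\]
This map is continuous and linear between convenient spaces, so it is smooth, and by Proposition~\ref{prop:tangent-linear} its internal differential, under the identifications of Theorem~\ref{thm:convenient-tangent}, is $\pi_R$ itself. The composite
\[
\pi_R\circ\iota_{\mathrm{ER}}\circ\iota_{\mathrm{Euler}}\colon\mathcal X_{\mathrm{Euler}}^\infty(v_0)\to C^\infty([0,T]\times\mathbb T^d;\mathcal S^d_0)
\]
is identically $0$, since every point of the Euler locus has $R=0$. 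By Proposition~\ref{prop:const-vanish} its differential at $z$ vanishes. The chain rule (Proposition~\ref{prop:chain-rule}) then gives
\[
0=d(\pi_R)_z\bigl(d(\iota_{\mathrm{ER}})_z(\eta)\bigr)=d(\pi_R)_z\bigl(\Phi_z^{\mathcal Z_{\mathrm{ER}}}(w,q,S)\bigr)=\Phi_0(S),
\]
and since $\Phi_0$ is injective, $S=0$.

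The only delicate point is the compatibility of the canonical identifications $\Phi$ with the linear map $\pi_R$. This is exactly the content of Proposition~\ref{prop:tangent-linear} in the case $m=1$, applied to the product $\mathcal Z_{\mathrm{ER}}$, whose canonical diffeology coincides with the product diffeology. So there is no real obstacle, and the argument is a direct assembly of earlier results.
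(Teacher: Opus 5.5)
Your proof is correct and has the same structure as the paper's. The four linear relations come from Proposition~\ref{prop:linearized-ER-vpr} once $S=0$ is known, and $S=0$ comes from the stress vanishing identically on the Euler locus. The only difference is how you get $S=0$. The paper writes $\xi$ as a finite combination of curve velocities (Remark~\ref{rem:curve-representation-finite-sums}), each with zero third component. You instead apply Proposition~\ref{prop:const-vanish}, the chain rule and Proposition~\ref{prop:tangent-linear} to the constant map $\pi_R\circ\iota_{\mathrm{ER}}\circ\iota_{\mathrm{Euler}}$. That is equally valid and matches the constraint-differentiation technique the paper uses elsewhere.
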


\begin{proof}
	Every plot of $\mathcal X_{\mathrm{Euler}}^\infty(v_0)$ has vanishing third
	component, so $S=0$ by Remark~\ref{rem:curve-representation-finite-sums}
	and linearity of $\mathrm{amb}_z$. The remaining assertions follow by
	substituting $S=0$ into \eqref{eq:lin-ER-vpr} and applying
	Proposition~\ref{prop:linearized-ER-vpr}.
\end{proof}
 
\begin{remark} 
	\label{rem:euler-locus-rigidity}
	Proposition~\ref{pro:ER-zero-stress-tangency} is a consistency check on the
	formalism rather than a source of nontrivial directions. Two smooth
	solutions of the incompressible Euler equations on $[0,T]\times\mathbb T^d$
	with the same initial datum coincide, and the normalization
	$\int_{\mathbb T^d}p(t,\cdot)\,dx=0$ then determines the pressure. Hence
	$\mathcal X_{\mathrm{Euler}}^\infty(v_0)$ is either empty or a single
	point, and consequently
	$T_z\mathcal X_{\mathrm{Euler}}^\infty(v_0)=\{0\}$: the conclusion of
	Proposition~\ref{pro:ER-zero-stress-tangency} is realized only by
	$(w,q,S)=(0,0,0)$.
	
	At the smooth level, with the initial datum frozen, all flexibility of
	$\mathcal X_{\mathrm{ER}}^\infty(v_0)$ is therefore carried by the defect,
	as quantified by Corollary~\ref{cor:smooth-stress-gauge}. Obtaining an
	Euler locus with nontrivial internal tangent directions requires either
	releasing $v_0$ or weakening the regularity so that uniqueness fails. The
	second route is the one taken in Subsection~\ref{subsec:lifted-limit-space-definition},
	where the flux is carried as an independent variable and the Euler locus is
	cut out inside a lifted limit space.
\end{remark}

\subsection{The lifted limit space}
\label{subsec:lifted-limit-space-definition}
We reformulate smooth strict subsolutions in a weakly closed lifted
framework. The key idea is to regard the velocity, the quadratic flux, and the
Reynolds stress as independent variables before passing to weak limits. The
exact quadratic flux constraint is then recovered as a nonlinear locus in the
lifted space.

Let
$\mathcal V_w=L^\infty(0,T;L^2(\mathbb T^d;\mathbb R^d)),$
equipped with its Banach norm topology.
Let
$\mathcal Q
=
\mathcal D'\bigl((0,T)\times\mathbb T^d;\mathbb R^{d\times d}\bigr),$
$\mathcal R
=
\mathcal D'\bigl((0,T)\times\mathbb T^d;\mathcal S^d_0\bigr),$
equipped with their usual strong distribution topologies. Each of these
is a convenient vector space, and we equip it with its canonical diffeology.
The weak-* and weak distribution topologies will be used only to define the
closure below; they are not used to define the ambient diffeology. This
distinction is essential because the quadratic map
$v\mapsto v\otimes v$ is continuous for the norm topology on $\mathcal V_w$
but not for its weak-* topology.
Define the lifted ambient space by the convenient vector space
$$\mathbf E
=
\mathcal V_w\times\mathcal Q\times\mathcal R,$$
endowed with the product diffeology. A point of $\mathbf E$ will be written as
$z=(v,Q,R),$
where $v$ is the velocity, $Q$ is the lifted quadratic flux, and $R$ is the
trace-free Reynolds stress.

For $A\in\mathcal D'\bigl((0,T)\times\mathbb T^d;\mathbb R^{m}\bigr)$ and
$\phi\in C_c^\infty\bigl((0,T)\times\mathbb T^d;\mathbb R^{m}\bigr)$ we write
$\langle A,\phi\rangle$ for the duality pairing over
$(0,T)\times\mathbb T^d$, so that
$$
\langle A,\phi\rangle
=
\int_0^T\!\!\int_{\mathbb T^d} A\cdot\phi\,dx\,dt
$$
whenever $A$ is locally integrable. For matrix fields we use
$\langle A,\nabla\phi\rangle:=\langle A_{ij},\partial_i\phi_j\rangle$, and
$\operatorname{div}$ acts as $(\operatorname{div}A)_j:=\partial_iA_{ij}$, so
that
$\langle\operatorname{div}A,\phi\rangle=-\langle A,\nabla\phi\rangle$.

Throughout, $\mathcal A\colon (0,T)\times\mathbb T^d\times\mathbb R^d\rightrightarrows\mathcal S^d_0$
is a set-valued map with, for every $(t,x,u)$,
$$
\mathcal A(t,x,u)\subseteq\mathcal S^d_0
\quad\text{nonempty, closed, convex, and}\quad
\operatorname{int}\mathcal A(t,x,u)\neq\varnothing,
$$
where the interior is taken relative to $\mathcal S^d_0$.

\begin{definition}
	\label{def:lift-smooth-subsolution}
	A triple $(v,p,R)$ is a \emph{lift-smooth strict subsolution} of the
	Euler--Reynolds system on $(0,T)\times\mathbb{T}^d$ with initial datum
	$v_0\in C^\infty(\mathbb{T}^d;\mathbb{R}^d)$ if
	\begin{enumerate}
		\item\label{it:lss-regularity}
		$v\in C^\infty([0,T]\times\mathbb{T}^d;\mathbb{R}^d)$,\quad
		$p\in C^\infty([0,T]\times\mathbb{T}^d;\mathbb{R})$,\quad
		$R\in C^\infty\!\bigl([0,T]\times\mathbb{T}^d;\,\mathcal{S}_0^d\bigr)$;
		\item\label{it:lss-equation}
		$\partial_t v + \operatorname{div}(v\otimes v) + \nabla p= -\operatorname{div}R$
		\quad and \quad
		$\operatorname{div}v = 0$
		\quad on $[0,T]\times\mathbb{T}^d$;
		\item\label{it:lss-initial}
		$v(0,\cdot) = v_0$;
		\item\label{it:lss-admissibility}
		$R(t,x)\in\operatorname{int}\mathcal{A}(t,x,v(t,x))$
		for all $(t,x)\in(0,T)\times\mathbb{T}^d$.
	\end{enumerate}
	Here the Euler--Reynolds equation and the divergence constraint are
	understood on $[0,T]\times\mathbb T^d$, the initial condition at $t=0$, and
	the admissibility condition on $(0,T)\times\mathbb T^d$, where $\mathcal A$
	is defined.
	The set of all such triples is denoted $\mathfrak S^\infty_{\mathrm{str}}(v_0,\mathcal A)$.
\end{definition}
After subtracting the spatial mean of $p$, every lift-smooth strict subsolution
admits a unique pressure normalization satisfying
$\int_{\mathbb T^d}p(t,x)\,dx=0$. This identifies the resulting normalized
triples with the subset of $\mathcal X^\infty_{\mathrm{ER}}(v_0)$ cut out by the
strict admissibility condition~\ref{it:lss-admissibility}; the normalization is
harmless because $p$ is not part of the lifted state.

The pressure is not included in the lifted state. It is recovered only up to the
usual distributional ambiguity through the weak momentum equation.

\begin{definition} 
	\label{def:lifted-limit-space}
	The lifting map is
	$$
	\Lambda:
	\mathfrak S^\infty_{\mathrm{str}}(v_0,\mathcal A)
	\longrightarrow
	\mathbf E,
	\qquad
	\Lambda(v,p,R):=(v,v\otimes v,R).
	$$
	The lifted limit space associated with $v_0$ and $\mathcal A$ is
	$$
	\mathbf X(v_0,\mathcal A)
	:=
	\overline{
		\Lambda\bigl(\mathfrak S^\infty_{\mathrm{str}}(v_0,\mathcal A)\bigr)
	}^{\,\tau_{\mathrm{cl}}},
	$$
	where $\tau_{\mathrm{cl}}$ is the product of the weak-* topology on
	$\mathcal V_w$ and the weak distribution topologies on $\mathcal Q$ and
	$\mathcal R$.
	The space $\mathbf X(v_0,\mathcal A)$ is equipped with the induced
	subspace diffeology inherited from the canonical
	diffeology of $\mathbf E$.
\end{definition}

Thus $z=(v,Q,R)\in\mathbf X(v_0,\mathcal A)$ precisely when it is a weak
lifted limit of smooth strict subsolutions:
$$
(v_\alpha,v_\alpha\otimes v_\alpha,R_\alpha)
\longrightarrow
(v,Q,R)
\qquad
\text{for the topology }\tau_{\mathrm{cl}}.
$$
The closure is taken after lifting. Consequently, the variables $Q$ and $R$
remain independent in the limit. In general one should not expect
$Q=v\otimes v$
on all of $\mathbf X(v_0,\mathcal A)$.

	The weak topology defining $\mathbf X(v_0,\mathcal A)$ does not, by
	itself, encode a strong trace at $t=0$. Thus $v_0$ is understood as part
	of the construction of the approximating smooth strict subsolutions. If one
	wants the initial datum to be a closed ambient constraint, the ambient space
	must be enriched with the appropriate trace component.

    Since $\mathbf E$ is a convenient vector space, Theorem~\ref{thm:convenient-tangent}
    provides a canonical vector-space isomorphism
    $\Phi_z^{\mathbf E}:\mathbf E\xrightarrow{\;\cong\;}T_z\mathbf E.$
    Whenever
    $\jmath:\mathbf X(v_0,\mathcal A)\hookrightarrow \mathbf E$
    denotes the inclusion of a diffeological subspace into $\mathbf E$, and
    $\xi\in T_z\mathbf X(v_0,\mathcal A),$
    we write
    $$
    \operatorname{Amb}_z(\xi)
    :=
    (\Phi_z^{\mathbf E})^{-1}\bigl((d\jmath)_z(\xi)\bigr)
    \in\mathbf E
    $$
    for the ambient realization of $\xi$. Thus, if
    $\operatorname{Amb}_z(\xi)=(w,H,S),$
    then $(w,H,S)$ is the actual lifted distributional tangent triple associated
    with $\xi$.

\begin{proposition} 
	\label{prop:weak-lifted-er}
	Let
	$z=(v,Q,R)\in\mathbf X(v_0,\mathcal A).$
	Then
	$\operatorname{div}v=0$
	in
	$\mathcal D'\bigl((0,T)\times\mathbb T^d\bigr),$
	and there exists
	$p\in\mathcal D'\bigl((0,T)\times\mathbb T^d\bigr)$
	such that
	\begin{equation}
		\label{eq:weak-lifted-er}
		\partial_t v+\operatorname{div}Q+\nabla p
		=
		-\operatorname{div}R
		\qquad
		\text{in }
		\mathcal D'\bigl((0,T)\times\mathbb T^d;\mathbb R^d\bigr).
	\end{equation}
\end{proposition}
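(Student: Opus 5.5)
The plan is to pass the linear constraints to the limit and then recover the pressure from the Hodge lemma. The key observation is that every constraint in the statement is linear in $(v,Q,R)$ and is tested against a fixed smooth compactly supported function. Such constraints are therefore closed for $\tau_{\mathrm{cl}}$.

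Fix $z=(v,Q,R)\in\mathbf X(v_0,\mathcal A)$ and a net $(v_\alpha,p_\alpha,R_\alpha)\in\mathfrak S^\infty_{\mathrm{str}}(v_0,\mathcal A)$ with $(v_\alpha,v_\alpha\otimes v_\alpha,R_\alpha)\to(v,Q,R)$ in $\tau_{\mathrm{cl}}$. A net is needed here, since the closure is taken in a topology that need not be sequential.

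\textbf{Divergence constraint.} Let $\phi\in C_c^\infty((0,T)\times\mathbb T^d)$. Then $u\mapsto\int_0^T\!\int_{\mathbb T^d} u\cdot\nabla\phi$ is weak-* continuous on $\mathcal V_w=L^\infty(0,T;L^2)$, because $\nabla\phi\in L^1(0,T;L^2)$ lies in the predual. Each $v_\alpha$ is smooth and divergence-free, so this pairing vanishes along the net. Passing to the limit gives $\langle v,\nabla\phi\rangle=0$, which is $\operatorname{div}v=0$ in $\mathcal D'$.

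\textbf{Momentum equation, divergence-free tests.} Let $\psi\in C_c^\infty((0,T)\times\mathbb T^d;\mathbb R^d)$ with $\operatorname{div}\psi=0$. Test the smooth equation of Definition~\ref{def:lift-smooth-subsolution}\,(\ref{it:lss-equation}) against $\psi$ and integrate by parts on the torus. The support of $\psi$ avoids $t=0,T$, so there are no boundary terms. The pressure term disappears because $\langle\nabla p_\alpha,\psi\rangle=-\langle p_\alpha,\operatorname{div}\psi\rangle=0$. This leaves
$$
-\langle v_\alpha,\partial_t\psi\rangle-\langle v_\alpha\otimes v_\alpha,\nabla\psi\rangle+\langle R_\alpha,\nabla\psi\rangle=0 .
$$
Each term is continuous for $\tau_{\mathrm{cl}}$:
\begin{itemize}
\item the first by weak-* convergence in $\mathcal V_w$, since $\partial_t\psi$ lies in the predual;
\item the second by weak distributional convergence of $Q_\alpha:=v_\alpha\otimes v_\alpha$ to $Q$ in $\mathcal Q$;
\item the third by weak distributional convergence in $\mathcal R$.
\end{itemize}
The identity therefore passes to the limit with $Q$ in place of $v\otimes v$. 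I expect this to be the main point to handle carefully: the limit must be taken in the lifted variable $Q$, never through the nonlinearity. That is exactly why the closure was taken after applying $\Lambda$.

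\textbf{Recovering the pressure.} Set
$$
F:=\partial_t v+\operatorname{div}Q+\operatorname{div}R\in\mathcal D'\bigl((0,T)\times\mathbb T^d;\mathbb R^d\bigr).
$$
The previous step says precisely that $\langle F,\psi\rangle=0$ for every divergence-free test field $\psi$. Lemma~\ref{lem:distributional-hodge} then provides $\pi\in\mathcal D'((0,T)\times\mathbb T^d)$ with $F=\nabla\pi$. Setting $p:=-\pi$ gives \eqref{eq:weak-lifted-er}.

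Two points remain to check. First, $F$ is a genuine distribution: $v\in L^\infty_tL^2_x\subset L^1_{\mathrm{loc}}$, and $Q,R$ are distributions by assumption, so this is immediate. Second, the sign conventions for $\operatorname{div}$ and $\nabla$ must match the pairing $\langle\operatorname{div}A,\phi\rangle=-\langle A,\nabla\phi\rangle$ fixed before Definition~\ref{def:lift-smooth-subsolution}.
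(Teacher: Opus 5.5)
Your proposal is correct and follows the paper's proof essentially step for step: you test the smooth equation against divergence-free $\psi$ so the pressure drops out, pass to the $\tau_{\mathrm{cl}}$-limit in the lifted variable $Q$ rather than through $v\otimes v$, apply Lemma~\ref{lem:distributional-hodge} to $F=\partial_t v+\operatorname{div}(Q+R)$ and set $p:=-\pi$, and pass the divergence constraint to the limit directly. One sign slip needs fixing: the term $\langle-\operatorname{div}R_\alpha,\psi\rangle=\langle R_\alpha,\nabla\psi\rangle$ sits on the right-hand side, so in your displayed identity the stress term should read $-\langle R_\alpha,\nabla\psi\rangle$ (as written, it would give $\langle F,\psi\rangle=-2\langle R,\nabla\psi\rangle$ rather than $0$), and with that correction the limit identity is exactly $\langle F,\psi\rangle=0$.
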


\begin{proof}
	Let $(v_\alpha,v_\alpha\otimes v_\alpha,R_\alpha)\to(v,Q,R)$
	for $\tau_{\mathrm{cl}}$. For any test field
	$\psi\in C_c^\infty\bigl((0,T)\times\mathbb T^d;\mathbb R^d\bigr)$
	with $\operatorname{div}\psi=0$, the Euler--Reynolds equation for the
	smooth subsolutions yields
	$$
	\langle v_\alpha,\partial_t\psi\rangle
	+\langle v_\alpha\otimes v_\alpha,\nabla\psi\rangle
	+\langle R_\alpha,\nabla\psi\rangle
	=0 ,
	$$
	the pressure term dropping out because $\operatorname{div}\psi=0$.
	Passing to the limit in each factor gives
	$$
	\langle v,\partial_t\psi\rangle
	+\langle Q,\nabla\psi\rangle
	+\langle R,\nabla\psi\rangle
	=0 .
	$$
	Define $F\in\mathcal D'\bigl((0,T)\times\mathbb T^d;\mathbb R^d\bigr)$ by
	$$
	\langle F,\phi\rangle
	:=
	-\bigl(
	\langle v,\partial_t\phi\rangle
	+\langle Q,\nabla\phi\rangle
	+\langle R,\nabla\phi\rangle
	\bigr),
	\qquad
	\phi\in C_c^\infty\bigl((0,T)\times\mathbb T^d;\mathbb R^d\bigr),
	$$
	so that $F=\partial_t v+\operatorname{div}(Q+R)$ in $\mathcal D'$. The
	passage to the limit shows $\langle F,\phi\rangle=0$ for every such $\phi$
	with $\operatorname{div}\phi=0$. 
	By Lemma~\ref{lem:distributional-hodge}, $F=\nabla\pi$ for some
	$\pi\in\mathcal D'((0,T)\times\mathbb T^d)$.
	Setting $p:=-\pi$ gives \eqref{eq:weak-lifted-er}.
	Finally, $\operatorname{div}v_\alpha=0$ for every $\alpha$. Weak-*
	convergence in $\mathcal V_w$ implies distributional convergence, so
	passing to the limit gives $\operatorname{div}v=0$. 
\end{proof}

\begin{proposition} 
	\label{prop:lin-lifted-er}
	Let
	$z=(v,Q,R)\in\mathbf X(v_0,\mathcal A),$
	$\xi\in T_z\mathbf X(v_0,\mathcal A).$
	Write
	$$
	(w,H,S)
	:=
	\operatorname{Amb}_z(\xi)
	=
	(\Phi_z^{\mathbf E})^{-1}\bigl(d\jmath_z(\xi)\bigr).
	$$
	Then
	$\operatorname{div}w=0$
	in 
	$\mathcal D'\bigl((0,T)\times\mathbb T^d\bigr),$
	and there exists
	$q\in\mathcal D'\bigl((0,T)\times\mathbb T^d\bigr)$
	such that
	\begin{equation}
		\label{eq:lin-lifted-er}
		\partial_t w+\operatorname{div}H+\nabla q
		=
		-\operatorname{div}S
		\qquad
		\text{in }
		\mathcal D'\bigl((0,T)\times\mathbb T^d;\mathbb R^d\bigr).
	\end{equation}
\end{proposition}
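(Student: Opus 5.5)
The plan follows the pattern of Propositions~\ref{prop:weak-linearized-divergence} and~\ref{prop:weak-linearized-divfree}. We use linear observables on $\mathbf E$ that vanish identically on $\mathbf X(v_0,\mathcal A)$, then apply the chain rule. The key point is that every constraint satisfied by points of $\mathbf X(v_0,\mathcal A)$ (Proposition~\ref{prop:weak-lifted-er}) is \emph{linear} and homogeneous in $(v,Q,R)$. The initial datum leaves no additive constant, because test fields are compactly supported in $(0,T)$. Hence the linearization is formally just the same system applied to $(w,H,S)$.

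\textbf{Step 1.} For $\phi\in C_c^\infty((0,T)\times\mathbb T^d)$, define $D_\phi:\mathbf E\to\mathbb R$ by $D_\phi(v,Q,R):=\langle v,\nabla\phi\rangle$. For divergence-free $\psi\in C_c^\infty((0,T)\times\mathbb T^d;\mathbb R^d)$, define
$$M_\psi(v,Q,R):=\langle v,\partial_t\psi\rangle+\langle Q,\nabla\psi\rangle+\langle R,\nabla\psi\rangle.$$
Each is a linear functional on $\mathbf E$, and each is continuous for the norm topology on $\mathcal V_w$ and the strong distribution topologies on $\mathcal Q,\mathcal R$, since evaluation at a fixed test function is continuous for the strong topology. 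So each is smooth for the canonical diffeology, and by Proposition~\ref{prop:tangent-linear} (case $m=1$) its internal differential at any point is itself under $\Phi_z^{\mathbf E}$.

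\textbf{Step 2.} The proof of Proposition~\ref{prop:weak-lifted-er} shows that $D_\phi\circ\jmath\equiv0$ and $M_\psi\circ\jmath\equiv0$ on $\mathbf X(v_0,\mathcal A)$. These are limits of the identities satisfied by the approximating smooth strict subsolutions, obtained by passing to the limit in the $\tau_{\mathrm{cl}}$ topology. Both composites are constant, so Proposition~\ref{prop:const-vanish} and the chain rule (Proposition~\ref{prop:chain-rule}) give
$$0=d(D_\phi)_z(d\jmath_z\xi)=D_\phi(w,H,S)=\langle w,\nabla\phi\rangle,$$
and similarly
$$\langle w,\partial_t\psi\rangle+\langle H,\nabla\psi\rangle+\langle S,\nabla\psi\rangle=0$$
for all divergence-free $\psi$. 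The first identity gives $\operatorname{div}w=0$ in $\mathcal D'$.

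\textbf{Step 3.} Define $F:=\partial_tw+\operatorname{div}(H+S)\in\mathcal D'((0,T)\times\mathbb T^d;\mathbb R^d)$ by
$$\langle F,\phi\rangle:=-\bigl(\langle w,\partial_t\phi\rangle+\langle H,\nabla\phi\rangle+\langle S,\nabla\phi\rangle\bigr).$$
By Step~2 it annihilates all divergence-free test fields. Lemma~\ref{lem:distributional-hodge} then gives $F=\nabla\pi$, and $q:=-\pi$ yields \eqref{eq:lin-lifted-er}.

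The main point to check carefully is Step~1: we need the continuity of $M_\psi$ and $D_\phi$ with respect to the topologies defining the canonical diffeology of $\mathbf E$, rather than $\tau_{\mathrm{cl}}$. We also need that Proposition~\ref{prop:tangent-linear} applies, which requires $\mathcal Q$ and $\mathcal R$ with the strong topology to be convenient, as asserted in the paper. Linearity makes everything else immediate, with no quadratic term to differentiate.
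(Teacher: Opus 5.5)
Your proposal is correct and follows the paper's proof essentially step for step. The paper likewise pairs $(w,H,S)$ with the continuous linear functional $\ell_\psi$, which vanishes on $\mathbf X(v_0,\mathcal A)$ by Proposition~\ref{prop:weak-lifted-er}; it then applies Proposition~\ref{prop:const-vanish} with the chain rule and invokes Lemma~\ref{lem:distributional-hodge} with $q:=-\pi$, differing only cosmetically in obtaining $\operatorname{div}w=0$ by differentiating the vector-valued continuous linear map $\operatorname{div}\colon\mathcal V_w\to\mathcal D'$ rather than through your scalar probes $D_\phi$.
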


\begin{proof}
	Let
	$\psi\in C_c^\infty\bigl((0,T)\times\mathbb T^d;\mathbb R^d\bigr)$
	with
	$\operatorname{div}\psi=0.$
	Define a continuous linear functional
	$$\ell_\psi\colon\mathbf E\to\mathbb R,
	\qquad
	\ell_\psi(v,Q,R)
	:=
	\langle v,\partial_t\psi\rangle
	+\langle Q,\nabla\psi\rangle
	+\langle R,\nabla\psi\rangle .$$
	By Proposition~\ref{prop:weak-lifted-er}, the restriction of $\ell_\psi$ to
	$\mathbf X(v_0,\mathcal A)$ vanishes identically:
	$\ell_\psi|_{\mathbf X(v_0,\mathcal A)}\equiv 0.$
	It follows from Proposition~\ref{prop:const-vanish} that for every
	$z\in \mathbf X(v_0,\mathcal A)$ and every
	$\xi\in T_z\mathbf X(v_0,\mathcal A)$,
	$$
	d\bigl(\ell_\psi|_{\mathbf X(v_0,\mathcal A)}\bigr)_z(\xi)=0.
	$$
	Applying the chain rule (Proposition~\ref{prop:chain-rule}) to 
	$\ell_\psi|_{\mathbf X(v_0,\mathcal A)}$, and using the linearity of $\ell_\psi$, we obtain
	$$
	0
	=
	d\bigl(\ell_\psi|_{\mathbf X(v_0,\mathcal A)}\bigr)_z(\xi)
	=
	\ell_\psi(w,H,S).
	$$
	Equivalently,
	$$
	\langle w,\partial_t\psi\rangle
	+\langle H,\nabla\psi\rangle
	+\langle S,\nabla\psi\rangle
	=0 .
	$$
	Hence
	$F:=\partial_t w+\operatorname{div}(H+S)$
	annihilates all compactly supported divergence-free test fields. By Lemma~\ref{lem:distributional-hodge}, there exists
	$\pi\in\mathcal D'\bigl((0,T)\times\mathbb T^d\bigr)$
	such that
	$F=\nabla\pi.$
	Setting $q:=-\pi$ gives \eqref{eq:lin-lifted-er}. Since $\operatorname{div}:\mathcal V_w\to
	\mathcal D'((0,T)\times\mathbb T^d)$ is continuous linear, hence smooth
	for the canonical diffeologies, differentiating the constraint
	$\operatorname{div}v=0$ gives
	$\operatorname{div}w=0.$
\end{proof}

\subsection{Tangency to the Euler locus}
\label{subsec:tangency-euler-locus}

\begin{definition} 
	\label{def:euler-locus}
	The Euler locus is
	$$
	\mathbf X_{\mathrm{Euler}}(v_0,\mathcal A)
	:=
	\left\{
	(v,Q,R)\in\mathbf X(v_0,\mathcal A)
	\;\middle|\;
	Q=v\otimes v
	\right\},
	$$
	equipped with the induced subspace diffeology.
\end{definition}
Here the term ``Euler locus'' refers to the locus where the lifted flux
variable agrees with the exact quadratic flux, $Q=v\otimes v$. The
stress variable $R$ is still allowed to be nonzero. This is more
precisely an exact-flux Euler--Reynolds locus; the locus of exact Euler
solutions is obtained by imposing in addition $R=0$.

If
$(v,Q,R)\in\mathbf X_{\mathrm{Euler}}(v_0,\mathcal A),$
then Proposition~\ref{prop:weak-lifted-er} gives
$$
\partial_t v+\operatorname{div}(v\otimes v)+\nabla p
=
-\operatorname{div}R
$$
in
$\mathcal D'\bigl((0,T)\times\mathbb T^d;\mathbb R^d\bigr)$
for some distributional pressure
$p\in\mathcal D'\bigl((0,T)\times\mathbb T^d\bigr).$

Let
$\iota_{\mathrm{Eu}}:
\mathbf X_{\mathrm{Euler}}(v_0,\mathcal A)
\hookrightarrow
\mathbf X(v_0,\mathcal A)$
denote the inclusion, and set
$\jmath_{\mathrm{Eu}}
:=
\jmath\circ\iota_{\mathrm{Eu}}
:
\mathbf X_{\mathrm{Euler}}(v_0,\mathcal A)
\hookrightarrow
\mathbf E.$
Recall that the Euler locus is characterized by the constraint
$Q=v\otimes v.$
 
\begin{proposition} 
	\label{prop:tangent-euler-locus}
	Let
	$z=(v,Q,R)\in\mathbf X_{\mathrm{Euler}}(v_0,\mathcal A),$
	$\xi\in T_z\mathbf X_{\mathrm{Euler}}(v_0,\mathcal A).$
	Write
	$$
	(w,H,S)
	:=
	(\Phi_z^{\mathbf E})^{-1}
	\bigl(d(\jmath_{\mathrm{Eu}})_z(\xi)\bigr).
	$$
	Then
	\begin{equation}
		\label{eq:tangent-euler-locus}
		H=v\otimes w+w\otimes v
		\qquad
		\text{in }
		\mathcal D'\bigl((0,T)\times\mathbb T^d;\mathbb R^{d\times d}\bigr).
	\end{equation}
\end{proposition}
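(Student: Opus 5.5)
The idea is to differentiate the defining constraint $Q=v\otimes v$ of the Euler locus, exactly as the weak-Euler constraint was differentiated in Proposition~\ref{prop:weak-linearized-divfree}. I will use the map
$$
\mathcal C\colon\mathbf E\longrightarrow\mathcal Q,\qquad \mathcal C(v,Q,R):=Q-\iota_{\mathcal Q}(v\otimes v),
$$
where $\iota_{\mathcal Q}\colon L^\infty(0,T;L^1(\mathbb T^d;\mathbb R^{d\times d}))\to\mathcal Q$ sends an $L^\infty_tL^1_x$ tensor to the distribution it defines.

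\textbf{Smoothness and differential of $\mathcal C$.} The embedding $\iota_{\mathcal Q}$ is continuous linear, since pairing with $\phi\in C_c^\infty$ is bounded by $\|A\|_{L^\infty_tL^1_x}\|\phi\|_{L^1_tL^\infty_x}$ uniformly on bounded sets of test functions. By Example~\ref{exa:quad} with $V=\mathcal V_w$, the map $v\mapsto v\otimes v$ into $L^\infty_tL^1_x$ is smooth, with differential $h\mapsto h\otimes v+v\otimes h$. The projection $(v,Q,R)\mapsto Q$ is continuous linear. Hence $\mathcal C$ is smooth, and by Proposition~\ref{prop:tangent-linear} and the chain rule (Proposition~\ref{prop:chain-rule}), under the identifications of Theorem~\ref{thm:convenient-tangent},
$$
d\mathcal C_z\bigl(\Phi_z^{\mathbf E}(w,H,S)\bigr)=\Phi^{\mathcal Q}_{\mathcal C(z)}\bigl(H-v\otimes w-w\otimes v\bigr).
$$
The projection onto the $v$-component is needed here because $\mathbf E$ is a product, and the product diffeology agrees with the canonical one by \cite[Proposition~3.11]{ADM2026}.

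\textbf{Differentiating the constraint.} By Definition~\ref{def:euler-locus}, the composite $\mathcal C\circ\jmath_{\mathrm{Eu}}$ is the constant map $0$. Proposition~\ref{prop:const-vanish} together with the chain rule then gives
$$
0=d(\mathcal C\circ\jmath_{\mathrm{Eu}})_z(\xi)=d\mathcal C_z\bigl(d(\jmath_{\mathrm{Eu}})_z(\xi)\bigr)=d\mathcal C_z\bigl(\Phi_z^{\mathbf E}(w,H,S)\bigr).
$$
Since $\Phi^{\mathcal Q}_0$ is injective, this yields $H=v\otimes w+w\otimes v$ in $\mathcal Q$, which is \eqref{eq:tangent-euler-locus}.

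To avoid any worry about convenience of $\mathcal Q$, one can instead argue scalarwise. For each test tensor $A\in C_c^\infty$, consider $\ell_A(v,Q,R):=\langle Q,A\rangle-G_A(v)$, with $G_A$ as in Lemma~\ref{lem:weak-observables}. This is a smooth real function vanishing on the Euler locus, and its differential is $\langle H,A\rangle-\langle v\otimes w+w\otimes v,A\rangle$. Letting $A$ range over all test tensors gives the identity in $\mathcal D'$. I would present this scalarwise version, since it only needs smoothness into $\mathbb R$.

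\textbf{Main obstacle.} The one delicate step is confirming that $v\mapsto v\otimes v$ is smooth for the \emph{norm} diffeology on $\mathcal V_w$. This holds by Example~\ref{exa:quad}, and it is exactly the reason the ambient diffeology was chosen from the norm topology rather than from $\tau_{\mathrm{cl}}$.
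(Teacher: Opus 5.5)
Your proposal is correct and follows essentially the same route as the paper. The paper defines the same flux-defect map $F_{\mathbf E}(v,Q,R)=Q-I\bigl(v\otimes v\bigr)$, gets its smoothness from Example~\ref{exa:quad} and the continuity of the embedding $I$ of $L^\infty_tL^1_x$ into $\mathcal D'$, and differentiates $F_{\mathbf E}\circ\jmath_{\mathrm{Eu}}=0$ with the chain rule. Your scalarwise variant via $\ell_A=\langle Q,A\rangle-G_A(v)$ replaces the $\mathcal D'$-valued map by real-valued observables, as in Lemma~\ref{lem:weak-observables}; it is only a cosmetic refinement and works equally well.
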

\begin{proof}
	Define the ambient nonlinear flux-defect map
	\[
	F_{\mathbf E}\colon\mathbf E\longrightarrow
	\mathcal D'\bigl((0,T)\times\mathbb T^d;\mathbb R^{d\times d}\bigr),
	\qquad
	F_{\mathbf E}(v,Q,R):=Q-\widetilde{qu}(v),
	\]
	where $\widetilde{qu}:=I\circ qu$ is the distributional realization of the
	quadratic map $qu(v):=v\otimes v$, and $I:L^\infty\bigl(0,T;L^1(\mathbb T^d;\mathbb R^{d\times d})\bigr)\hookrightarrow
	\mathcal D'\bigl((0,T)\times\mathbb T^d;\mathbb R^{d\times d}\bigr)$ denotes the linear continuous
	embedding into $\mathcal D'\bigl((0,T)\times\mathbb T^d;\mathbb R^{d\times d}\bigr)$.
	For the norm topology chosen on $\mathcal V_w$, the bilinear map
	$(v,w)\mapsto v\otimes w$ is continuous from
	$\mathcal V_w\times\mathcal V_w$ to
	$L^\infty\bigl(0,T;L^1(\mathbb T^d;\mathbb R^{d\times d})\bigr)$. By Example~\ref{exa:quad},
	$qu$ is smooth. Since $I$ is continuous linear for the strong
	distribution topology, $\widetilde{qu}=I\circ qu$ is smooth.
	By the definition of the Euler locus, $F_{\mathbf E}$ vanishes identically on
	$\mathbf X_{\mathrm{Euler}}(v_0,\mathcal A)$, 
	\begin{equation}
		\label{eq:tangent-euler-locus-vanishing}
		F_{\mathbf E}\circ\jmath_{\mathrm{Eu}}=0,
	\end{equation}
	where
	$\jmath_{\mathrm{Eu}}\colon
	\mathbf X_{\mathrm{Euler}}(v_0,\mathcal A)\hookrightarrow\mathbf E$
	is the inclusion.
	Differentiating \eqref{eq:tangent-euler-locus-vanishing} at $z$ and evaluating
	at $\xi\in T_z\mathbf X_{\mathrm{Euler}}(v_0,\mathcal A)$, the chain rule yields
	\begin{equation}
		\label{eq:tangent-euler-locus-chain}
		0
		=
		d\bigl(F_{\mathbf E}\circ\jmath_{\mathrm{Eu}}\bigr)_z(\xi)
		=
		d(F_{\mathbf E})_z\bigl(d(\jmath_{\mathrm{Eu}})_z(\xi)\bigr).
	\end{equation}
	Here $d(F_{\mathbf E})_z$ is evaluated at the image of $\xi$ in the ambient
	tangent space $T_z\mathbf E$, not at $\xi$ itself. By the definition of
	$(w,H,S)$ in the statement,
	\begin{equation}
		\label{eq:tangent-euler-locus-amb}
		d(\jmath_{\mathrm{Eu}})_z(\xi)=\Phi^{\mathbf E}_z(w,H,S).
	\end{equation}
	It remains to compute $d(F_{\mathbf E})_z\circ\Phi^{\mathbf E}_z$. The map
	$F_{\mathbf E}$ is linear in $Q$, independent of $R$, and quadratic in $v$;
	hence it is smooth and
	\[
	d(F_{\mathbf E})_z\bigl(\Phi^{\mathbf E}_z(w,H,S)\bigr)
	=
	H-d(\widetilde{qu})_v(w).
	\]
	Since $I$ is linear and continuous, it commutes with differentiation, and
	$d(qu)_v(w)=v\otimes w+w\otimes v$, so that
	\[
	d(\widetilde{qu})_v(w)
	=
	I\bigl(d(qu)_v(w)\bigr)
	=
	v\otimes w+w\otimes v
	\qquad
	\text{in }\mathcal D'\bigl((0,T)\times\mathbb T^d;\mathbb R^{d\times d}\bigr).
	\]
	Combining this with \eqref{eq:tangent-euler-locus-chain}
	and \eqref{eq:tangent-euler-locus-amb} gives
	$0=H-(v\otimes w+w\otimes v)$, which is exactly
	\eqref{eq:tangent-euler-locus}.
\end{proof}

\begin{corollary} 
	\label{cor:lin-er-along-euler-locus}
	Let
	$z=(v,Q,R)\in\mathbf X_{\mathrm{Euler}}(v_0,\mathcal A)$,
	$\xi\in T_z\mathbf X_{\mathrm{Euler}}(v_0,\mathcal A).$
	Write
	$$
	(w,H,S)
	:=
	(\Phi_z^{\mathbf E})^{-1}
	\bigl(d(\jmath_{\mathrm{Eu}})_z(\xi)\bigr).
	$$
	Then there exists
	$q\in\mathcal D'\bigl((0,T)\times\mathbb T^d\bigr)$
	such that
	\begin{equation}
		\label{eq:lin-er-along-euler-locus}
		\partial_t w
		+
		\operatorname{div}(v\otimes w+w\otimes v)
		+
		\nabla q
		=
		-\operatorname{div}S
	\end{equation}
	in
	$\mathcal D'\bigl((0,T)\times\mathbb T^d;\mathbb R^d\bigr),$
	and
	$\operatorname{div}w=0$
	in 
	$\mathcal D'\bigl((0,T)\times\mathbb T^d\bigr).$
\end{corollary}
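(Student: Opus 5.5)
The corollary is obtained by combining Proposition~\ref{prop:lin-lifted-er} with Proposition~\ref{prop:tangent-euler-locus}. The argument has three steps.

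\emph{Step 1: push $\xi$ into the lifted limit space.} Set $\xi':=d(\iota_{\mathrm{Eu}})_z(\xi)\in T_z\mathbf X(v_0,\mathcal A)$. Since $\jmath_{\mathrm{Eu}}=\jmath\circ\iota_{\mathrm{Eu}}$, the chain rule (Proposition~\ref{prop:chain-rule}) gives
$$
d(\jmath_{\mathrm{Eu}})_z(\xi)=d\jmath_z(\xi').
$$
Hence $\operatorname{Amb}_z(\xi')=(w,H,S)$, with the same triple as in the statement.

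\emph{Step 2: apply the linearized lifted equation.} Proposition~\ref{prop:lin-lifted-er}, applied to $\xi'$, gives $\operatorname{div}w=0$ in $\mathcal D'\bigl((0,T)\times\mathbb T^d\bigr)$. It also gives a distribution $q\in\mathcal D'\bigl((0,T)\times\mathbb T^d\bigr)$ with
$$
\partial_t w+\operatorname{div}H+\nabla q=-\operatorname{div}S.
$$

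\emph{Step 3: identify $H$.} Proposition~\ref{prop:tangent-euler-locus}, applied directly to $\xi$, gives $H=v\otimes w+w\otimes v$ as distributions. Since $\operatorname{div}$ is linear on $\mathcal D'$, we may substitute this into the equation from Step 2, which yields \eqref{eq:lin-er-along-euler-locus}.

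There is no real obstacle. The only point that needs care is Step 1: the triple used in Proposition~\ref{prop:lin-lifted-er} must be the same as the one used in Proposition~\ref{prop:tangent-euler-locus}. This holds by functoriality of the internal tangent map together with the definition of $\operatorname{Amb}_z$ through the fixed isomorphism $\Phi_z^{\mathbf E}$.
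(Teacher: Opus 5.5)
Your proposal is correct and matches the paper's proof. The paper also applies Proposition~\ref{prop:lin-lifted-er} to $d(\iota_{\mathrm{Eu}})_z(\xi)$ and then substitutes $H=v\otimes w+w\otimes v$ from Proposition~\ref{prop:tangent-euler-locus}. Your explicit chain-rule check that both propositions refer to the same triple $(w,H,S)$ is the one step the paper leaves implicit.
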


\begin{proof}
	Apply Proposition~\ref{prop:lin-lifted-er} to
	$d(\iota_{\mathrm{Eu}})_z(\xi)\in T_z\mathbf X(v_0,\mathcal A).$
	This yields
	$$
	\partial_t w+\operatorname{div}H+\nabla q
	=
	-\operatorname{div}S
	$$
	for some
	$q\in\mathcal D'\bigl((0,T)\times\mathbb T^d\bigr),$
	together with
	$\operatorname{div}w=0.$
	Using Proposition~\ref{prop:tangent-euler-locus}, we substitute
	$H=v\otimes w+w\otimes v$
	and obtain \eqref{eq:lin-er-along-euler-locus}.
\end{proof}
 
\subsection{Observable kernels}
\label{subsec:observable-kernels}

We now study the infinitesimal separation properties of natural linear probe
observables on the lifted limit space
$\mathbf X(v_0,\mathcal A)\subseteq \mathbf E$.
Let
$\jmath:\mathbf X(v_0,\mathcal A)\hookrightarrow \mathbf E$
denote the canonical inclusion. Since $\mathbf X(v_0,\mathcal A)$ carries the
subspace diffeology, the map $\jmath$ is smooth. Moreover, every observable
considered below is the restriction to $\mathbf X(v_0,\mathcal A)$ of a
continuous linear functional on the ambient weak space $\mathbf E$; hence these
observables are smooth.

For
$z\in \mathbf X(v_0,\mathcal A)$
and
$\xi\in T_z\mathbf X(v_0,\mathcal A)$,
we use the ambient realization convention
$$
\operatorname{Amb}_z(\xi)
:=
(\Phi_z^{\mathbf E})^{-1}\bigl(d\jmath_z(\xi)\bigr)
\in \mathbf E.
$$
Whenever
$\operatorname{Amb}_z(\xi)=(w,H,S),$
the components $w,H,S$ are understood as the velocity, flux, and stress
components of the ambient tangent realization.

\begin{definition}
	\label{def:observable-kernel}
	Let
	$z\in \mathbf X(v_0,\mathcal A)$,
	and let $\mathcal F$ be a family of real-valued observables on
	$\mathbf X(v_0,\mathcal A)$.
	The \emph{observable kernel} of $\mathcal F$ at $z$ is
	$$
	\ker_z(\mathcal F)
	:=
	\left\{
	\xi\in T_z\mathbf X(v_0,\mathcal A)
	\;\middle|\;
	dF_z(\xi)=0
	\quad\text{for every }F\in\mathcal F
	\right\}.
	$$
\end{definition}

We consider the following nested families of linear probe observables:
$$
\mathcal F_{\mathrm{vel}}
:=
\left\{
(v,Q,R)\longmapsto
\int_0^T\!\!\int_{\mathbb T^d}
v\cdot\psi\,dx\,dt
\;\middle|\;
\psi\in C_c^\infty\!\bigl((0,T)\times\mathbb{T}^d;\mathbb{R}^d\bigr)
\right\},
$$
$$
\mathcal F_{\mathrm{flux}}
:=
\mathcal F_{\mathrm{vel}}
\cup
\left\{
(v,Q,R)\longmapsto
\langle Q,\Psi\rangle
\;\middle|\;
\Psi\in
C_c^\infty\bigl((0,T)\times\mathbb T^d;\mathbb R^{d\times d}\bigr)
\right\},
$$
and
$$
\mathcal F_{\mathrm{full}}
:=
\mathcal F_{\mathrm{flux}}
\cup
\left\{
(v,Q,R)\longmapsto
\langle R,\Theta\rangle
\;\middle|\;
\Theta\in
C_c^\infty\bigl((0,T)\times\mathbb T^d;\mathcal S^d_0\bigr)
\right\}.
$$

\begin{theorem}
	\label{thm:kernel-characterization}
	Let
	$z=(v,Q,R)\in \mathbf X(v_0,\mathcal A)$,
	$\xi\in T_z\mathbf X(v_0,\mathcal A)$,
	and write
	$\operatorname{Amb}_z(\xi)=(w,H,S).$
	Then
	\begin{align*}
		\ker_z(\mathcal F_{\mathrm{vel}})
		&=
		\left\{
		\xi\in T_z\mathbf X(v_0,\mathcal A)
		\;\middle|\;
		w=0
		\right\},
		\\
		\ker_z(\mathcal F_{\mathrm{flux}})
		&=
		\left\{
		\xi\in T_z\mathbf X(v_0,\mathcal A)
		\;\middle|\;
		w=0,\ H=0
		\right\},
		\\
		\ker_z(\mathcal F_{\mathrm{full}})
		&=
		\left\{
		\xi\in T_z\mathbf X(v_0,\mathcal A)
		\;\middle|\;
		w=0,\ H=0,\ S=0
		\right\}.
	\end{align*}
	In particular,
	$$
	\ker_z(\mathcal F_{\mathrm{full}})
	=
	\ker(d\jmath_z).
	$$
	Thus the three observable families detect, respectively, the velocity
	component, the pair $(v,Q)$, and the full ambient realization, modulo the
	possible noninjectivity of the tangent map $d\jmath_z$.
\end{theorem}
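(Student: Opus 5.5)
Each observable in the three families is the restriction to $\mathbf X(v_0,\mathcal A)$ of a continuous linear functional $\ell$ on $\mathbf E$, so $F=\ell\circ\jmath$. The first step is therefore to compute $dF_z$ through the chain rule (Proposition~\ref{prop:chain-rule}) and Proposition~\ref{prop:tangent-linear} with $m=1$. For $\xi\in T_z\mathbf X(v_0,\mathcal A)$ with $\operatorname{Amb}_z(\xi)=(w,H,S)$ this gives
$$
dF_z(\xi)=d\ell_z\bigl(d\jmath_z(\xi)\bigr)=d\ell_z\bigl(\Phi^{\mathbf E}_z(w,H,S)\bigr)=\ell(w,H,S),
$$
where we identify $T_{\ell(z)}\mathbb R\cong\mathbb R$. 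Applied to the three types of probes, this yields $\int\!\!\int w\cdot\psi$, $\langle H,\Psi\rangle$ and $\langle S,\Theta\rangle$ respectively.

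Next, each equality of kernels reduces to a separation statement for test functions. Vanishing of $\int\!\!\int w\cdot\psi$ for all $\psi\in C_c^\infty((0,T)\times\mathbb T^d;\mathbb R^d)$ forces $w=0$ as a distribution, hence $w=0$ almost everywhere, hence $w=0$ in $\mathcal V_w=L^\infty_tL^2_x$, because $L^1_{\mathrm{loc}}$ functions are determined by their action on test functions. For $H\in\mathcal Q=\mathcal D'$, vanishing on all $\Psi$ means $H=0$ by definition. For $S\in\mathcal R$, the space is $\mathcal S^d_0$-valued distributions, and it is paired with $\mathcal S^d_0$-valued test fields $\Theta$. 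Since the Frobenius pairing restricted to $\mathcal S^d_0$ is nondegenerate, testing against $\Theta=\theta A$ with $A$ running over a basis of $\mathcal S^d_0$ recovers every component of $S$, so $S=0$. The reverse inclusions are immediate from the displayed formula for $dF_z(\xi)$. Because the families are nested, the three kernels come out as stated.

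Finally, $\ker_z(\mathcal F_{\mathrm{full}})=\{\xi:\operatorname{Amb}_z(\xi)=0\}$. Since $\Phi^{\mathbf E}_z$ is an isomorphism by Theorem~\ref{thm:convenient-tangent}, this set equals $\ker d\jmath_z$.

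The main point requiring care is the identification $d\ell_z\circ\Phi^{\mathbf E}_z=\ell$ on the product $\mathbf E$, whose factors $\mathcal Q$ and $\mathcal R$ carry the strong distribution topology. I would handle this with Proposition~\ref{prop:tangent-linear}, checking that each probe is continuous for that topology: evaluation at a fixed test function is continuous on strong $\mathcal D'$, and pairing against $\psi$ is continuous on $L^\infty_tL^2_x$. Once that is in place, everything else is routine.
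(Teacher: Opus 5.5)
Your proposal is correct and is essentially the paper's proof. The paper likewise computes each probe's differential as the pairing of $\operatorname{Amb}_z(\xi)=(w,H,S)$ with the corresponding test field, using the chain rule and linearity of the ambient functional. It then concludes by separation of distributions by test fields, and reads off $\ker_z(\mathcal F_{\mathrm{full}})=\ker(d\jmath_z)$ from $\operatorname{Amb}_z=(\Phi_z^{\mathbf E})^{-1}\circ d\jmath_z$. Your extra checks fill in details the paper leaves implicit: continuity of the probes for the strong distribution topology, the passage from distributional vanishing of $w$ to $w=0$ in $L^\infty_tL^2_x$, and nondegeneracy of the Frobenius pairing on $\mathcal S^d_0$ for the stress probes.
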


\begin{proof}
	Let
	$\operatorname{Amb}_z(\xi)=(w,H,S).$
	Since all observables under consideration are restrictions of ambient linear
	functionals on $\mathbf E$, their differentials at $z$ are obtained by pairing
	$d\jmath_z(\xi)$, equivalently $\operatorname{Amb}_z(\xi)$, with the
	corresponding test fields.
	
	For
	$\psi\in C_c^\infty\!\bigl((0,T)\times\mathbb{T}^d;\mathbb{R}^d\bigr)$,
	the corresponding velocity observable satisfies
	$$
	d\!\left(
	(v,Q,R)\mapsto
	\int_0^T\!\!\int_{\mathbb T^d} v\cdot\psi\,dx\,dt
	\right)_z(\xi)
	=
	\int_0^T\!\!\int_{\mathbb T^d} w\cdot\psi\,dx\,dt.
	$$
	Hence all velocity probes vanish on $\xi$ if and only if $w=0$.

	Similarly, for
	$\Psi\in C_c^\infty\bigl((0,T)\times\mathbb T^d;\mathbb R^{d\times d}\bigr)$
	and
	$\Theta\in C_c^\infty\bigl((0,T)\times\mathbb T^d;\mathcal S^d_0\bigr)$,
	we have
	$$
	d\langle Q,\Psi\rangle_z(\xi)=\langle H,\Psi\rangle,
	\qquad
	d\langle R,\Theta\rangle_z(\xi)=\langle S,\Theta\rangle.
	$$
	Since smooth compactly supported test tensors separate distributions, the
	vanishing of all flux probes is equivalent to $H=0$, and the vanishing of all
	stress probes is equivalent to $S=0$. This proves the three kernel
	characterizations.
	Finally,
	$$
	\ker_z(\mathcal F_{\mathrm{full}})
	=
	\left\{
	\xi\in T_z\mathbf X(v_0,\mathcal A)
	\;\middle|\;
	\operatorname{Amb}_z(\xi)=(0,0,0)
	\right\}.
	$$
	By definition,
	$\operatorname{Amb}_z(\xi)
	=
	(\Phi_z^{\mathbf E})^{-1}\bigl(d\jmath_z(\xi)\bigr),$
	so
	$$
	\operatorname{Amb}_z(\xi)=0
	\quad\Longleftrightarrow\quad
	d\jmath_z(\xi)=0.
	$$
	Therefore
	$\ker_z(\mathcal F_{\mathrm{full}})
	=
	\ker(d\jmath_z).$
\end{proof}

\begin{corollary}
	\label{cor:nested-kernels}
	For every
	$z\in \mathbf X(v_0,\mathcal A)$,
	one has
	$$
	\ker_z(\mathcal F_{\mathrm{full}})
	\subseteq
	\ker_z(\mathcal F_{\mathrm{flux}})
	\subseteq
	\ker_z(\mathcal F_{\mathrm{vel}}).
	$$
	The first inclusion is strict whenever there exists
	$\xi\in T_z\mathbf X(v_0,\mathcal A)$
	with
	$$
	\operatorname{Amb}_z(\xi)=(0,0,S),
	\qquad
	S\neq 0.
	$$
	The second inclusion is strict whenever there exists
	$\xi\in T_z\mathbf X(v_0,\mathcal A)$
	with
	$$
	\operatorname{Amb}_z(\xi)=(0,H,S),
	\qquad
	H\neq 0.
	$$
\end{corollary}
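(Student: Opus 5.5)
The plan is to read everything off Theorem~\ref{thm:kernel-characterization}, which describes each of the three kernels as a set of $\xi\in T_z\mathbf X(v_0,\mathcal A)$ cut out by conditions on the components of $\operatorname{Amb}_z(\xi)=(w,H,S)$. The inclusions will then reduce to comparing conditions, and the strictness claims to exhibiting a single $\xi$ in the larger kernel but not the smaller.

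For the chain of inclusions, I would use that $\mathcal F_{\mathrm{vel}}\subseteq\mathcal F_{\mathrm{flux}}\subseteq\mathcal F_{\mathrm{full}}$ as families of observables. By Definition~\ref{def:observable-kernel}, a larger family imposes more vanishing conditions, so its kernel is smaller. This gives $\ker_z(\mathcal F_{\mathrm{full}})\subseteq\ker_z(\mathcal F_{\mathrm{flux}})\subseteq\ker_z(\mathcal F_{\mathrm{vel}})$ directly, with no need for the theorem. Equivalently, by Theorem~\ref{thm:kernel-characterization}, the defining conditions are $\{w=0,H=0,S=0\}$, $\{w=0,H=0\}$ and $\{w=0\}$, and each list contains the next.

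For strictness of the first inclusion, take the hypothesized $\xi$ with $\operatorname{Amb}_z(\xi)=(0,0,S)$ and $S\neq0$. By Theorem~\ref{thm:kernel-characterization}, $w=0$ and $H=0$ place $\xi$ in $\ker_z(\mathcal F_{\mathrm{flux}})$. Since $S\neq0$, the same theorem shows $\xi\notin\ker_z(\mathcal F_{\mathrm{full}})$. Concretely, there is some $\Theta\in C_c^\infty((0,T)\times\mathbb T^d;\mathcal S^d_0)$ with $\langle S,\Theta\rangle\neq0$, because test tensors separate distributions with values in $\mathcal S^d_0$. The second inclusion is handled the same way: the hypothesized $\xi$ with $\operatorname{Amb}_z(\xi)=(0,H,S)$ and $H\neq0$ has $w=0$, so it lies in $\ker_z(\mathcal F_{\mathrm{vel}})$, but $H\neq0$ excludes it from $\ker_z(\mathcal F_{\mathrm{flux}})$.

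There is no real obstacle, since the statement is conditional on the existence of such tangent vectors and is not an existence claim. The only point needing care is that the separation argument for $S$ pairs against trace-free symmetric test tensors. That suffices because $S$ takes values in $\mathcal S^d_0$, and the pairing on $\mathcal S^d_0$-valued fields is nondegenerate. This point is already implicit in the proof of Theorem~\ref{thm:kernel-characterization}.
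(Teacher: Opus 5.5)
Your proposal is correct and follows the paper's own argument: the inclusions and both strictness claims are read off from Theorem~\ref{thm:kernel-characterization}, exactly as you do. Your remark that the inclusions already follow from $\mathcal F_{\mathrm{vel}}\subseteq\mathcal F_{\mathrm{flux}}\subseteq\mathcal F_{\mathrm{full}}$ and Definition~\ref{def:observable-kernel} alone is a valid, slightly more elementary shortcut, but it does not change the substance of the argument.
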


\begin{proof}
	The inclusions follow immediately from
	Theorem~\ref{thm:kernel-characterization}, and the strictness assertions follow
	from the displayed nontrivial ambient realizations.
\end{proof}

\subsubsection{Stress-gauge directions}
\label{subsubsec:stress-gauge-directions}

The residual kernel of the velocity--flux observables is encoded by the stress
component of the ambient tangent realization.

\begin{definition}
	\label{def:stress-gauge-direction}
	Let
	$z\in \mathbf X(v_0,\mathcal A)$.
	A tangent vector
	$\xi\in T_z\mathbf X(v_0,\mathcal A)$
	is called a \emph{stress-gauge direction} if
	$\operatorname{Amb}_z(\xi)=(0,0,S)$
	for some
	$S\in
	\mathcal D'\bigl((0,T)\times\mathbb T^d;\mathcal S^d_0\bigr).$
\end{definition}

\begin{theorem}
	\label{thm:stress-gauge-characterization}
	Let
	$z=(v,Q,R)\in \mathbf X(v_0,\mathcal A)$,
	$\xi\in T_z\mathbf X(v_0,\mathcal A)$,
	and write
	$\operatorname{Amb}_z(\xi)=(w,H,S).$
	Then the following are equivalent:
	\begin{enumerate}
		\item
		$\xi$ is a stress-gauge direction;
		\item
		$\xi\in \ker_z(\mathcal F_{\mathrm{flux}})$;
		\item
		$w=0$ and $H=0$.
	\end{enumerate}
	If these conditions hold, then there exists
	$\pi\in \mathcal D'\bigl((0,T)\times\mathbb T^d\bigr)$
	such that
	\begin{equation}
		\label{eq:stress-gauge-gradient-div}
		\operatorname{div}S=\nabla\pi
		\qquad
		\text{in }
		\mathcal D'\bigl((0,T)\times\mathbb T^d;\mathbb R^d\bigr).
	\end{equation}
	Moreover,
	$$
	\xi\in \ker_z(\mathcal F_{\mathrm{full}})
	\quad\Longleftrightarrow\quad
	S=0.
	$$
\end{theorem}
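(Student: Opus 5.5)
The plan is to deduce everything from Theorem~\ref{thm:kernel-characterization} and Proposition~\ref{prop:lin-lifted-er}, with almost no new analysis.

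\emph{The equivalences.} Write $\operatorname{Amb}_z(\xi)=(w,H,S)$. The implication (1)$\Rightarrow$(3) is immediate from Definition~\ref{def:stress-gauge-direction}: if $\operatorname{Amb}_z(\xi)=(0,0,S)$, then comparing components in $\mathbf E=\mathcal V_w\times\mathcal Q\times\mathcal R$ gives $w=0$ and $H=0$. For (3)$\Rightarrow$(1), note that the third component of $\operatorname{Amb}_z(\xi)$ lies in $\mathcal R=\mathcal D'\bigl((0,T)\times\mathbb T^d;\mathcal S^d_0\bigr)$ by construction. Hence $\operatorname{Amb}_z(\xi)=(0,0,S)$ with $S$ of the required type. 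The equivalence (2)$\Leftrightarrow$(3) is exactly the second line of Theorem~\ref{thm:kernel-characterization}.

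\emph{The gradient identity.} Assume the conditions hold. By Proposition~\ref{prop:lin-lifted-er} there is $q\in\mathcal D'\bigl((0,T)\times\mathbb T^d\bigr)$ with
\[
\partial_t w+\operatorname{div}H+\nabla q=-\operatorname{div}S .
\]
Substituting $w=0$ and $H=0$ kills the first two terms, since $\partial_t$ and $\operatorname{div}$ are linear on distributions. This leaves $\operatorname{div}S=-\nabla q$, so \eqref{eq:stress-gauge-gradient-div} holds with $\pi:=-q$.

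The only point needing care is that Proposition~\ref{prop:lin-lifted-er} is applied to the same realization $(w,H,S)$. That is automatic, because both results use the same map $\operatorname{Amb}_z=(\Phi_z^{\mathbf E})^{-1}\circ d\jmath_z$. One could instead argue directly: pair with divergence-free test fields $\psi$ via $\ell_\psi$, get $\langle S,\nabla\psi\rangle=0$, and invoke Lemma~\ref{lem:distributional-hodge}. Citing the proposition is cleaner.

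\emph{The final equivalence.} By the third line of Theorem~\ref{thm:kernel-characterization}, $\xi\in\ker_z(\mathcal F_{\mathrm{full}})$ if and only if $w=0$, $H=0$ and $S=0$. Under the standing hypothesis $w=H=0$, this reduces to $S=0$.

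I do not expect a genuine obstacle. The one subtlety worth stating explicitly is that ``$S\in\mathcal D'(\dots;\mathcal S^d_0)$'' in Definition~\ref{def:stress-gauge-direction} is automatic from the structure of $\mathbf E$, so condition (3) needs no separate trace-free check.
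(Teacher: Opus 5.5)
Your proposal is correct and follows the paper's proof essentially verbatim. The three equivalences come from Definition~\ref{def:stress-gauge-direction} and Theorem~\ref{thm:kernel-characterization}; the identity $\operatorname{div}S=\nabla\pi$ comes from substituting $w=H=0$ into Proposition~\ref{prop:lin-lifted-er} with $\pi:=-q$; and the final equivalence again comes from Theorem~\ref{thm:kernel-characterization}, read, as you rightly make explicit, under the standing hypothesis $w=H=0$.
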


\begin{proof}
	The equivalence of the first three conditions follows from
	Definition~\ref{def:stress-gauge-direction} and
	Theorem~\ref{thm:kernel-characterization}.
	By Proposition~\ref{prop:lin-lifted-er}, the ambient realization satisfies
	$$
	\partial_t w+\operatorname{div}H+\nabla q
	=
	-\operatorname{div}S
	$$
	for some
	$q\in\mathcal D'\bigl((0,T)\times\mathbb T^d\bigr).$
	If $w=0$ and $H=0$, then
	$\nabla q=-\operatorname{div}S.$
	Setting
	$\pi=-q$
	gives
	$\operatorname{div}S=\nabla\pi.$
	The last assertion follows again from
	Theorem~\ref{thm:kernel-characterization}.
\end{proof}

\begin{proposition}
	\label{prop:explicit-stress-gauge-tensors}
	Let
	$\varphi\in C^\infty\bigl((0,T)\times\mathbb T^d\bigr)$,
	and define
	$S_{ij}
	:=
	\partial_i\partial_j\varphi
	-\frac1d(\Delta\varphi)\delta_{ij}.$
	Then
	$S\in
	C^\infty\bigl((0,T)\times\mathbb T^d;\mathcal S^d_0\bigr)$
	and
	$$
	\operatorname{div}S=\nabla\pi,
	\qquad
	\pi=\frac{d-1}{d}\Delta\varphi.
	$$
\end{proposition}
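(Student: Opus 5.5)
This is a direct computation in two parts: first the regularity and trace-freeness of $S$, then the divergence identity.

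For regularity and the target space, note that $\partial_i\partial_j\varphi$ and $\Delta\varphi$ are smooth on $(0,T)\times\mathbb T^d$ because $\varphi$ is. The matrix $(\partial_i\partial_j\varphi)$ is the spatial Hessian, which is symmetric by equality of mixed partials. Since $\mathrm{Id}$ is symmetric, $S$ is symmetric. Its trace is
$$\operatorname{tr}S=\sum_i\partial_i\partial_i\varphi-\tfrac1d(\Delta\varphi)\,d=\Delta\varphi-\Delta\varphi=0 .$$
Hence $S$ takes values in $\mathcal S^d_0$. Equivalently, $S=(\nabla^2\varphi)^{\circ}$ in the notation $A^{\circ}$ of Subsection~\ref{subsec:ER-subsolutions}.

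For the divergence identity, use the paper's convention $(\operatorname{div}A)_j=\partial_iA_{ij}$ with summation over $i$. This gives
$$(\operatorname{div}S)_j=\partial_i\partial_i\partial_j\varphi-\tfrac1d\,\partial_i\bigl((\Delta\varphi)\delta_{ij}\bigr)=\partial_j\Delta\varphi-\tfrac1d\,\partial_j\Delta\varphi=\tfrac{d-1}{d}\,\partial_j\Delta\varphi .$$
The first term uses that spatial partial derivatives commute on smooth functions, so $\sum_i\partial_i\partial_i\partial_j\varphi=\partial_j\Delta\varphi$. Therefore $\operatorname{div}S=\nabla\pi$ with $\pi=\tfrac{d-1}{d}\Delta\varphi$. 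The identity holds pointwise, and hence also in $\mathcal D'$.

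I do not expect a real obstacle here. The only care needed is with index conventions: one should check that symmetry of $S$ makes the row-wise and column-wise divergences agree, so the result does not depend on the convention chosen. Optionally, one can remark that this $S$ lies in the class $\mathcal G$ of Corollary~\ref{cor:smooth-stress-gauge}, since its divergence is a gradient. This connects the statement to the stress-gauge condition \eqref{eq:stress-gauge-gradient-div} of Theorem~\ref{thm:stress-gauge-characterization}.
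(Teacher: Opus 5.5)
Your proposal is correct and matches the paper's proof essentially step for step: symmetry comes from the Hessian, the trace is $\Delta\varphi-\Delta\varphi=0$, and a one-line divergence computation gives $\tfrac{d-1}{d}\nabla\Delta\varphi$. The paper contracts on the second index, $\partial_jS_{ij}$, which agrees with your first-index convention by symmetry, as you note. Your optional remark placing $S$ in $\mathcal G$ would additionally require $\varphi$ to extend smoothly to $[0,T]\times\mathbb T^d$, since $\mathcal G$ is defined for fields smooth up to the closed time interval, but this plays no role in the proof.
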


\begin{proof}
	Symmetry is immediate. Also,
	$$
	\operatorname{tr}S
	=
	\Delta\varphi
	-\frac1d(\Delta\varphi)\delta_{ii}
	=
	\Delta\varphi-\Delta\varphi
	=
	0,
	$$
	so $S$ takes values in $\mathcal S^d_0$. Finally,
	$$
	(\operatorname{div}S)_i
	=
	\partial_jS_{ij}
	=
	\partial_i\Delta\varphi
	-\frac1d\partial_i\Delta\varphi
	=
	\frac{d-1}{d}\partial_i\Delta\varphi.
	$$
	Hence
	$$
	\operatorname{div}S
	=
	\nabla\left(\frac{d-1}{d}\Delta\varphi\right),
	$$
	as claimed.
\end{proof}

\begin{remark}
	\label{rem:explicit-stress-gauge-vs-smooth}
	Since $\Delta\varphi$ has vanishing spatial mean, the tensor $S$ of
	Proposition~\ref{prop:explicit-stress-gauge-tensors} together with
	$q:=-\tfrac{d-1}{d}\Delta\varphi$ lies in the space of
	Corollary~\ref{cor:smooth-stress-gauge}\,(\ref{it:sg-free-pressure}), and it
	lies in (\ref{it:sg-frozen-pressure}) only when $\Delta\varphi$ is
	$x$-independent. This is the precise sense in which such tensors are smooth
	prototypes of stress-gauge directions: they solve the necessary condition
	\eqref{eq:stress-gauge-gradient-div}, but no claim is made here that they are
	realized by an actual tangent vector of $\mathbf X(v_0,\mathcal A)$.
\end{remark}

\subsubsection{Euler-locus consequences}
\label{subsubsec:euler-locus-observable-kernels}

We now restrict the preceding kernel analysis to the Euler locus
$\mathbf X_{\mathrm{Euler}}(v_0,\mathcal A)$.

\begin{corollary}
	\label{cor:vel-flux-kernels-coincide-euler}
	Let
	$z\in \mathbf X_{\mathrm{Euler}}(v_0,\mathcal A)$.
	Then
	$$
	\ker_z(\mathcal F_{\mathrm{vel}})
	\cap
	T_z\mathbf X_{\mathrm{Euler}}(v_0,\mathcal A)
	=
	\ker_z(\mathcal F_{\mathrm{flux}})
	\cap
	T_z\mathbf X_{\mathrm{Euler}}(v_0,\mathcal A).
	$$
	Equivalently, on Euler tangents the velocity observables already determine the
	flux component.
\end{corollary}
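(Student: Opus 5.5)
Here $T_z\mathbf X_{\mathrm{Euler}}(v_0,\mathcal A)$ is read as a subspace of $T_z\mathbf X(v_0,\mathcal A)$ through its image under $d(\iota_{\mathrm{Eu}})_z$; both sides of the claimed identity are taken inside $T_z\mathbf X(v_0,\mathcal A)$. The inclusion ``$\supseteq$'' is immediate from Corollary~\ref{cor:nested-kernels}, since $\ker_z(\mathcal F_{\mathrm{flux}})\subseteq\ker_z(\mathcal F_{\mathrm{vel}})$. So the work is in the reverse inclusion.

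For ``$\subseteq$'', let $\xi\in T_z\mathbf X_{\mathrm{Euler}}(v_0,\mathcal A)$ and suppose $\eta:=d(\iota_{\mathrm{Eu}})_z(\xi)$ lies in $\ker_z(\mathcal F_{\mathrm{vel}})$. Write $(w,H,S):=\operatorname{Amb}_z(\eta)$. By the chain rule (Proposition~\ref{prop:chain-rule}) applied to $\jmath_{\mathrm{Eu}}=\jmath\circ\iota_{\mathrm{Eu}}$, this triple coincides with $(\Phi_z^{\mathbf E})^{-1}\bigl(d(\jmath_{\mathrm{Eu}})_z(\xi)\bigr)$. Hence Proposition~\ref{prop:tangent-euler-locus} applies and gives $H=v\otimes w+w\otimes v$ in $\mathcal D'$.

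By Theorem~\ref{thm:kernel-characterization}, $\eta\in\ker_z(\mathcal F_{\mathrm{vel}})$ means exactly $w=0$. Substituting $w=0$ gives $H=v\otimes 0+0\otimes v=0$. Theorem~\ref{thm:kernel-characterization} then yields $\eta\in\ker_z(\mathcal F_{\mathrm{flux}})$, which proves the equality.

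The ``equivalently'' clause follows from the same identity $H=v\otimes w+w\otimes v$: the flux component of an Euler tangent is a linear function of its velocity component, hence determined by it. The only delicate point is the bookkeeping of the identification between $T_z\mathbf X_{\mathrm{Euler}}$ and its image in $T_z\mathbf X$, together with the compatibility of the two ambient realizations. Since $d(\iota_{\mathrm{Eu}})_z$ need not be injective, the statement should be formulated on images, and that compatibility is exactly what the chain-rule step supplies.
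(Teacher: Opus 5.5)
Your proposal is correct and follows the paper's own argument: the inclusion $\supseteq$ comes from the nesting of kernels, and the inclusion $\subseteq$ combines $w=0$ (Theorem~\ref{thm:kernel-characterization}) with $H=v\otimes w+w\otimes v$ (Proposition~\ref{prop:tangent-euler-locus}). Your chain-rule check that $\operatorname{Amb}_z\bigl(d(\iota_{\mathrm{Eu}})_z(\xi)\bigr)$ coincides with the realization used in Proposition~\ref{prop:tangent-euler-locus} is bookkeeping that the paper leaves implicit; the same check also covers the reading of the intersection as a preimage rather than an image.
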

\begin{proof}
	By Theorem~\ref{thm:kernel-characterization},
	$$
	\xi\in\ker_z(\mathcal F_{\mathrm{vel}})
	\quad\Longleftrightarrow\quad
	w=0.
	$$
	For Euler tangents, Proposition~\ref{prop:tangent-euler-locus} gives
	$H=v\otimes w+w\otimes v.$
	Hence $w=0$ implies $H=0$. Using again
	Theorem~\ref{thm:kernel-characterization}, we conclude that
	$\xi\in\ker_z(\mathcal F_{\mathrm{flux}}).$
	The reverse inclusion is immediate from
	$\ker_z(\mathcal F_{\mathrm{flux}})
	\subseteq
	\ker_z(\mathcal F_{\mathrm{vel}}),$
	which holds on all of $\mathbf X(v_0,\mathcal A)$ by
	Theorem~\ref{thm:kernel-characterization}. This proves the equality.
\end{proof}
 
\begin{corollary}
	\label{cor:full-separation-euler}
	Let
	$z\in \mathbf X_{\mathrm{Euler}}(v_0,\mathcal A)$,
	and let
	$\jmath_{\mathrm{Eu}}:
	\mathbf X_{\mathrm{Euler}}(v_0,\mathcal A)
	\hookrightarrow
	\mathbf E$
	denote the canonical inclusion.
	Then
	$$
	\ker_z(\mathcal F_{\mathrm{full}})
	\cap
	T_z\mathbf X_{\mathrm{Euler}}(v_0,\mathcal A)
	=
	\ker\bigl(d(\jmath_{\mathrm{Eu}})_z\bigr).
	$$
	In particular, if
	$d(\jmath_{\mathrm{Eu}})_z$
	is injective, then
	$\ker_z(\mathcal F_{\mathrm{full}})
	\cap
	T_z\mathbf X_{\mathrm{Euler}}(v_0,\mathcal A)
	=
	\{0\}.$
\end{corollary}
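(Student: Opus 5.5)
The plan is to reduce the statement to Theorem~\ref{thm:kernel-characterization}, transporting it along the inclusion $\iota_{\mathrm{Eu}}$. The only subtlety is interpretive. $T_z\mathbf X_{\mathrm{Euler}}(v_0,\mathcal A)$ is not literally a subspace of $T_z\mathbf X(v_0,\mathcal A)$. We therefore read the intersection as the set of $\xi\in T_z\mathbf X_{\mathrm{Euler}}(v_0,\mathcal A)$ such that $d(\iota_{\mathrm{Eu}})_z(\xi)\in\ker_z(\mathcal F_{\mathrm{full}})$. Equivalently, it is the observable kernel of the restricted family $\{F\circ\iota_{\mathrm{Eu}}\mid F\in\mathcal F_{\mathrm{full}}\}$, and the chain rule (Proposition~\ref{prop:chain-rule}) shows that the two readings agree, since $d(F\circ\iota_{\mathrm{Eu}})_z=dF_z\circ d(\iota_{\mathrm{Eu}})_z$. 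This is the same convention as in Corollary~\ref{cor:vel-flux-kernels-coincide-euler}.

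The argument then runs as follows. Fix $\xi\in T_z\mathbf X_{\mathrm{Euler}}(v_0,\mathcal A)$ and put $\eta:=d(\iota_{\mathrm{Eu}})_z(\xi)\in T_z\mathbf X(v_0,\mathcal A)$. By the last assertion of Theorem~\ref{thm:kernel-characterization}, $\eta\in\ker_z(\mathcal F_{\mathrm{full}})$ if and only if $d\jmath_z(\eta)=0$. Since $\jmath_{\mathrm{Eu}}=\jmath\circ\iota_{\mathrm{Eu}}$, the chain rule gives
$$
d\jmath_z(\eta)=d\jmath_z\bigl(d(\iota_{\mathrm{Eu}})_z(\xi)\bigr)=d(\jmath_{\mathrm{Eu}})_z(\xi).
$$
Hence $\xi$ lies in the left-hand side exactly when $d(\jmath_{\mathrm{Eu}})_z(\xi)=0$, which proves the equality. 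One may phrase the same step through ambient realizations: $\operatorname{Amb}_z(\eta)=(\Phi_z^{\mathbf E})^{-1}\bigl(d(\jmath_{\mathrm{Eu}})_z(\xi)\bigr)=(w,H,S)$. The full family detects $(w,H,S)=(0,0,0)$, and since $\Phi_z^{\mathbf E}$ is an isomorphism (Theorem~\ref{thm:convenient-tangent}), this is equivalent to $d(\jmath_{\mathrm{Eu}})_z(\xi)=0$.

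The ``in particular'' clause is then immediate: if $d(\jmath_{\mathrm{Eu}})_z$ is injective, its kernel is $\{0\}$.

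I expect the main (mild) obstacle to be purely bookkeeping, namely making the meaning of the intersection precise so that the chain rule applies. Once the pullback convention above is fixed, no analytic input is needed. In particular, the Euler-locus constraint $H=v\otimes w+w\otimes v$ from Proposition~\ref{prop:tangent-euler-locus} plays no role here; it only matters for the velocity/flux comparison in Corollary~\ref{cor:vel-flux-kernels-coincide-euler}.
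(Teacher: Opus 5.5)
Your proposal is correct and follows essentially the paper's route. The paper reruns Theorem~\ref{thm:kernel-characterization} with $\jmath_{\mathrm{Eu}}$ in place of $\jmath$ (a tangent $\xi$ lies in the left-hand side iff its ambient realization vanishes, iff $d(\jmath_{\mathrm{Eu}})_z(\xi)=0$), whereas you factor $\jmath_{\mathrm{Eu}}=\jmath\circ\iota_{\mathrm{Eu}}$ and apply the chain rule, which is the same argument; your explicit pullback reading of the intersection usefully spells out a convention the paper leaves implicit.
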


\begin{proof}
	This is the restriction of
	Theorem~\ref{thm:kernel-characterization} to the Euler tangent space, with
	$\jmath_{\mathrm{Eu}}$ in place of $\jmath$. Explicitly, a tangent vector
	$\xi\in T_z\mathbf X_{\mathrm{Euler}}(v_0,\mathcal A)$
	lies in the left-hand side if and only if its ambient realization vanishes, and
	this is equivalent to
	$d(\jmath_{\mathrm{Eu}})_z(\xi)=0.$
	The final assertion is immediate.
\end{proof}

\section{A mixture viewpoint on Euler--Reynolds}
\label{sec:mixtures}

In this section we give a concrete finite-dimensional realization of the
observable kernels introduced in Section~\ref{sec:lifted-limit-space}. The
purpose is not to introduce a new completion or a new convex integration
scheme, but to exhibit explicit smooth maps in the lifted
Euler--Reynolds ambient space, compute their differentials, and relate the
resulting directions to velocity, flux, and full lifted observability.

\subsection{Finite mixtures in the lifted ambient variables}
\label{subsec:finite-mixtures}

Let
$x_i=(v_i,Q_i,R_i)\in\mathbf E,$
$1\leq i\leq N,$
be a finite family of lifted Euler--Reynolds states satisfying
$\operatorname{div}v_i=0$
and
$$
\partial_t v_i+\operatorname{div}Q_i+\nabla p_i
=
-\operatorname{div}R_i
$$
for some
$p_i\in\mathcal D'\bigl((0,T)\times\mathbb T^d\bigr).$
Two regimes must be carefully distinguished throughout this section.
In the general regime the flux $Q_i$ is an independent ambient variable,
unrelated to $v_i$; in the exact regime one has
$$
x_i=\Lambda(v_i,p_i,R_i)
=
(v_i,v_i\otimes v_i,R_i),
$$
as is the case for smooth strict subsolutions. Only in the exact regime may the
flux be interpreted as a second moment, and only then does the quadratic
covariance defect introduced in \eqref{eq:mixture-covariance-defect} below carry
its probabilistic meaning.

Let
$$
\Delta_N
:=
\left\{
\lambda=(\lambda_1,\ldots,\lambda_N)\in\mathbb R^N
\;\middle|\;
\lambda_i\geq0,\quad
\sum_{i=1}^N\lambda_i=1
\right\}
$$
be the standard simplex, and let $\Delta_N^\circ$ denote its relative
interior. For $\lambda\in\Delta_N$, define
$$
v[\lambda]:=\sum_{i=1}^N\lambda_i v_i,
\qquad
Q[\lambda]:=\sum_{i=1}^N\lambda_i Q_i,
\qquad
R[\lambda]:=\sum_{i=1}^N\lambda_i R_i.
$$

\begin{definition} 
	\label{def:lifted-mixture-map}
	The map
	$$
	P:\Delta_N\longrightarrow\mathbf E,
	\qquad
	P(\lambda):=(v[\lambda],Q[\lambda],R[\lambda]),
	$$
	is called the \emph{lifted ambient mixture map} associated with the family
	$\{x_i\}_{i=1}^N$.
\end{definition}

Set
$$
p[\lambda]:=\sum_{i=1}^N\lambda_i p_i.
$$
By linearity of the lifted Euler--Reynolds identities,
$$
\operatorname{div}v[\lambda]=0
$$
and
$$
\partial_t v[\lambda]
+\operatorname{div}Q[\lambda]
+\nabla p[\lambda]
=
-\operatorname{div}R[\lambda].
$$

\begin{remark}
	\label{rem:mixture-ambient-only}
	The mixture map $P$ is, a priori, a map into the lifted ambient space
	$\mathbf E$. Even if each
	$x_i\in\mathbf X(v_0,\mathcal A),$
	the barycentric combination $P(\lambda)$ need not belong to
	$\mathbf X(v_0,\mathcal A)$, because the lifted limit space need not be
	convex.
	If, for the relevant values of $\lambda$, one additionally knows that
	$P(\lambda)\in\mathbf X(v_0,\mathcal A),$
	then $P$ may be regarded as a map into the lifted limit space, and its
	differential gives actual tangent directions there. Without this additional
	realizability assumption, $P$ should be interpreted only as a
	finite-dimensional family in the linear lifted Euler--Reynolds ambient
	model. Sufficient conditions are given in
	Subsection~\ref{subsec:mixture-realizability}.
\end{remark}

Suppose now that each phase lies in the exact Euler locus, so that
$Q_i=v_i\otimes v_i.$
Then
$$
Q[\lambda]
=
\sum_{i=1}^N\lambda_i v_i\otimes v_i.
$$
The quadratic fluctuation of the mixture is
\begin{equation}
	\label{eq:mixture-covariance-defect}
	C[\lambda]
	:=
	Q[\lambda]-v[\lambda]\otimes v[\lambda]
	=
	\sum_{i=1}^N\lambda_i
	\bigl(v_i-v[\lambda]\bigr)\otimes\bigl(v_i-v[\lambda]\bigr).
\end{equation}
Its trace-free part is
$$
\operatorname{dev}C[\lambda]
:=
C[\lambda]
-\frac1d\bigl(\operatorname{tr}C[\lambda]\bigr)I.
$$
Thus $\operatorname{dev}C[\lambda]$ measures the deviatoric part of the
quadratic covariance defect.

\begin{lemma}
	\label{lem:mixture-covariance-sign}
	Let $Q_i=v_i\otimes v_i$ for all $i$ and let $\lambda\in\Delta_N$. Then, for
	a.e.\ $(t,x)\in(0,T)\times\mathbb T^d$,
	$$
	C[\lambda](t,x)\geq0,
	\qquad
	\operatorname{tr}C[\lambda](t,x)
	=
	\sum_{i=1}^N\lambda_i\bigl|v_i(t,x)-v[\lambda](t,x)\bigr|^2
	\geq0,
	$$
	and
	$$
	\bigl|\operatorname{dev}C[\lambda]\bigr|
	\leq
	\bigl|C[\lambda]\bigr|
	\leq
	\operatorname{tr}C[\lambda]
	\leq
	\frac12\max_{1\leq i,j\leq N}\bigl|v_i-v_j\bigr|^2,
	$$
	where $|\cdot|$ denotes the Frobenius norm. Moreover, setting
	\begin{equation}
		\label{eq:absorbed-mixture}
		\widetilde R[\lambda]
		:=
		R[\lambda]+\operatorname{dev}C[\lambda]
		\in\mathcal S^d_0,
		\qquad
		p_{\mathrm{rec}}[\lambda]
		:=
		p[\lambda]+\frac1d\operatorname{tr}C[\lambda],
	\end{equation}
	one has
	$$
	\partial_t v[\lambda]
	+\operatorname{div}\bigl(v[\lambda]\otimes v[\lambda]\bigr)
	+\nabla p_{\mathrm{rec}}[\lambda]
	=
	-\operatorname{div}\widetilde R[\lambda]
	$$
	in $\mathcal D'\bigl((0,T)\times\mathbb T^d\bigr)$, that is,
	$$
	\Lambda\bigl(v[\lambda],p_{\mathrm{rec}}[\lambda],\widetilde R[\lambda]\bigr)
	=
	\bigl(v[\lambda],\,v[\lambda]\otimes v[\lambda],\,\widetilde R[\lambda]\bigr)
	$$
	is a lifted Euler--Reynolds state.
\end{lemma}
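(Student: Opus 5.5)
Everything is pointwise linear algebra, plus one distributional rearrangement. First I verify the covariance identity \eqref{eq:mixture-covariance-defect}. Write $\bar v:=v[\lambda]$ and expand $\sum_i\lambda_i(v_i-\bar v)\otimes(v_i-\bar v)$; using $\sum_i\lambda_i=1$ and $\sum_i\lambda_i v_i=\bar v$ this equals $\sum_i\lambda_i v_i\otimes v_i-\bar v\otimes\bar v=Q[\lambda]-\bar v\otimes\bar v$. So, a.e., $C[\lambda]$ is a convex combination of the rank-one positive semidefinite matrices $a_i\otimes a_i$ with $a_i:=v_i-\bar v$. Hence $C[\lambda]\ge0$, and $\operatorname{tr}C[\lambda]=\sum_i\lambda_i|a_i|^2$. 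All products are of $L^2_x$ functions, so they lie in $L^\infty_tL^1_x$ and the identities hold a.e.

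For the chain of inequalities I argue as follows.

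\emph{First inequality.} $\operatorname{dev}$ is the orthogonal projection onto trace-free matrices in the Frobenius inner product, so $|\operatorname{dev}C|\le|C|$.

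\emph{Second inequality.} For $C\ge0$ with eigenvalues $\mu_k\ge0$, we have $|C|^2=\sum\mu_k^2\le(\sum\mu_k)^2=(\operatorname{tr}C)^2$.

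\emph{Third inequality.} I use the variance identity $\sum_i\lambda_i|v_i-\bar v|^2=\frac12\sum_{i,j}\lambda_i\lambda_j|v_i-v_j|^2$, obtained by expanding both sides. The right-hand side is at most $\frac12\max_{i,j}|v_i-v_j|^2\sum_{i,j}\lambda_i\lambda_j=\frac12\max_{i,j}|v_i-v_j|^2$. This identity is the one step where care is needed to get the constant $\tfrac12$; bounding each $|v_i-\bar v|^2$ crudely by $\max_{i,j}|v_i-v_j|^2$ would lose that factor.

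For the reconstructed equation, I start from the mixed lifted identity already derived: $\partial_t\bar v+\operatorname{div}Q[\lambda]+\nabla p[\lambda]=-\operatorname{div}R[\lambda]$. I substitute $Q[\lambda]=\bar v\otimes\bar v+C[\lambda]$ and split $C=\operatorname{dev}C+\frac1d(\operatorname{tr}C)I$. Since $\operatorname{div}(fI)=\nabla f$ distributionally for $f\in L^\infty_tL^1_x$, this gives
$$
\partial_t\bar v+\operatorname{div}(\bar v\otimes\bar v)+\nabla\Bigl(p[\lambda]+\tfrac1d\operatorname{tr}C[\lambda]\Bigr)=-\operatorname{div}\bigl(R[\lambda]+\operatorname{dev}C[\lambda]\bigr).
$$
This is exactly the claimed equation with $p_{\mathrm{rec}}[\lambda]$ and $\widetilde R[\lambda]$.

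It remains to check that $\widetilde R[\lambda]$ takes values in $\mathcal S^d_0$. $C[\lambda]$ is symmetric, so $\operatorname{dev}C[\lambda]$ is trace-free and symmetric, and each $R_i$ is $\mathcal S^d_0$-valued. Finally, $\operatorname{div}\bar v=0$ by linearity. Together these show that $(\bar v,\bar v\otimes\bar v,\widetilde R[\lambda])$ satisfies the lifted Euler--Reynolds system in $\mathbf E$.
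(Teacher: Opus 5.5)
Your proposal is correct and follows the paper's proof step for step. Both arguments use the variance decomposition, which exhibits $C[\lambda]$ as a nonnegative combination of rank-one positive semidefinite matrices, together with $\operatorname{dev}$ as the Frobenius-orthogonal projection onto $\mathcal S^d_0$ and the bound $|C|\le\operatorname{tr}C$ for $C\ge0$. Both also use the pairwise identity $\sum_i\lambda_i|v_i-v[\lambda]|^2=\frac12\sum_{i,j}\lambda_i\lambda_j|v_i-v_j|^2$ for the constant $\tfrac12$, and the distributional splitting of $C[\lambda]$ into deviatoric and trace parts via $\operatorname{div}\bigl(\tfrac1d(\operatorname{tr}C[\lambda])I\bigr)=\nabla\bigl(\tfrac1d\operatorname{tr}C[\lambda]\bigr)$, needing only $L^\infty_tL^1_x$ regularity. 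You also spell out what the paper leaves implicit: the eigenvalue argument for $|C|\le\operatorname{tr}C$, and the checks that $\widetilde R[\lambda]$ is symmetric and trace-free and that $\operatorname{div}v[\lambda]=0$.
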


\begin{proof}
	The identity in \eqref{eq:mixture-covariance-defect} is the standard
	variance decomposition, and it exhibits $C[\lambda]$ as a nonnegative
	combination of rank-one nonnegative matrices, whence
	$C[\lambda]\geq0$. For a nonnegative symmetric matrix the Frobenius norm is
	dominated by the trace, and $\operatorname{dev}$ is the orthogonal
	projection of $\operatorname{Sym}(d)$ onto $\mathcal S^d_0$ with respect to
	the Frobenius inner product, so it is norm-nonincreasing. The bound by the
	pairwise oscillation follows from
	$$
	\sum_{i=1}^N\lambda_i\bigl|v_i-v[\lambda]\bigr|^2
	=
	\frac12\sum_{i,j=1}^N\lambda_i\lambda_j\bigl|v_i-v_j\bigr|^2 .
	$$
	For the last assertion, subtract
	$\operatorname{div}\bigl(v[\lambda]\otimes v[\lambda]\bigr)$
	from
	$\operatorname{div}Q[\lambda]$
	and split $C[\lambda]$ into its trace-free part and its trace part, using
	$\operatorname{div}\bigl(\tfrac1d(\operatorname{tr}C[\lambda])I\bigr)
	=
	\nabla\bigl(\tfrac1d\operatorname{tr}C[\lambda]\bigr).$
	All manipulations are distributional pairings against
	$C_c^\infty\bigl((0,T)\times\mathbb T^d;\mathbb R^d\bigr)$
	and require no regularity beyond
	$C[\lambda]\in L^\infty\bigl(0,T;L^1\bigr)$,
	which holds since $v_i\in\mathcal V_w$.
\end{proof}

\begin{remark}
	\label{rem:two-mixture-points}
	Lemma~\ref{lem:mixture-covariance-sign} produces a point of $\mathbf E$
	which is not the mixture point $P(\lambda)$: the two differ in the
	flux component, since
	$Q[\lambda]=v[\lambda]\otimes v[\lambda]+C[\lambda]$
	with $C[\lambda]\neq0$ unless all phases coincide. The mixture point
	$P(\lambda)$ lies off the exact Euler locus and its membership in
	$\mathbf X(v_0,\mathcal A)$ is a genuine relaxation statement, whereas the
	absorbed point
	$\Lambda\bigl(v[\lambda],p_{\mathrm{rec}}[\lambda],\widetilde R[\lambda]\bigr)$
	lies in the image of the lifting map by construction. Note also that
	$C[\lambda]\geq0$ does not imply any sign condition on
	$\operatorname{dev}C[\lambda]$: a trace-free nonnegative matrix vanishes
	identically, so the deviatoric covariance defect is never sign-definite
	unless it is zero. Accordingly, $\operatorname{dev}C[\lambda]$ must be
	treated as an admissibility-constrained element of $\mathcal S^d_0$ in the
	sense of Section~\ref{sec:lifted-limit-space}, and not as a nonnegative
	Reynolds stress.
\end{remark}

The simplex $\Delta_N$ is equipped with the subspace diffeology inherited
from $\mathbb R^N$.

\begin{proposition} 
	\label{prop:mixture-smoothness}
	The lifted ambient mixture map
	$P:\Delta_N\longrightarrow\mathbf E$
	is smooth.  
\end{proposition}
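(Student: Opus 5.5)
The plan is to show that $P$ is the restriction to $\Delta_N$ of a continuous linear map $\widetilde P:\mathbb R^N\to\mathbf E$, and then to use the fact that plots of the subspace $\Delta_N$ are plots of $\mathbb R^N$. Concretely, I would define
$$
\widetilde P(\lambda):=\sum_{i=1}^N\lambda_i x_i=\Bigl(\sum_i\lambda_iv_i,\ \sum_i\lambda_iQ_i,\ \sum_i\lambda_iR_i\Bigr),\qquad \lambda\in\mathbb R^N .
$$
This is a linear map from a finite-dimensional space into the convenient vector space $\mathbf E$. It is therefore continuous and bounded, and hence smooth for the canonical diffeologies by the fact recorded before Proposition~\ref{prop:tangent-linear}.

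Alternatively, the smoothness can be checked directly from Definition~\ref{def:canonical-diffeology}. Let $\Pi:U\to\mathbb R^N$ be any smooth parametrization and $\ell:\mathbf E\to\mathbb R$ any continuous linear functional. Then
$$
\ell\circ\widetilde P\circ\Pi=\sum_i\Pi_i\,\ell(x_i),
$$
which is a finite linear combination of the smooth functions $\Pi_i$ with constant coefficients $\ell(x_i)$, so it is smooth. Hence $\widetilde P\circ\Pi$ is scalarwise smooth, i.e.\ a plot of $\mathbf E$. Since $\mathbf E$ carries the product diffeology, which coincides with its canonical diffeology by \cite[Proposition~3.11]{ADM2026}, one could equally verify componentwise that $\sum_i\Pi_iv_i$, $\sum_i\Pi_iQ_i$ and $\sum_i\Pi_iR_i$ are plots of $\mathcal V_w$, $\mathcal Q$ and $\mathcal R$ respectively, by the same argument.

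The last step is to pass to the simplex. A plot of $\Delta_N$ is, by the definition of the subspace diffeology, a smooth map $\Pi:U\to\mathbb R^N$ with values in $\Delta_N$. Then $P\circ\Pi=\widetilde P\circ\Pi$ is a plot of $\mathbf E$ by the previous step, so $P$ is smooth in the sense of Definition~\ref{def:smooth-maps}.

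There is no genuine obstacle here. The only point needing care is that $\Delta_N$ is not open, so smoothness must be understood through the subspace diffeology rather than through ordinary calculus on $\Delta_N$. This is exactly why I route the argument through the linear extension $\widetilde P$ on all of $\mathbb R^N$.
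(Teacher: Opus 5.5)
Your proof is correct and follows the paper's argument. The paper simply observes that each component of $P$ is a finite affine combination of fixed elements of the corresponding factor of $\mathbf E$, hence smooth; your linear extension $\widetilde P$ to $\mathbb R^N$ and the explicit passage through plots of the subspace diffeology on $\Delta_N$ spell out the details the paper leaves implicit.
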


\begin{proof}
	Each component of $P$ is a finite affine combination of fixed elements of
	the corresponding locally convex factor of $\mathbf E$. Hence $P$ is
	smooth as a map into $\mathbf E$. 
\end{proof}

For every $\lambda\in\Delta_N$, the tangent space to the affine hull of the
simplex is
$$
T_\lambda\Delta_N
=
\left\{
\delta\lambda=(\delta\lambda_1,\ldots,\delta\lambda_N)\in\mathbb R^N
\;\middle|\;
\sum_{i=1}^N\delta\lambda_i=0
\right\}.
$$
At points of $\Delta_N^\circ$, this is the ordinary tangent space of the
manifold-with-corners simplex. The differential of $P$ is
$$
dP_\lambda(\delta\lambda)
=
(\delta v,\delta Q,\delta R),
$$
where
\begin{equation}
	\label{eq:mixture-differential}
	\delta v=\sum_{i=1}^N\delta\lambda_i v_i,
	\qquad
	\delta Q=\sum_{i=1}^N\delta\lambda_i Q_i,
	\qquad
	\delta R=\sum_{i=1}^N\delta\lambda_i R_i.
\end{equation}

\begin{lemma}
	\label{lem:first-variation-dev-C}
	Let $Q_i=v_i\otimes v_i$ for all $i$, let $\lambda^0\in\Delta_N^\circ$, and
	let $\delta\lambda\in T_\lambda\Delta_N$ satisfy
	$\sum_{i=1}^N\delta\lambda_i v_i=0.$
	Set $\lambda(s):=\lambda^0+s\,\delta\lambda.$ Then
	$$
	\frac{d}{ds}\bigg|_{s=0}\operatorname{dev}C[\lambda(s)]
	=
	\operatorname{dev}\left(
	\sum_{i=1}^N\delta\lambda_i\,v_i\otimes v_i
	\right),
	$$
	and correspondingly
	$$
	\frac{d}{ds}\bigg|_{s=0}\widetilde R[\lambda(s)]
	=
	\sum_{i=1}^N\delta\lambda_i R_i
	+
	\operatorname{dev}\left(
	\sum_{i=1}^N\delta\lambda_i\,v_i\otimes v_i
	\right).
	$$
\end{lemma}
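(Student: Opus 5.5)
We compute the derivative of $C[\lambda(s)]$ directly from its definition \eqref{eq:mixture-covariance-defect}, and then pass it through the deviatoric projection and the definition \eqref{eq:absorbed-mixture} of $\widetilde R$.

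\emph{Polynomial structure.} Along $\lambda(s)=\lambda^0+s\,\delta\lambda$, every quantity entering $C[\lambda(s)]$ is polynomial in $s$, with coefficients in $\mathcal V_w$ or in $L^\infty(0,T;L^1)$:
$$
v[\lambda(s)]=v[\lambda^0]+s\,\delta v,
\qquad
Q[\lambda(s)]=Q[\lambda^0]+s\sum_{i=1}^N\delta\lambda_i\,v_i\otimes v_i,
$$
where $\delta v:=\sum_i\delta\lambda_i v_i$. Hence
$$
C[\lambda(s)]
=Q[\lambda(s)]-\bigl(v[\lambda^0]+s\,\delta v\bigr)\otimes\bigl(v[\lambda^0]+s\,\delta v\bigr)
$$
is a polynomial curve of degree at most two. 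Its derivative at $s=0$ is therefore just the coefficient of $s$, exactly as in the proof of Proposition~\ref{prop:tangent-linear}; alternatively one can combine Lemma~\ref{lem:velocity} with the smoothness of $qu$ from Example~\ref{exa:quad}. The linear coefficient is
$$
\sum_{i=1}^N\delta\lambda_i\,v_i\otimes v_i-\bigl(v[\lambda^0]\otimes\delta v+\delta v\otimes v[\lambda^0]\bigr).
$$

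\emph{Using the hypothesis.} The assumption $\delta v=\sum_i\delta\lambda_i v_i=0$ kills the cross terms, so
$$
\frac{d}{ds}\Big|_{s=0}C[\lambda(s)]=\sum_{i=1}^N\delta\lambda_i\,v_i\otimes v_i .
$$
In fact the quadratic term $s^2\,\delta v\otimes\delta v$ also vanishes, so the curve $C[\lambda(s)]$ is affine in $s$.

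\emph{Deviatoric part and $\widetilde R$.} The map $\operatorname{dev}$ acts pointwise and is a continuous linear map on $L^\infty(0,T;L^1(\mathbb T^d;\mathbb R^{d\times d}))$, hence also on its image in $\mathcal D'$. It therefore commutes with $d/ds$, by the case $m=1$ of Proposition~\ref{prop:tangent-linear}, which gives the first formula. Next, $R[\lambda(s)]=R[\lambda^0]+s\sum_i\delta\lambda_i R_i$ is affine, so its derivative is $\sum_i\delta\lambda_i R_i$. By linearity of differentiation, $\widetilde R=R+\operatorname{dev}C$ then yields the second formula.

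\emph{Where care is needed.} The only delicate point is the choice of space in which derivatives are taken. The tensors $v_i\otimes v_i$ lie in $L^\infty_tL^1_x$, while $R_i$ lies in $\mathcal R$. I would therefore carry out the whole computation inside the convenient space $\mathcal D'\bigl((0,T)\times\mathbb T^d;\mathbb R^{d\times d}\bigr)$, using the continuous embedding $I$ from the proof of Proposition~\ref{prop:tangent-euler-locus}. Once there, the polynomial dependence on $s$ makes every derivative elementary.
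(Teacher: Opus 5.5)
Your proof is correct and follows the paper's own argument. You write $C[\lambda(s)]=\sum_i\lambda_i(s)\,v_i\otimes v_i-v[\lambda(s)]\otimes v[\lambda(s)]$, note that $\sum_i\delta\lambda_i v_i=0$ kills the variation of the quadratic term, and then apply the linear map $\operatorname{dev}$ and the definition \eqref{eq:absorbed-mixture} of $\widetilde R$. One small simplification: $\operatorname{dev}$ is already continuous linear componentwise on $\mathcal D'$, so you need not route through $L^\infty_tL^1_x$.
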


\begin{proof}
	By \eqref{eq:mixture-covariance-defect},
	$$
	C[\lambda(s)]
	=
	\sum_{i=1}^N\lambda_i(s)\,v_i\otimes v_i
	-
	v[\lambda(s)]\otimes v[\lambda(s)].
	$$
	Differentiating at $s=0$ and using
	$\tfrac{d}{ds}v[\lambda(s)]=\sum_i\delta\lambda_i v_i=0$
	kills the second term, leaving
	$\sum_i\delta\lambda_i\,v_i\otimes v_i.$
	Applying the linear map $\operatorname{dev}$ and
	\eqref{eq:absorbed-mixture} gives both identities.
\end{proof}

Formula~\eqref{eq:mixture-differential} gives a finite-dimensional analogue of
the observable kernels from Subsection~\ref{subsec:observable-kernels}. Define
$$
K_{\mathrm{vel}}(\lambda)
:=
\left\{
\delta\lambda\in T_\lambda\Delta_N
\;\middle|\;
\sum_{i=1}^N\delta\lambda_i v_i=0
\right\},
$$
$$
K_{\mathrm{flux}}(\lambda)
:=
\left\{
\delta\lambda\in K_{\mathrm{vel}}(\lambda)
\;\middle|\;
\sum_{i=1}^N\delta\lambda_i Q_i=0
\right\},
$$
and
$$
K_{\mathrm{full}}(\lambda)
:=
\left\{
\delta\lambda\in K_{\mathrm{flux}}(\lambda)
\;\middle|\;
\sum_{i=1}^N\delta\lambda_i R_i=0
\right\}.
$$

\begin{proposition}
	\label{prop:mixture-kernels}
	Let
	$\lambda\in\Delta_N^\circ$, let
	$\delta\lambda\in T_\lambda\Delta_N$, and set
	$\delta X
	:=
	dP_\lambda(\delta\lambda)
	=
	(\delta v,\delta Q,\delta R).$
	Then:
	\begin{enumerate}
		\item if
		$\delta\lambda\in K_{\mathrm{vel}}(\lambda)$, then
		$\delta X$ is annihilated by all ambient linear velocity probes;
		
		\item if
		$\delta\lambda\in K_{\mathrm{flux}}(\lambda)$, then
		$\delta X$ is annihilated by all ambient linear velocity and flux
		probes;
		
		\item if
		$\delta\lambda\in K_{\mathrm{full}}(\lambda)$, then
		$\delta X=0$
		in the lifted ambient space $\mathbf E$.
	\end{enumerate}
	Moreover, if
	$\delta\lambda\in K_{\mathrm{flux}}(\lambda)$
	and
	$\sum_{i=1}^N\delta\lambda_iR_i\neq0,$
	then
	$$
	dP_\lambda(\delta\lambda)
	=
	\left(
	0,0,\sum_{i=1}^N\delta\lambda_iR_i
	\right)
	$$
	is a nonzero ambient stress-gauge variation in the sense of
	Theorem~\ref{thm:stress-gauge-characterization}. It represents a genuine stress-gauge tangent
	direction in $\mathbf X(v_0,\mathcal A)$ only under the additional
	realizability condition that the corresponding mixture path is contained in
	$\mathbf X(v_0,\mathcal A)$.
\end{proposition}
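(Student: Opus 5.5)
The plan is to read everything off the explicit differential formula \eqref{eq:mixture-differential}. Since $P$ is the restriction to $\Delta_N$ of an affine map $\mathbb R^N\to\mathbf E$, it is smooth by Proposition~\ref{prop:mixture-smoothness}. Its differential at an interior point $\lambda\in\Delta_N^\circ$ is then computed in two steps. First, apply Proposition~\ref{prop:tangent-linear} to the linear part $\delta\lambda\mapsto\sum_i\delta\lambda_i x_i$. Second, use the canonical identification $\Phi^{\mathbf E}_{P(\lambda)}$ of Theorem~\ref{thm:convenient-tangent}. Concretely, the curve $s\mapsto P(\lambda+s\delta\lambda)$ stays in $\Delta_N$ for small $|s|$ because $\lambda$ is interior and $\sum_i\delta\lambda_i=0$. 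This curve is affine in $s$, so Lemma~\ref{lem:velocity} identifies its internal velocity with $(\delta v,\delta Q,\delta R)$. Thus $\delta X$ is an honest element of $\mathbf E$, given componentwise by \eqref{eq:mixture-differential}.

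Items (1)--(3) then follow by pairing with the probe families of Subsection~\ref{subsec:observable-kernels}. Each probe is the restriction of a continuous linear functional $\ell$ on $\mathbf E$, so its differential along $\delta X$ is $\ell(\delta v,\delta Q,\delta R)$, exactly as in the proof of Theorem~\ref{thm:kernel-characterization}. For a velocity probe with test field $\psi$, this equals $\int\!\!\int \delta v\cdot\psi$. If $\delta\lambda\in K_{\mathrm{vel}}(\lambda)$ then $\delta v=0$ and the pairing vanishes, which gives (1). For (2), the condition $\delta\lambda\in K_{\mathrm{flux}}(\lambda)$ adds $\delta Q=0$, so every flux probe $\langle\delta Q,\Psi\rangle$ also vanishes. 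For (3), $\delta\lambda\in K_{\mathrm{full}}(\lambda)$ additionally gives $\delta R=0$, so all three components of $\delta X$ vanish and $\delta X=0$ in $\mathbf E$.

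For the final assertion, take $\delta\lambda\in K_{\mathrm{flux}}(\lambda)$. Then \eqref{eq:mixture-differential} gives $dP_\lambda(\delta\lambda)=(0,0,\sum_i\delta\lambda_iR_i)$, which is nonzero by hypothesis. This has exactly the shape required in Definition~\ref{def:stress-gauge-direction}.

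The necessary condition \eqref{eq:stress-gauge-gradient-div} can be checked directly at the ambient level. Each phase satisfies $\partial_tv_i+\operatorname{div}Q_i+\nabla p_i=-\operatorname{div}R_i$. Taking the combination with coefficients $\delta\lambda_i$ and using $\delta v=0$ and $\delta Q=0$ gives $\operatorname{div}\bigl(\sum_i\delta\lambda_iR_i\bigr)=\nabla\pi$ with $\pi:=-\sum_i\delta\lambda_ip_i$. This is precisely the conclusion of Theorem~\ref{thm:stress-gauge-characterization}, reached without using tangency to $\mathbf X(v_0,\mathcal A)$.

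The only delicate point is the last sentence of the statement: turning this ambient variation into an internal tangent vector of $\mathbf X(v_0,\mathcal A)$. The plan here is conditional. Assume the segment $s\mapsto P(\lambda+s\delta\lambda)$, $|s|<\varepsilon$, lies in $\mathbf X(v_0,\mathcal A)$. It is then a plot of the subspace diffeology, so it defines $\xi:=\frac{d}{ds}\big|_0 P(\lambda+s\delta\lambda)\in T_{P(\lambda)}\mathbf X(v_0,\mathcal A)$. By the chain rule and Lemma~\ref{lem:velocity}, $\operatorname{Amb}_{P(\lambda)}(\xi)=(0,0,\sum_i\delta\lambda_iR_i)$, and Theorem~\ref{thm:stress-gauge-characterization} applies. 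Without this realizability hypothesis, Remark~\ref{rem:mixture-ambient-only} shows nothing more can be claimed. The proof should therefore only record the conditional statement and not attempt to prove containment in $\mathbf X(v_0,\mathcal A)$.
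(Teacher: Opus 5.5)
Your proposal is correct and follows the paper's proof. Everything is read off the differential formula~\eqref{eq:mixture-differential}: items (1)--(3) come from pairing $(\delta v,\delta Q,\delta R)$ with the linear probes, and the final assertion comes from the shape $(0,0,\sum_i\delta\lambda_iR_i)$. Your additions are also correct and make explicit what the paper only asserts: deriving the differential via Lemma~\ref{lem:velocity}, verifying $\operatorname{div}\bigl(\sum_i\delta\lambda_iR_i\bigr)=\nabla\bigl(-\sum_i\delta\lambda_ip_i\bigr)$ directly from the phase equations, and proving the sufficiency direction of the realizability caveat through the subspace-diffeology plot.
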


\begin{proof}
	By the differential formula \eqref{eq:mixture-differential},
	$$
	\delta v
	=
	\sum_{i=1}^N\delta\lambda_i v_i,
	\qquad
	\delta Q
	=
	\sum_{i=1}^N\delta\lambda_i Q_i,
	\qquad
	\delta R
	=
	\sum_{i=1}^N\delta\lambda_i R_i.
	$$
	Hence
	$$
	\delta v=0
	\quad\Longleftrightarrow\quad
	\delta\lambda\in K_{\mathrm{vel}}(\lambda),
	$$
	and
	$$
	\delta v=\delta Q=0
	\quad\Longleftrightarrow\quad
	\delta\lambda\in K_{\mathrm{flux}}(\lambda).
	$$
	The first two assertions therefore follow from the fact that the velocity
	and flux probes are precisely the linear probes of the corresponding lifted
	ambient components.
	
	If
	$\delta\lambda\in K_{\mathrm{full}}(\lambda)$, then, by definition,
	$\delta v=\delta Q=\delta R=0.$
	Thus
	$dP_\lambda(\delta\lambda)=0$
	in $\mathbf E$.
	Finally, if
	$\delta\lambda\in K_{\mathrm{flux}}(\lambda)$ and
	$\sum_{i=1}^N\delta\lambda_iR_i\neq0$, then
	$$
	\delta v=0,
	\qquad
	\delta Q=0,
	\qquad
	\delta R=\sum_{i=1}^N\delta\lambda_iR_i\neq0.
	$$
	Consequently
	$dP_\lambda(\delta\lambda)=(0,0,\delta R)$
	is a nonzero variation purely in the lifted stress component. This is an
	ambient stress-gauge variation. It is a stress-gauge tangent direction in
	the lifted limit space only if the variation is realized by a plot in
	$\mathbf X(v_0,\mathcal A)$.
\end{proof}

\begin{remark}
	At the finite-mixture level,
	$K_{\mathrm{full}}(\lambda)=\ker(dP_\lambda)$ is the kernel of the explicit
	affine map $P:\Delta_N\to\mathbf E$. This must not be identified with the
	abstract identity $\ker_z(\mathcal F_{\mathrm{full}})=\ker(d\jmath_z)$ of
	Theorem~\ref{thm:kernel-characterization}: the two kernels belong to two
	different maps, $P$ and $\jmath$. A degeneracy recorded by
	$K_{\mathrm{full}}(\lambda)$ is a redundancy of the finite-mixture
	parametrization, and carries no information about $\ker(d\jmath_z)$, which
	Section~\ref{sec:lifted-limit-space} leaves undetermined.
\end{remark}

\begin{remark}
	A variation may lie in
	$K_{\mathrm{vel}}(\lambda)\setminus K_{\mathrm{flux}}(\lambda),$
	and hence be invisible to velocity observables while remaining detectable by
	quadratic flux observables. Thus the distinction between velocity
	observability and lifted flux observability is essential.
\end{remark}

The ambient stress-gauge variations produced by
Proposition~\ref{prop:mixture-kernels} should be contrasted with their smooth
prototype in Corollary~\ref{cor:smooth-stress-gauge}, where such directions
are abundant and global because $S$ ranges over an unconstrained
linear space. Here the admissible variations are confined to the tangent
space $T_\lambda\Delta_N$ of the simplex, and it is this confinement,
together with the realizability requirement of
Remark~\ref{rem:mixture-ambient-only}, that makes globality a nontrivial
question.

\subsection{Pointwise phase-counting}
\label{subsec:phase-counting}

The counts in this subsection are pointwise in
$(t,x)\in(0,T)\times\mathbb T^d$. They concern algebraic variations of the
weights at a fixed spacetime point. They should not be confused with tangent
vectors to the global finite-dimensional map $P:\Delta_N\to\mathbf E$, where
$\delta\lambda$ is a single coefficient vector independent of $(t,x)$.

Fix
$(t,x)\in(0,T)\times\mathbb T^d$
and write
$a_i:=v_i(t,x)\in\mathbb R^d.$
The pointwise space of weight variations preserving total mass and barycentric
velocity is
$$
K^{(1)}_{t,x}
=
\left\{
\delta\lambda\in\mathbb R^N
\;\middle|\;
\sum_{i=1}^N\delta\lambda_i=0,
\quad
\sum_{i=1}^N\delta\lambda_i a_i=0
\right\}.
$$

\begin{proposition} 
	\label{prop:velocity-invisible-phase-count}
	One has
	$$
	\dim K^{(1)}_{t,x}
	\geq
	\max\{0,N-(d+1)\}.
	$$
	In particular, if
	$$
	N>d+1,
	$$
	then $K^{(1)}_{t,x}$ is nontrivial. For configurations for which the
	corresponding zeroth--first moment map has maximal rank, and in particular
	for generic configurations, equality holds.
\end{proposition}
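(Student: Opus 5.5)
The plan is to realize $K^{(1)}_{t,x}$ as the kernel of a linear map and apply rank--nullity. Define the zeroth--first moment map
$$
M_{t,x}\colon\mathbb R^N\longrightarrow\mathbb R\times\mathbb R^d,
\qquad
M_{t,x}(\delta\lambda):=\Bigl(\sum_{i=1}^N\delta\lambda_i,\ \sum_{i=1}^N\delta\lambda_i a_i\Bigr),
$$
with $a_i=v_i(t,x)$. By definition $K^{(1)}_{t,x}=\ker M_{t,x}$. Since the target has dimension $d+1$, we have $\operatorname{rank}M_{t,x}\le\min\{N,d+1\}$. Rank--nullity then gives
$$
\dim K^{(1)}_{t,x}=N-\operatorname{rank}M_{t,x}\ge N-\min\{N,d+1\}=\max\{0,N-(d+1)\}.
$$
This proves the inequality. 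Nontriviality for $N>d+1$ is immediate, because the lower bound is then positive.

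For the equality case, maximal rank means $\operatorname{rank}M_{t,x}=\min\{N,d+1\}$, and rank--nullity then gives equality directly. It remains to justify "in particular for generic configurations". The matrix of $M_{t,x}$ is the $(d+1)\times N$ matrix whose $i$-th column is $(1,a_i)$. Maximal rank is equivalent to the nonvanishing of at least one maximal minor. Each such minor is a polynomial in $(a_1,\dots,a_N)\in(\mathbb R^d)^N$, so the rank-deficient configurations form a real algebraic subset.

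To see that this subset is proper, I will exhibit one configuration of maximal rank. For $N\ge d+1$, take $a_1=0$ and $a_{i+1}=e_i$ for $1\le i\le d$, an affinely independent set. For $N<d+1$, take $a_1=0$ and $a_{i+1}=e_i$ for $i<N$. In both cases the columns $(1,a_i)$ are linearly independent, which is the same as affine independence of the $a_i$.

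A polynomial that is not identically zero vanishes only on a closed nowhere-dense, Lebesgue-null set. So maximal rank holds on an open dense full-measure set of configurations. In geometric terms, the genericity condition is exactly that the points $a_1,\dots,a_N$ affinely span $\mathbb R^d$ when $N\ge d+1$, or are affinely independent when $N\le d+1$.

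No step is a genuine obstacle. The only point needing care is making "generic" precise as the complement of a proper algebraic set, and that is settled by the explicit example above.
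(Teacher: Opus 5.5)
Your proof is correct and follows the paper's argument: you identify $K^{(1)}_{t,x}$ as the kernel of the zeroth--first moment map into $\mathbb R^{d+1}$, apply rank--nullity, and get equality when the rank is maximal. Your justification of genericity, via an explicit maximal-rank configuration and the zero set of a nonzero minor, is an addition the paper does not supply; one small correction there is that for $N>d+1$ only the first $d+1$ columns $(1,a_i)$ are linearly independent, not all of them, and that is enough for maximal rank.
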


\begin{proof}
	The constraints defining $K^{(1)}_{t,x}$ are the kernel of
	$$
	L^{(1)}_{t,x}:\mathbb R^N\longrightarrow\mathbb R^{d+1},
	\qquad
	L^{(1)}_{t,x}(\delta\lambda)
	=
	\left(
	\sum_{i=1}^N\delta\lambda_i,
	\sum_{i=1}^N\delta\lambda_i a_i
	\right).
	$$
	Therefore
	$$
	\dim K^{(1)}_{t,x}
	=
	N-\operatorname{rank}L^{(1)}_{t,x}
	\geq
	N-(d+1).
	$$
	Since the dimension is nonnegative, the stated bound follows. If
	$L^{(1)}_{t,x}$ has maximal rank
	$\min\{N,d+1\},$
	then equality holds.
\end{proof}

For exact phases,
$Q_i=v_i\otimes v_i,$
preserving the flux to first order amounts pointwise to preserving the second
moment. Define
$$
K^{(2)}_{t,x}
=
\left\{
\delta\lambda\in\mathbb R^N
\;\middle|\;
\sum_{i=1}^N\delta\lambda_i=0,\quad
\sum_{i=1}^N\delta\lambda_i a_i=0,\quad
\sum_{i=1}^N\delta\lambda_i a_i\otimes a_i=0
\right\}.
$$

\begin{proposition} 
	\label{prop:velocity-flux-invisible-phase-count}
	One has
	$$
	\dim K^{(2)}_{t,x}
	\geq
	\max\left\{
	0,\,
	N-\frac{(d+1)(d+2)}2
	\right\}.
	$$
	For configurations for which the corresponding zeroth--first--second moment
	map has maximal rank, and in particular for generic configurations, equality
	holds.
\end{proposition}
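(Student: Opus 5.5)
The plan is to mirror the proof of Proposition~\ref{prop:velocity-invisible-phase-count}. First I will realize $K^{(2)}_{t,x}$ as the kernel of a single linear map. Let
$$
L^{(2)}_{t,x}:\mathbb R^N\longrightarrow\mathbb R\times\mathbb R^d\times\Sym(d),
\qquad
L^{(2)}_{t,x}(\delta\lambda)
=
\Bigl(\sum_{i=1}^N\delta\lambda_i,\ \sum_{i=1}^N\delta\lambda_i a_i,\ \sum_{i=1}^N\delta\lambda_i\, a_i\otimes a_i\Bigr).
$$
The key observation is that the third component takes values in symmetric matrices, because each $a_i\otimes a_i$ is symmetric. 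So the codomain may be taken to be $\mathbb R\times\mathbb R^d\times\Sym(d)$, whose dimension is
$$
1+d+\tfrac{d(d+1)}2=\tfrac{(d+1)(d+2)}2.
$$
This is the only place where one must be careful: regarding the target as $\mathbb R^{d\times d}$ would give a weaker bound.

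By rank--nullity,
$$
\dim K^{(2)}_{t,x}=N-\operatorname{rank}L^{(2)}_{t,x}\ge N-\tfrac{(d+1)(d+2)}2,
$$
and nonnegativity of dimension gives the stated maximum. If $L^{(2)}_{t,x}$ has maximal rank $\min\{N,\tfrac{(d+1)(d+2)}2\}$, then equality holds by the same count.

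For the genericity claim, I will identify $L^{(2)}_{t,x}$ with the $\tfrac{(d+1)(d+2)}2\times N$ matrix whose $i$-th column is the vector of monomials of degree at most two evaluated at $a_i$. Its maximal minors are polynomials in $(a_1,\dots,a_N)\in(\mathbb R^d)^N$. It then suffices to exhibit one configuration of maximal rank, since the nonvanishing of a nonzero polynomial is an open dense condition.

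Exhibiting such a configuration is the main obstacle, and I would try the following. Choose $M:=\min\{N,\tfrac{(d+1)(d+2)}2\}$ points that are unisolvent for quadratic polynomials, for instance the points $0$, $e_j$, $2e_j$, $e_j+e_k$ ($j<k$) or any subset of them. These points are unisolvent because no nonzero polynomial of degree at most two vanishes on all of them: restricting to each coordinate axis kills the pure terms, and then the mixed terms are killed at $e_j+e_k$. It follows that the evaluation columns at these points are linearly independent. Any remaining points can be chosen arbitrarily, which gives rank $M$.
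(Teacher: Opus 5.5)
Your proposal is correct and follows the paper's proof: you apply rank--nullity to the zeroth--first--second moment map into $\mathbb R\oplus\mathbb R^d\oplus\Sym(d)$, whose dimension is $\frac{(d+1)(d+2)}2$, and you get equality whenever the rank is maximal. Your additional argument with the unisolvent points $0$, $e_j$, $2e_j$, $e_j+e_k$ and the polynomial maximal minors actually proves the genericity clause, which the paper only asserts.
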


\begin{proof}
	The defining constraints are the kernel of the moment map
	$$
	L^{(2)}_{t,x}:\mathbb R^N
	\longrightarrow
	\mathbb R\oplus\mathbb R^d\oplus\operatorname{Sym}(d),
	$$
	$$
	L^{(2)}_{t,x}(\delta\lambda)
	=
	\left(
	\sum_{i=1}^N\delta\lambda_i,\,
	\sum_{i=1}^N\delta\lambda_i a_i,\,
	\sum_{i=1}^N\delta\lambda_i a_i\otimes a_i
	\right).
	$$
	The target has dimension
	$$
	1+d+\frac{d(d+1)}2
	=
	\frac{(d+1)(d+2)}2.
	$$
	Rank--nullity gives
	$$
	\dim K^{(2)}_{t,x}
	=
	N-\operatorname{rank}L^{(2)}_{t,x}
	\geq
	N-\frac{(d+1)(d+2)}2.
	$$
	Together with nonnegativity of the dimension, this gives the asserted lower
	bound. If $L^{(2)}_{t,x}$ has maximal rank
	$$
	\min\left\{
	N,\frac{(d+1)(d+2)}2
	\right\},
	$$
	then equality holds.
\end{proof}

\begin{remark}
	\label{rem:sym-versus-deviatoric}
	The third component of $L^{(2)}_{t,x}$ takes values in
	$\operatorname{Sym}(d)$, not in $\mathcal S^d_0$, since
	$a_i\otimes a_i$ has trace $|a_i|^2$. This is consistent with the
	trace-free stress convention of Section~\ref{sec:lifted-limit-space}: by
	\eqref{eq:absorbed-mixture} the trace part of the second moment is absorbed
	into the reconstructed pressure $p_{\mathrm{rec}}$, and only the
	deviatoric part enters the stress variable. The counting above is therefore
	performed before this splitting; the corresponding trace-free statement is
	Theorem~\ref{thm:deviatoric-second-moment-realization}.
\end{remark}

The next result is purely pointwise and algebraic. It shows that, once
sufficiently many phases are present at a fixed point
$(t,x)\in(0,T)\times\mathbb T^d,$
one can keep the barycentric velocity fixed while prescribing a first-order
trace-free quadratic flux variation. It does not, by itself, assert the
existence of a global spacetime mixture direction with constant coefficients.

\begin{theorem} 
	\label{thm:deviatoric-second-moment-realization}
	Assume that
	$$
	N\geq\frac{(d+1)(d+2)}2
	$$
	and that
	$a_1,\ldots,a_N\in\mathbb R^d$
	are in generic position in the sense that the moment map
	$$
	L^{(2)}:\mathbb R^N
	\longrightarrow
	\mathbb R\oplus\mathbb R^d\oplus\operatorname{Sym}(d),
	$$
	$$
	L^{(2)}(\delta\lambda)
	=
	\left(
	\sum_{i=1}^N\delta\lambda_i,\,
	\sum_{i=1}^N\delta\lambda_i a_i,\,
	\sum_{i=1}^N\delta\lambda_i a_i\otimes a_i
	\right),
	$$
	has full rank, that is,
	$$
	\operatorname{rank}L^{(2)}
	=
	\min\left\{N,\frac{(d+1)(d+2)}2\right\}
	=
	\frac{(d+1)(d+2)}2 .
	$$
	Then $L^{(2)}$ is surjective, and for every
	$T_0\in\mathcal S^d_0$
	there exists
	$\delta\lambda\in\mathbb R^N$
	such that
	$$
	\sum_{i=1}^N\delta\lambda_i=0,
	\qquad
	\sum_{i=1}^N\delta\lambda_i a_i=0,
	\qquad
	\operatorname{dev}\left(
	\sum_{i=1}^N\delta\lambda_i a_i\otimes a_i
	\right)
	=
	T_0.
	$$
\end{theorem}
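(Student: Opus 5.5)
The argument is pure linear algebra: first get surjectivity of $L^{(2)}$ from the rank hypothesis, then choose a preimage of a suitable target in the codomain.

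\emph{Surjectivity.} The codomain $\mathbb R\oplus\mathbb R^d\oplus\operatorname{Sym}(d)$ has dimension $1+d+\tfrac{d(d+1)}2=\tfrac{(d+1)(d+2)}2$, as computed in the proof of Proposition~\ref{prop:velocity-flux-invisible-phase-count}. The hypothesis $N\geq\tfrac{(d+1)(d+2)}2$ makes the minimum in the rank condition equal to $\tfrac{(d+1)(d+2)}2$. The rank assumption therefore says that the image of $L^{(2)}$ is a subspace of full dimension in a finite-dimensional target, hence the whole target, so $L^{(2)}$ is onto.

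\emph{Realization.} Given $T_0\in\mathcal S^d_0\subseteq\operatorname{Sym}(d)$, I will use surjectivity to pick $\delta\lambda\in\mathbb R^N$ with
$$
L^{(2)}(\delta\lambda)=(0,0,T_0).
$$
The first two components give $\sum_i\delta\lambda_i=0$ and $\sum_i\delta\lambda_i a_i=0$. The third gives $\sum_i\delta\lambda_i a_i\otimes a_i=T_0$. Since $\operatorname{tr}T_0=0$, we have $\operatorname{dev}T_0=T_0$, and therefore $\operatorname{dev}\bigl(\sum_i\delta\lambda_i a_i\otimes a_i\bigr)=T_0$.

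I would add a one-line remark that the deviatoric statement is weaker than full surjectivity: any target $(0,0,T_0+c\,\mathrm{Id})$ would also work, and this is consistent with Remark~\ref{rem:sym-versus-deviatoric}.

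The only step needing care is the dimension count that identifies the rank hypothesis with surjectivity. It is already in the paper, so I expect no real obstacle.
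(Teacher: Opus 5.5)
Your proposal is correct and is essentially the paper's proof: the rank hypothesis equals the target dimension $\tfrac{(d+1)(d+2)}2$, so $L^{(2)}$ is surjective, and a preimage of $(0,0,T_0)$ works because $\operatorname{dev}T_0=T_0$. The paper phrases the last step through any $M_0\in\operatorname{Sym}(d)$ with $\operatorname{dev}M_0=T_0$, taking $M_0=T_0$, which is exactly your closing remark about targets of the form $(0,0,T_0+c\,\mathrm{Id})$.
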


\begin{proof}
	Since
	$$
	\dim\left(
	\mathbb R\oplus\mathbb R^d\oplus\operatorname{Sym}(d)
	\right)
	=
	\frac{(d+1)(d+2)}2
	\leq N,
	$$
	the full-rank hypothesis states exactly that the rank of $L^{(2)}$ equals
	the dimension of its target, that is, $L^{(2)}$ is surjective.
	Choose a symmetric matrix $M_0\in\operatorname{Sym}(d)$ with
	$\operatorname{dev}M_0=T_0$; for example, one may take
	$M_0=T_0.$
	By surjectivity there exists
	$\delta\lambda\in\mathbb R^N$
	with
	$L^{(2)}(\delta\lambda)=(0,0,M_0).$
	The first two components give the required zeroth- and first-moment
	constraints, while the third gives
	$$
	\operatorname{dev}\left(
	\sum_{i=1}^N\delta\lambda_i a_i\otimes a_i
	\right)
	=
	\operatorname{dev}M_0
	=
	T_0.
	$$
\end{proof}

\begin{corollary}
	\label{cor:prescribed-deviatoric-stress-variation}
	Let $Q_i=v_i\otimes v_i$, let $\lambda^0\in\Delta_N^\circ$, and fix
	$(t,x)\in(0,T)\times\mathbb T^d$ such that the hypotheses of
	Theorem~\ref{thm:deviatoric-second-moment-realization} hold for
	$a_i=v_i(t,x)$. Then for every $T_0\in\mathcal S^d_0$ there is
	$\delta\lambda\in T_{\lambda^0}\Delta_N$ with
	$\sum_i\delta\lambda_i v_i(t,x)=0$ and
	$$
	\frac{d}{ds}\bigg|_{s=0}
	\operatorname{dev}C[\lambda^0+s\,\delta\lambda](t,x)
	=
	T_0 .
	$$
	Equivalently, by Lemma~\ref{lem:first-variation-dev-C}, the absorbed stress
	$\widetilde R[\lambda]$ of \eqref{eq:absorbed-mixture} admits at $(t,x)$ the
	first variation
	$\sum_i\delta\lambda_i R_i(t,x)+T_0$
	at frozen barycentric velocity. As emphasized in
	Remark~\ref{rem:pointwise-versus-global} below, this is a pointwise
	statement: the vector $\delta\lambda$ produced here depends on $(t,x)$.
\end{corollary}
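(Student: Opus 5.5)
The plan is to reduce the corollary to Theorem~\ref{thm:deviatoric-second-moment-realization} applied at the single point $(t,x)$, and then transfer the algebraic conclusion to the covariance defect using the first-variation formula of Lemma~\ref{lem:first-variation-dev-C}.

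First, set $a_i:=v_i(t,x)$. By hypothesis the moment map $L^{(2)}$ built from these vectors is surjective. Given $T_0\in\mathcal S^d_0$, Theorem~\ref{thm:deviatoric-second-moment-realization} produces $\delta\lambda\in\mathbb R^N$ satisfying three conditions:
\[
\sum_i\delta\lambda_i=0,\qquad \sum_i\delta\lambda_i a_i=0,\qquad \operatorname{dev}\Bigl(\sum_i\delta\lambda_i a_i\otimes a_i\Bigr)=T_0 .
\]
The first condition says exactly that $\delta\lambda\in T_{\lambda^0}\Delta_N$. Since $\lambda^0$ lies in the relative interior $\Delta_N^\circ$, the segment $\lambda^0+s\,\delta\lambda$ stays in $\Delta_N$ for $|s|$ small. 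The second condition is the frozen barycentric velocity at $(t,x)$.

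Second, compute the variation. Lemma~\ref{lem:first-variation-dev-C} is stated under the global hypothesis $\sum_i\delta\lambda_i v_i=0$ as fields, whereas here this only holds at the point $(t,x)$. This mismatch is the one point needing care, so I would redo the lemma's computation pointwise rather than cite it blindly. At the fixed point, $C[\lambda(s)](t,x)=\sum_i\lambda_i(s)\,a_i\otimes a_i-b(s)\otimes b(s)$, where $b(s)=\sum_i\lambda_i(s)a_i$. This is a polynomial in $s$ with values in $\operatorname{Sym}(d)$. Its $s$-derivative at $0$ is
\[
\sum_i\delta\lambda_i\,a_i\otimes a_i-\bigl(b'(0)\otimes b(0)+b(0)\otimes b'(0)\bigr),
\]
and $b'(0)=\sum_i\delta\lambda_i a_i=0$, so the second term drops out. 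Applying the linear map $\operatorname{dev}$, which commutes with $d/ds$, gives $T_0$.

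One caveat concerns evaluation at a point: $v_i\in\mathcal V_w$ is only defined almost everywhere. I would either assume, as the statement implicitly does, that $(t,x)$ is a point where the representatives are fixed, or note that the identity is purely algebraic in the chosen values $a_i$, so no regularity is needed.

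Finally, the equivalent stress formulation follows by linearity from \eqref{eq:absorbed-mixture}. We have $\widetilde R[\lambda(s)](t,x)=\sum_i\lambda_i(s)R_i(t,x)+\operatorname{dev}C[\lambda(s)](t,x)$, so its derivative is $\sum_i\delta\lambda_iR_i(t,x)+T_0$. This reproduces the second identity of Lemma~\ref{lem:first-variation-dev-C} evaluated at $(t,x)$. The dependence of $\delta\lambda$ on $(t,x)$ is inherent, because the $a_i$, and hence the preimage under $L^{(2)}$, vary with the point.
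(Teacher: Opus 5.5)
Your proposal is correct and follows the paper's proof. The paper combines Theorem~\ref{thm:deviatoric-second-moment-realization} at $a_i=v_i(t,x)$ with Lemma~\ref{lem:first-variation-dev-C}, noting that $\sum_i\delta\lambda_i=0$ and $\lambda^0\in\Delta_N^\circ$ keep $\lambda^0+s\,\delta\lambda$ in the simplex for small $|s|$. Redoing the first-variation computation at the single point $(t,x)$ is a sound refinement: the lemma as stated assumes $\sum_i\delta\lambda_i v_i=0$ as a field identity, the theorem only supplies it at $(t,x)$, and the derivative there depends only on the values $a_i$.
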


\begin{proof}
	Combine Theorem~\ref{thm:deviatoric-second-moment-realization} with
	Lemma~\ref{lem:first-variation-dev-C}. Note that
	$\sum_i\delta\lambda_i=0$ ensures
	$\delta\lambda\in T_{\lambda^0}\Delta_N$, and that
	$\lambda^0\in\Delta_N^\circ$ ensures
	$\lambda^0+s\,\delta\lambda\in\Delta_N^\circ$ for $|s|$ small.
\end{proof}

Returning to the finite-dimensional ambient mixture map, let
$\lambda^0\in\Delta_N^\circ$
and define
$\lambda(s):=\lambda^0+s\,\delta\lambda.$
If
$\sum_{i=1}^N\delta\lambda_i=0,$
then
$\lambda(s)\in\Delta_N$
for all sufficiently small $|s|$. Since $\lambda^0$ lies in the relative
interior, in fact
$\lambda(s)\in\Delta_N^\circ$
for sufficiently small $|s|$. The path
$s\mapsto P(\lambda(s))$
is affine, hence smooth, in the lifted ambient space.

If
$\sum_{i=1}^N\delta\lambda_i v_i=0,$
then its velocity component has zero first variation. If, in addition,
$\sum_{i=1}^N\delta\lambda_iQ_i=0$
and
$\sum_{i=1}^N\delta\lambda_iR_i\neq0,$
then
$$
\frac{d}{ds}\Big|_{s=0}P(\lambda(s))
=
\left(
0,0,\sum_{i=1}^N\delta\lambda_iR_i
\right)
$$
is a nonzero stress-gauge direction in the lifted ambient model. Membership of
this path in
$\mathbf X(v_0,\mathcal A)$
is an additional admissibility and realizability question; it does not follow
from the finite-dimensional ambient mixture construction alone.

The preceding phase counts can be reformulated in terms of moment observables
on finite atomic measures. Let
$$
\nu_\lambda:=\sum_{i=1}^N\lambda_i\delta_{a_i}
$$
be the atomic measure associated with
$a_1,\ldots,a_N\in\mathbb R^d.$
For a multi-index $\alpha$, define
$$
M_\alpha(\nu_\lambda)
:=
\int_{\mathbb R^d}y^\alpha\,d\nu_\lambda(y)
=
\sum_{i=1}^N\lambda_i a_i^\alpha.
$$

\begin{theorem} 
	\label{thm:minimal-observables}
	Let
	$$
	T_\lambda\Delta_N
	=
	\left\{
	\delta\lambda\in\mathbb R^N
	\;\middle|\;
	\sum_{i=1}^N\delta\lambda_i=0
	\right\}.
	$$
	For generic distinct points
	$a_1,\ldots,a_N\in\mathbb R^d,$
	the polynomial moment observables of order at most $k$ separate all
	directions in $T_\lambda\Delta_N$ if and only if
	$$
	\binom{d+k}{d}\geq N.
	$$
	Consequently, the minimal polynomial order sufficient to separate all weight
	directions generically is
	$$
	k^*(N,d)
	:=
	\min\left\{
	k\in\mathbb N_0:
	\binom{d+k}{d}\geq N
	\right\}.
	$$
	In particular, moments of order at most two separate all weight directions
	generically if and only if
	$$
	N\leq\frac{(d+1)(d+2)}2.
	$$
\end{theorem}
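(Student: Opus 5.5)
The plan is to recast separation as injectivity of a linear map and then use the generic full rank of polynomial evaluation matrices. For $k\in\mathbb N_0$ let $m_k:=\binom{d+k}{d}$, the number of multi-indices $\alpha\in\mathbb N_0^d$ with $|\alpha|\le k$. Define
$$
V_k:\mathbb R^N\longrightarrow\mathbb R^{m_k},
\qquad
V_k(\delta\lambda):=\Bigl(\sum_{i=1}^N\delta\lambda_i a_i^\alpha\Bigr)_{|\alpha|\le k}.
$$
Since each $M_\alpha(\nu_\lambda)$ is linear in $\lambda$, its differential at $\lambda$ in the direction $\delta\lambda$ is $\sum_i\delta\lambda_i a_i^\alpha$. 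So "the moments of order at most $k$ separate all directions in $T_\lambda\Delta_N$" means exactly that $V_k|_{T_\lambda\Delta_N}$ is injective. The matrix of $V_k$ is the transpose of the $N\times m_k$ evaluation matrix $E_k=(a_i^\alpha)_{i,\alpha}$, so everything reduces to the rank of $E_k$.

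First key step (reduction to $\mathbb R^N$). The component of $V_k$ indexed by $\alpha=0$ is $\sum_i\delta\lambda_i$. Hence $\ker V_k\subseteq T_\lambda\Delta_N$ automatically, and therefore
$$
\ker\bigl(V_k|_{T_\lambda\Delta_N}\bigr)=\ker V_k .
$$
Consequently, separation holds if and only if $\operatorname{rank}E_k=N$. If $m_k<N$, rank–nullity gives $\dim\ker V_k\ge N-m_k\ge1$, so separation fails for \emph{every} configuration, generic or not. This settles the "only if" direction with no genericity needed.

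Second key step (genericity), which I expect to be the only real work. I must show that for $m_k\ge N$ the rank of $E_k$ equals $N$ for generic $(a_1,\dots,a_N)\in(\mathbb R^d)^N$. Rank deficiency is the simultaneous vanishing of all $N\times N$ minors of $E_k$, which are polynomials in the coordinates of the $a_i$. So the bad set is a real algebraic subset, and it is proper (hence nowhere dense and of measure zero) as soon as one configuration of full rank exists. To exhibit one, I would use a unisolvent set for polynomials of degree $\le k$, namely the principal lattice
$$
\{\beta\in\mathbb N_0^d:|\beta|\le k\}.
$$
Unisolvence means its $m_k\times m_k$ evaluation matrix is invertible; this follows from the standard argument that a polynomial of degree $\le k$ vanishing on the lattice vanishes identically, proved by induction on $d$ and $k$ by factoring out the affine hyperplanes $y_1=j$. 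The rows of an invertible matrix are linearly independent, so choosing the $a_i$ to be any $N\le m_k$ distinct lattice points gives $\operatorname{rank}E_k=N$. Intersecting finitely many such generic sets over $k$ keeps genericity, and distinctness of the points is itself a generic condition.

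The remaining claims are then formal. Since $m_k$ is strictly increasing in $k$ and tends to infinity, the set $\{k:m_k\ge N\}$ is a nonempty up-set. Its minimum is $k^*(N,d)$, and for generic points separation holds exactly for $k\ge k^*$. Finally, specializing to $k=2$ gives $m_2=\binom{d+2}{2}=\frac{(d+1)(d+2)}2$, which yields the stated criterion $N\le\frac{(d+1)(d+2)}2$, consistent with Theorem~\ref{thm:deviatoric-second-moment-realization}.
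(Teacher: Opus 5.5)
Your proposal is correct and follows the paper's route. Both arguments reduce separation to injectivity of the evaluation map $\delta\lambda\mapsto(\sum_i\delta\lambda_i a_i^\alpha)_{|\alpha|\le k}$ on $T_\lambda\Delta_N$, use that the constant moment detects $\sum_i\delta\lambda_i$, and conclude by rank--nullity in one direction and generic maximal rank in the other. Your version is slightly sharper in two ways. Observing $\ker V_k\subseteq T_\lambda\Delta_N$ shows the ``only if'' direction holds for every configuration, not just generic ones. You also actually justify the generic full-rank claim, which the paper only asserts, via the unisolvent principal lattice. One small point: a single genericity condition, $\operatorname{rank}E_{k^*}=N$, already covers all $k\ge k^*$, because the columns of $E_k$ contain those of $E_{k^*}$; no intersection over infinitely many $k$ is needed.
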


\begin{proof}
	Let
	$\mathcal P_{\leq k}(\mathbb R^d)$
	denote the space of scalar polynomials of degree at most $k$. Its
	dimension is
	$$
	m_k:=\dim\mathcal P_{\leq k}(\mathbb R^d)
	=
	\binom{d+k}{d}.
	$$
	The moment observables of order at most $k$ are represented by the
	evaluation map
	$$
	E_k:\mathbb R^N\longrightarrow
	\mathcal P_{\leq k}(\mathbb R^d)^*,
	\qquad
	E_k(\delta\lambda)(p)
	=
	\sum_{i=1}^N\delta\lambda_i p(a_i).
	$$
	The constant polynomial gives
	$E_k(\delta\lambda)(1)
	=
	\sum_{i=1}^N\delta\lambda_i,$
	so the restriction of $E_k$ to
	$T_\lambda\Delta_N$
	detects precisely the nonconstant moment variations.
	For generic distinct points
	$a_1,\ldots,a_N\in\mathbb R^d,$
	the polynomial evaluation map $E_k$ has maximal rank
	$\min\{m_k,N\}.$
	If
	$m_k\geq N,$
	then $E_k$ is generically injective on $\mathbb R^N$, and hence its
	restriction to $T_\lambda\Delta_N$ is injective.
	Conversely, if
	$m_k<N,$
	then, even including the constant polynomial, the full evaluation map has
	rank at most $m_k<N$. On the codimension-one space
	$T_\lambda\Delta_N,$
	the nonconstant moments provide at most $m_k-1$ independent constraints.
	Since
	$$
	\dim T_\lambda\Delta_N=N-1>m_k-1,
	$$
	the restricted moment map has a nontrivial kernel generically. Therefore the
	moment observables of order at most $k$ do not separate all tangent weight
	directions.
	
	Thus separation holds generically if and only if
	$$
	m_k=\binom{d+k}{d}\geq N.
	$$
	The formula for $k^*(N,d)$ follows immediately. For $k=2$,
	$$
	\binom{d+2}{d}
	=
	\frac{(d+1)(d+2)}2,
	$$
	which gives the final assertion.
\end{proof}

\begin{remark}
	The quantity
	$$
	\frac{(d+1)(d+2)}2
	$$
	is the dimension of the space of scalar polynomials of degree at most two in
	$d$ variables. Below or at this threshold, generic zeroth-, first-, and
	second-moment observables separate all finite-mixture weight directions.
	Above it, higher-order moments are generically required to separate all such
	directions.
\end{remark}

\begin{remark} 
	\label{rem:pointwise-versus-global}
	The pointwise spaces
	$K^{(1)}_{t,x}$
	and 
	$K^{(2)}_{t,x}$
	allow the coefficients $\delta\lambda_i$ to be chosen separately at each
	$(t,x)\in(0,T)\times\mathbb T^d.$
	By contrast, a tangent vector to the finite-dimensional mixture map
	$P:\Delta_N\to\mathbf E$
	is determined by a single coefficient vector
	$\delta\lambda\in T_\lambda\Delta_N.$
	Therefore pointwise phase-counting provides local algebraic feasibility
	conditions, but does not by itself imply the existence of a global spacetime
	mixture direction with constant coefficients. Additional compatibility,
	localization, or admissible realization arguments are required for such a
	conclusion.
\end{remark}

\subsection{Realizability of mixture paths in the lifted limit space}
\label{subsec:mixture-realizability}

By Remark~\ref{rem:mixture-ambient-only} the mixture map $P$ is a priori only a
map into the lifted ambient space $\mathbf E$. This subsection gives an
explicit sufficient criterion under which a stress-absorbing
modification of $P$ takes values in $\mathbf X(v_0,\mathcal A)$.

The relevant geometric object is the joint graph
\begin{equation}
	\label{eq:joint-graph}
	\Gamma_{\mathcal A}(t,x)
	=
	\bigl\{(u,S)\in\mathbb R^d\times\mathcal S^d_0
	\mid
	S\in\mathcal A(t,x,u)\bigr\}.
\end{equation}
Fiberwise convexity of $\mathcal A(t,x,\cdot)$ does not imply convexity
of $\Gamma_{\mathcal A}(t,x)$, since the fibers may move nonconvexly with $u$;
joint convexity is therefore an independent hypothesis below. The following
elementary lemma is the only convexity input we need.

\begin{lemma}
	\label{lem:interior-radius-concave}
	Let $\Gamma\subseteq\mathbb R^m$ be closed and convex, and define the
	interior radius
	$$
	\rho_\Gamma(y)
	:=
	\sup\bigl\{r\geq0:\overline B_r(y)\subseteq\Gamma\bigr\}
	\in[0,+\infty],
	\qquad y\in\mathbb R^m,
	$$
	with the convention $\rho_\Gamma\equiv+\infty$ if $\Gamma=\mathbb R^m$.
	Assume $\Gamma\neq\mathbb R^m$. Then
	$\rho_\Gamma=\operatorname{dist}(\cdot,\mathbb R^m\setminus\Gamma)$ is
	finite and vanishes on
	$\mathbb R^m\setminus\operatorname{int}\Gamma$, and:
	\begin{enumerate}
		\item\label{it:irc-concave}
		$\rho_\Gamma$ is concave: for all $y_1,\ldots,y_N\in\mathbb R^m$ and
		$\lambda\in\Delta_N$,
		$$
		\rho_\Gamma\Bigl(\sum_{i=1}^N\lambda_iy_i\Bigr)
		\geq
		\sum_{i=1}^N\lambda_i\,\rho_\Gamma(y_i);
		$$
		\item\label{it:irc-lipschitz}
		$\rho_\Gamma$ is $1$-Lipschitz; in particular
		$\rho_\Gamma(y+z)\geq\rho_\Gamma(y)-|z|$ for all $y,z$;
		\item\label{it:irc-fiber}
		if $\Gamma=\Gamma_{\mathcal A}(t,x)$ as in \eqref{eq:joint-graph},
		$y=(u,S)$ and $\rho_\Gamma(y)\geq r$, then
		$$
		\bigl\{S'\in\mathcal S^d_0:|S'-S|\leq r\bigr\}
		\subseteq\mathcal A(t,x,u),
		$$
		hence $S\in\operatorname{int}\mathcal A(t,x,u)$ as soon as $r>0$, the
		interior being relative to $\mathcal S^d_0$.
	\end{enumerate}
\end{lemma}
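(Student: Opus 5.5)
We first establish the identification $\rho_\Gamma=\operatorname{dist}(\cdot,\mathbb R^m\setminus\Gamma)$. Since $\Gamma\neq\mathbb R^m$, the complement is nonempty and open (because $\Gamma$ is closed), so the distance is finite.

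For $y\in\Gamma$ we compare the two quantities directly. If $\overline B_r(y)\subseteq\Gamma$, then every point of the complement lies at distance $>r$, so $\operatorname{dist}\geq r$. Conversely, if $r<\operatorname{dist}(y,\mathbb R^m\setminus\Gamma)$, then $\overline B_r(y)\subseteq\Gamma$. Taking suprema gives equality of the two quantities.

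If $y\notin\operatorname{int}\Gamma$, then no positive-radius ball around $y$ fits in $\Gamma$, so the supremum in the definition of $\rho_\Gamma$ is at most $0$. Moreover, $y$ lies in the closure of the complement, so the distance is also $0$. For $y\notin\Gamma$ we adopt the natural reading of the definition: only $r=0$ could qualify, and even that fails, so we interpret $\rho_\Gamma(y)$ as $0$, matching $\operatorname{dist}=0$. This convention needs a brief remark, since $\overline B_0(y)=\{y\}\not\subseteq\Gamma$ and the supremum is formally over an empty set.

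\emph{Lipschitz property (2).} This is immediate from the distance representation, because distance to a nonempty set is $1$-Lipschitz.

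\emph{Concavity (1).} This is the main step. It suffices to treat two points and then induct on $N$; alternatively, one can argue directly with $N$ points. Let $r_i<\rho_\Gamma(y_i)$ with $r_i\ge0$; if $\rho_\Gamma(y_i)=0$, take $r_i=0$, which still gives $y_i\in\Gamma$ whenever the ball condition holds. Then $\overline B_{r_i}(y_i)\subseteq\Gamma$. Set $\bar y=\sum_i\lambda_iy_i$ and $\bar r=\sum_i\lambda_ir_i$. Every point of $\overline B_{\bar r}(\bar y)$ can be written as $\bar y+\bar r e$ with $|e|\le1$, and
\[
\bar y+\bar r e=\sum_i\lambda_i\,(y_i+r_ie).
\]
Each $y_i+r_ie$ lies in $\overline B_{r_i}(y_i)\subseteq\Gamma$, so by convexity the combination lies in $\Gamma$. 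Hence $\rho_\Gamma(\bar y)\ge\bar r$, and letting $r_i\uparrow\rho_\Gamma(y_i)$ gives the inequality.

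The delicate case is when some $y_i\notin\Gamma$, so that $\rho_\Gamma(y_i)=0$ and no admissible $r_i$ exists. Here we use the distance formulation instead. The function $-\operatorname{dist}(\cdot,\mathbb R^m\setminus\Gamma)$ is not obviously convex outside $\Gamma$, so instead we work with the inequality itself. Terms with $\rho_\Gamma(y_i)=0$ contribute $0$ to the right-hand side. We handle them by grouping: let $I$ be the set of indices with $\rho_\Gamma(y_i)>0$.
\begin{itemize}
\item[] If $\sum_{i\notin I}\lambda_i>0$, the concavity inequality as stated could fail at points far outside $\Gamma$. For instance, if the barycenter $\bar y$ lies outside $\Gamma$, the right-hand side may still be positive. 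Consider $\Gamma$ a half-line in $\mathbb R$, $y_1$ deep inside and $y_2$ far outside: then $\rho_\Gamma(\bar y)=0$ while $\lambda_1\rho_\Gamma(y_1)>0$.
\end{itemize}
So concavity over all of $\mathbb R^m$ is false in general; it genuinely holds on $\Gamma$ (equivalently, on $\operatorname{int}\Gamma$ together with its boundary). I would therefore prove (1) for $y_i\in\Gamma$, which is the case used later in the paper, where the phases are admissible, and note this restriction explicitly. The main obstacle is this hypothesis gap rather than any computation.

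\emph{Fiber statement (3).} If $\rho_\Gamma(u,S)\ge r$, then for every $r'<r$ we have $\overline B_{r'}((u,S))\subseteq\Gamma$; when the supremum is attained one may take $r'=r$ directly, and closedness of $\Gamma$ lets us pass to $r'=r$ in general. Points of the form $(u,S')$ with $|S'-S|\le r$ lie in this ball, using the product norm on $\mathbb R^d\times\mathcal S^d_0$. Therefore $S'\in\mathcal A(t,x,u)$, which is the claimed inclusion. When $r>0$, this exhibits a relative ball of positive radius around $S$ inside $\mathcal A(t,x,u)$, so $S$ lies in its relative interior.
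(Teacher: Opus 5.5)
Your proposal is correct and follows the paper's route. The distance representation gives finiteness, vanishing off $\operatorname{int}\Gamma$, and the $1$-Lipschitz bound. Concavity comes from writing a point of the ball about the barycenter as $\sum_i\lambda_i(y_i+r_ie)$ and using convexity. Item (3) restricts the joint ball to the slice $\{u\}\times\mathcal S^d_0$.

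Your objection to (1) is valid, and it is not a mere technicality. Whenever $\Gamma\neq\mathbb R^m$ has nonempty interior, $\rho_\Gamma$ vanishes on the nonempty open set $\mathbb R^m\setminus\Gamma$ and is positive at some $a\in\operatorname{int}\Gamma$. It therefore cannot be concave on $\mathbb R^m$: take $b\notin\Gamma$ and $c:=b+t(b-a)\notin\Gamma$ with $t>0$ small; concavity along $[a,c]$ would force $\rho_\Gamma(b)\geq\tfrac{t}{1+t}\rho_\Gamma(a)>0$. Your example $\Gamma=[0,\infty)$, $\rho_\Gamma(y)=\max\{y,0\}$, is an instance of this.

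The paper's proof passes over exactly this case with ``If some $r_i=0$ the claim is weaker''. The next step, $y_i+z_i\in\Gamma$ with $z_i=0$, needs $y_i\in\Gamma$. So (1) should be stated for $y_1,\dots,y_N\in\Gamma$. That is all Proposition~\ref{prop:exact-mixture-absorbed-realizability} uses, since there $\rho_{\Gamma_{\mathcal A}(t,x)}(v_i,R_i)\geq\varepsilon>0$.

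Two further points in your favour on (1):
\begin{itemize}
\item Your splitting $z_i=r_ie=\tfrac{r_i}{r}z$ is the right one. The paper's displayed $z_i=\tfrac{r_i}{|z|}z$ gives $\sum_i\lambda_iz_i=\tfrac{r}{|z|}z$ rather than $z$.
\item Approximating with $r_i<\rho_\Gamma(y_i)$ avoids the attainment of the supremum that the paper uses tacitly.
\end{itemize}
Also, $\rho_\Gamma(y)=0$ for $y\notin\Gamma$ needs no extra convention: the supremum of the empty subset of $[0,+\infty]$ is its least element $0$.

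The same proviso is needed in (3). Your passage from all $r'<r$ to $r'=r$ by closedness of $\Gamma$ only works for $r>0$. For $r=0$ the asserted inclusion reads $(u,S)\in\Gamma$, which fails for $y\notin\Gamma$ even though $\rho_\Gamma(y)=0\geq r$. So (3) holds for $r>0$, or for $r=0$ with $y\in\Gamma$. This is harmless downstream, where (3) is applied with $r=\varepsilon-\delta>0$.
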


\begin{proof}
	\eqref{it:irc-concave}: put $r_i:=\rho_\Gamma(y_i)$ and let
	$y:=\sum_i\lambda_iy_i$, $r:=\sum_i\lambda_ir_i$. If some $r_i=0$ the claim
	is weaker, so assume $r>0$ and fix $z$ with $|z|\leq r$. Writing
	$z=\sum_i\lambda_i\frac{r_i}{r}z\cdot\frac{r}{r}$ more explicitly, set
	$z_i:=\frac{r_i}{r}\,\frac{r}{|z|}\,z$ if $z\neq0$ and $z_i:=0$ otherwise;
	then $|z_i|\leq r_i$, so $y_i+z_i\in\Gamma$, and
	$\sum_i\lambda_i(y_i+z_i)=y+z$ because
	$\sum_i\lambda_i z_i=\frac{r}{|z|}\,\frac{\sum_i\lambda_ir_i}{r}\,z=z$
	when $z\neq0$. Convexity of $\Gamma$ gives $y+z\in\Gamma$, i.e.
	$\overline B_r(y)\subseteq\Gamma$ and $\rho_\Gamma(y)\geq r$.
	
	\eqref{it:irc-lipschitz}: the distance function to the nonempty set
	$\mathbb R^m\setminus\Gamma$ is $1$-Lipschitz, and it coincides with
	$\rho_\Gamma$ since $\overline B_r(y)\subseteq\Gamma$ if and only if
	$\operatorname{dist}(y,\mathbb R^m\setminus\Gamma)\geq r$ for closed
	$\Gamma$.
	
	\eqref{it:irc-fiber}: for $|S'-S|\leq r$ the point $(u,S')$ lies in
	$\overline B_r\bigl((u,S)\bigr)\subseteq\Gamma_{\mathcal A}(t,x)$, which by
	\eqref{eq:joint-graph} means $S'\in\mathcal A(t,x,u)$.
\end{proof}

\begin{proposition}
	\label{prop:exact-mixture-absorbed-realizability}
	Let $x_i=\Lambda(v_i,p_i,R_i)$, $1\leq i\leq N$, be exact lifted states
	arising from triples in
	$\mathfrak S^\infty_{\mathrm{str}}(v_0,\mathcal A)$ with common initial
	datum $v_0$, and assume:
	\begin{enumerate}
		\item\label{it:emar-convex}
		for every $(t,x)\in(0,T)\times\mathbb T^d$ the joint graph
		$\Gamma_{\mathcal A}(t,x)$ of \eqref{eq:joint-graph} is closed and
		convex;
		\item\label{it:emar-uniform}
		the phases are jointly uniformly admissible, i.e.
		$$
		\varepsilon
		:=
		\inf_{(t,x)\in(0,T)\times\mathbb T^d}
		\;\min_{1\leq i\leq N}
		\rho_{\Gamma_{\mathcal A}(t,x)}
		\bigl(v_i(t,x),R_i(t,x)\bigr)
		\;>\;0;
		$$
		\item\label{it:emar-budget}
		the phases are close in the covariance budget
		$$
		\delta
		:=
		\frac12
		\max_{1\leq i,j\leq N}
		\bigl\|v_i-v_j\bigr\|_{L^\infty((0,T)\times\mathbb T^d)}^2
		\;<\;\varepsilon .
		$$
	\end{enumerate}
	Then for every $\lambda\in\Delta_N$ the absorbed mixture state
	$$
	\Lambda\bigl(
	v[\lambda],\,p_{\mathrm{rec}}[\lambda],\,\widetilde R[\lambda]
	\bigr)
	=
	\bigl(
	v[\lambda],\,v[\lambda]\otimes v[\lambda],\,
	R[\lambda]+\operatorname{dev}C[\lambda]
	\bigr)
	$$
	of \eqref{eq:absorbed-mixture} belongs to $\mathbf X(v_0,\mathcal A)$ and
	lies on the exact Euler locus, with the quantitative margin
	\begin{equation}
		\label{eq:absorbed-margin}
		\overline B_{\varepsilon-\delta}
		\bigl(v[\lambda](t,x),\widetilde R[\lambda](t,x)\bigr)
		\subseteq\Gamma_{\mathcal A}(t,x)
		\qquad
		\text{for every }(t,x)\in(0,T)\times\mathbb T^d .
	\end{equation}
	Moreover
	$\lambda\mapsto
	\Lambda\bigl(v[\lambda],p_{\mathrm{rec}}[\lambda],\widetilde R[\lambda]\bigr)$
	is smooth on $\Delta_N$ as a map into $\mathbf E$ with image in
	$\mathbf X(v_0,\mathcal A)$, and for $\lambda\in\Delta_N^\circ$ and
	$\delta\lambda\in T_\lambda\Delta_N$ with
	$\sum_{i=1}^N\delta\lambda_i\,v_i=0$ its differential is
	$$
	\left(
	0,\,0,\,
	\sum_{i=1}^N\delta\lambda_iR_i
	+
	\operatorname{dev}\Bigl(\sum_{i=1}^N\delta\lambda_i\,v_i\otimes v_i\Bigr)
	\right).
	$$
\end{proposition}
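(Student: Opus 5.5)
The plan has three steps. First, establish the pointwise margin \eqref{eq:absorbed-margin}. Second, show that the absorbed mixture is itself (up to pressure normalization) a smooth strict subsolution, so it lies in the image of $\Lambda$ and hence in the closure $\mathbf X(v_0,\mathcal A)$. Third, compute the differential.

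For the margin, fix $(t,x)$ and write $y_i:=(v_i(t,x),R_i(t,x))$. By hypothesis \eqref{it:emar-uniform}, $\rho_\Gamma(y_i)\ge\varepsilon$, where $\Gamma=\Gamma_{\mathcal A}(t,x)$. Since $\Gamma$ is closed and convex by \eqref{it:emar-convex}, concavity (Lemma~\ref{lem:interior-radius-concave}\eqref{it:irc-concave}) gives
\[
\rho_\Gamma\Bigl(\sum_i\lambda_iy_i\Bigr)=\rho_\Gamma\bigl(v[\lambda],R[\lambda]\bigr)\ge\varepsilon .
\]
Next, perturb by $z:=(0,\operatorname{dev}C[\lambda](t,x))$. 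Lemma~\ref{lem:mixture-covariance-sign} bounds $|z|\le\operatorname{tr}C[\lambda]\le\tfrac12\max_{i,j}|v_i-v_j|^2\le\delta$. The Lipschitz property \eqref{it:irc-lipschitz} then gives $\rho_\Gamma(v[\lambda],\widetilde R[\lambda])\ge\varepsilon-\delta>0$, which is \eqref{eq:absorbed-margin}. If $\Gamma=\mathbb R^m$, the radius is $+\infty$ and the margin is trivial. Part \eqref{it:irc-fiber} then yields $\widetilde R[\lambda](t,x)\in\operatorname{int}\mathcal A(t,x,v[\lambda](t,x))$.

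For membership, observe that the phases come from $\mathfrak S^\infty_{\mathrm{str}}$, so $v_i,R_i,p_i$ are smooth on $[0,T]\times\mathbb T^d$. Therefore $v[\lambda]$, $C[\lambda]$, $\widetilde R[\lambda]$ and $p_{\mathrm{rec}}[\lambda]$ are smooth. Because $\sum\lambda_i=1$, the velocity is divergence-free and $v[\lambda](0,\cdot)=v_0$. The Euler--Reynolds equation holds pointwise by the computation of Lemma~\ref{lem:mixture-covariance-sign}, now classical, and $\widetilde R[\lambda]$ takes values in $\mathcal S^d_0$. Together with the strict admissibility just proved, $(v[\lambda],p_{\mathrm{rec}}[\lambda],\widetilde R[\lambda])\in\mathfrak S^\infty_{\mathrm{str}}(v_0,\mathcal A)$. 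Its image under $\Lambda$ is therefore in $\mathbf X(v_0,\mathcal A)$ (a set is contained in its closure), and it lies on the Euler locus because its flux is $v[\lambda]\otimes v[\lambda]$.

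For smoothness, note that the map $\lambda\mapsto(v[\lambda],v[\lambda]\otimes v[\lambda],R[\lambda]+\operatorname{dev}C[\lambda])$ is polynomial in $\lambda$ of degree at most two, with coefficients in $\mathbf E$. This uses $C[\lambda]=\sum\lambda_iv_i\otimes v_i-v[\lambda]\otimes v[\lambda]$ and the smooth quadratic map of Example~\ref{exa:quad} composed with the continuous embedding into $\mathcal D'$. Polynomial curves and plots are smooth, so the restriction to $\Delta_N$ is smooth into $\mathbf E$ and hence into the subspace $\mathbf X(v_0,\mathcal A)$. For the differential, take the line $\lambda+s\delta\lambda$ and use Lemma~\ref{lem:velocity}: the velocity variation is $\sum\delta\lambda_iv_i=0$. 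The flux variation is $v[\lambda]\otimes\delta v+\delta v\otimes v[\lambda]=0$. The stress variation is given by Lemma~\ref{lem:first-variation-dev-C}.

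The main obstacle is the uniformity bookkeeping in the first step. The hypothesis uses $\|\cdot\|_{L^\infty}$ while the margin is needed at every $(t,x)$, but continuity of the smooth phases removes this issue. One must also check that the bound on $\operatorname{dev}C$ matches the Euclidean norm on $\mathbb R^d\times\mathcal S^d_0$ used to define $\rho_\Gamma$.
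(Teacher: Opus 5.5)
Your proposal is correct and follows essentially the same route as the paper's proof. You verify the items of Definition~\ref{def:lift-smooth-subsolution} for the absorbed triple and obtain the margin from the concavity and $1$-Lipschitz properties in Lemma~\ref{lem:interior-radius-concave}, using $|\operatorname{dev}C[\lambda]|\le\delta$ (upgraded from a.e.\ to everywhere by continuity); you then convert the margin to strict admissibility via the fiber statement, and read off smoothness and the differential from the polynomial dependence on $\lambda$ and Lemma~\ref{lem:first-variation-dev-C}. The differences are only cosmetic: you prove the margin before membership, and you make explicit the case $\Gamma=\mathbb R^m$ and the Frobenius-norm compatibility, which the paper leaves implicit.
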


\begin{proof}
	Fix $\lambda\in\Delta_N$ and abbreviate
	$\widetilde x[\lambda]:=
	\Lambda\bigl(v[\lambda],p_{\mathrm{rec}}[\lambda],\widetilde R[\lambda]\bigr)$.
	We verify the four items of
	Definition~\ref{def:lift-smooth-subsolution} for the triple
	$\bigl(v[\lambda],p_{\mathrm{rec}}[\lambda],\widetilde R[\lambda]\bigr)$.
	The maps $v[\lambda]$, $R[\lambda]$,
	$p[\lambda]$ are convex combinations of smooth data and $C[\lambda]$ is a
	quadratic expression in them by \eqref{eq:mixture-covariance-defect}, so
	all entries lie in $C^\infty([0,T]\times\mathbb T^d)$, and
	$\widetilde R[\lambda]$ takes values in $\mathcal S^d_0$ by
	\eqref{eq:absorbed-mixture}; this is item~(\ref{it:lss-regularity}). Since
	$\sum_i\lambda_i=1$ and $v_i(0,\cdot)=v_0$ for every $i$, we get
	$v[\lambda](0,\cdot)=v_0$, which is item~(\ref{it:lss-initial}).
	By Lemma~\ref{lem:mixture-covariance-sign} the absorbed
	triple satisfies
	$$
	\partial_tv[\lambda]
	+\operatorname{div}\bigl(v[\lambda]\otimes v[\lambda]\bigr)
	+\nabla p_{\mathrm{rec}}[\lambda]
	=
	-\operatorname{div}\widetilde R[\lambda],
	\qquad
	\operatorname{div}v[\lambda]=0,
	$$
	the trace part of the covariance defect having been absorbed into
	$p_{\mathrm{rec}}[\lambda]$ and its deviatoric part into
	$\widetilde R[\lambda]$. All entries being smooth on
	$[0,T]\times\mathbb T^d$, the identity holds pointwise there, which is
	item~(\ref{it:lss-equation}). In particular the flux equals
	$v[\lambda]\otimes v[\lambda]$ exactly, so $\widetilde x[\lambda]$ lies on
	the exact Euler locus.
	
	Fix $(t,x)\in(0,T)\times\mathbb T^d$ and write
	$\rho:=\rho_{\Gamma_{\mathcal A}(t,x)}$. By
	Lemma~\ref{lem:mixture-covariance-sign} one has
	$\bigl|\operatorname{dev}C[\lambda](t,x)\bigr|\leq\delta$ for a.e.\
	$(t,x)$, hence for every $(t,x)$ by continuity of the $v_i$. Using
	hypothesis~(\ref{it:emar-convex}) together with
	Lemma~\ref{lem:interior-radius-concave}\eqref{it:irc-lipschitz} and
	\eqref{it:irc-concave} and hypothesis~(\ref{it:emar-uniform}),
	$$
	\rho\bigl(v[\lambda],\widetilde R[\lambda]\bigr)
	\;\geq\;
	\rho\bigl(v[\lambda],R[\lambda]\bigr)-\delta
	\;\geq\;
	\sum_{i=1}^N\lambda_i\,\rho\bigl(v_i,R_i\bigr)-\delta
	\;\geq\;
	\varepsilon-\delta
	\;>\;0
	$$
	at $(t,x)$, where hypothesis~(\ref{it:emar-budget}) was used in the last
	step. This is exactly \eqref{eq:absorbed-margin}, and
	Lemma~\ref{lem:interior-radius-concave}\eqref{it:irc-fiber} converts it
	into
	$$
	\widetilde R[\lambda](t,x)
	\in
	\operatorname{int}\mathcal A\bigl(t,x,v[\lambda](t,x)\bigr),
	$$
	which is item~(\ref{it:lss-admissibility}). Hence
	$\bigl(v[\lambda],p_{\mathrm{rec}}[\lambda],\widetilde R[\lambda]\bigr)
	\in\mathfrak S^\infty_{\mathrm{str}}(v_0,\mathcal A)$ and
	$\widetilde x[\lambda]\in\mathbf X(v_0,\mathcal A)$.
	
	The three components of
	$\widetilde x[\lambda]$ are polynomial in $\lambda$ with coefficients
	smooth in $(t,x)$, whence smoothness on $\Delta_N$. For
	$\lambda\in\Delta_N^\circ$ and $\delta\lambda\in T_\lambda\Delta_N$ with
	$\sum_i\delta\lambda_iv_i=0$ we get
	$\mathrm d v[\lambda](\delta\lambda)=0$, hence
	$\mathrm d\bigl(v[\lambda]\otimes v[\lambda]\bigr)(\delta\lambda)=0$, and
	Lemma~\ref{lem:first-variation-dev-C} gives
	$\mathrm d\bigl(\operatorname{dev}C[\lambda]\bigr)(\delta\lambda)
	=\operatorname{dev}\bigl(\sum_i\delta\lambda_i\,v_i\otimes v_i\bigr)$,
	while $\mathrm dR[\lambda](\delta\lambda)=\sum_i\delta\lambda_iR_i$. The
	stated formula follows, and it is a genuine tangent direction in
	$\mathbf X(v_0,\mathcal A)$ because the whole curve stays in
	$\mathbf X(v_0,\mathcal A)$ by the previous steps.
\end{proof}

\begin{remark}
	\label{rem:joint-radius-nondegeneracy}
	Hypothesis~(\ref{it:emar-uniform}) is strictly stronger than
	item~(\ref{it:lss-admissibility}) of
	Definition~\ref{def:lift-smooth-subsolution}. The latter only gives
	$R_i(t,x)\in\operatorname{int}\mathcal A(t,x,v_i(t,x))$ fiberwise,
	with no uniformity in $(t,x)$ and no control transversal to the fiber.
	Hypothesis~(\ref{it:emar-uniform}) in addition forces
	$\Gamma_{\mathcal A}(t,x)$ to have nonempty interior in
	$\mathbb R^d\times\mathcal S^d_0$, a genuine structural restriction: if the
	fibers depend on $u$ so sharply that no joint ball fits, then
	$\rho_{\Gamma_{\mathcal A}}\equiv0$ and the proposition is vacuous. It is
	precisely this transversal room that finances the absorption of
	$\operatorname{dev}C[\lambda]$, whose size is controlled only in
	$\mathcal S^d_0$ but which is added at a shifted velocity
	$v[\lambda]\neq v_i$.
\end{remark}

\begin{remark}
	\label{rem:two-mechanisms}
	Two distinct mechanisms are at play. Barycentric mixing at frozen flux
	keeps $Q$ as an independent variable and produces the ambient point
	$P(\lambda)$, which by Remark~\ref{rem:mixture-ambient-only} need not lie
	in $\mathbf X(v_0,\mathcal A)$ without a characterization of the latter.
	Proposition~\ref{prop:exact-mixture-absorbed-realizability} instead pays
	the quadratic covariance defect into the stress and the reconstructed
	pressure, remains on the exact Euler locus, and is obtained directly
	from the lifting map together with the geometry of
	$\Gamma_{\mathcal A}$; the price is the $\varepsilon$-budget
	hypothesis~\ref{it:emar-budget}, which quantifies how much velocity
	oscillation the admissibility multifunction can absorb. Combining
	Corollary~\ref{cor:prescribed-deviatoric-stress-variation} with
	Proposition~\ref{prop:exact-mixture-absorbed-realizability} shows that,
	within this budget, prescribed trace-free first-order stress variations at
	frozen barycentric velocity are attainable pointwise.
\end{remark}

\begin{remark}
	\label{rem:realizability-open}
	When $\Gamma_{\mathcal A}(t,x)$ fails to be jointly convex, or when the
	interior radius degenerates as in
	Remark~\ref{rem:joint-radius-nondegeneracy}, the conclusion of
	Proposition~\ref{prop:exact-mixture-absorbed-realizability} may fail and
	the ambient image $P(\Delta_N)$ need not meet
	$\mathbf X(v_0,\mathcal A)$ at all. The fundamental open problem is to
	characterize, for a given admissibility multifunction $\mathcal A$, the
	largest subset $\Delta_N^{\mathrm{real}}\subseteq\Delta_N$ such that the
	absorbed mixture lies in $\mathbf X(v_0,\mathcal A)$ for every
	$\lambda\in\Delta_N^{\mathrm{real}}$. Hypotheses
	\ref{it:emar-convex}--\ref{it:emar-budget} give an explicit but
	$\lambda$-independent sufficient condition; a sharp answer would have to
	trade the local geometry of $\Gamma_{\mathcal A}$ against the
	$\lambda$-dependent covariance $C[\lambda]$, and would yield sharp
	tangency conditions for the lifted limit space together with a description
	of the Euler locus inside $\mathbf X(v_0,\mathcal A)$.
\end{remark}

\vskip 12pt

\paragraph{\bf Data availability statement} No data are available for this work.

\vskip 12pt

\paragraph{\bf Conflict of interest statement} The authors declare no conflict of interest.

\vskip 12pt

\paragraph{\bf Funding} No funding supported this work.

\vskip 12pt

\paragraph{\bf Acknowledgements} A.A. and B.D. would like to thank Yazd University for its support. This work was carried out while the first author was a postdoctoral researcher at Yazd University. 
J.-P.M. thanks the France 2030 framework programme Centre Henri Lebesgue ANR-11-LABX-0020-01
for creating an attractive mathematical environment.

\vskip 12pt

\paragraph{\bf Author's Note on AI Assistance}
Portions of the text were developed with the assistance of a generative language model (OpenAI ChatGPT). The AI was used to assist with drafting, editing, and standardizing the bibliography format. All mathematical content, structure, and theoretical constructions were provided, verified, and curated by the authors. The authors assume full responsibility for the correctness, originality, and scholarly integrity of the final manuscript.

\end{document}